\newtheorem{theorem}{Theorem}[section]
\newtheorem{lemma}[theorem]{Lemma}
\newtheorem{corollary}[theorem]{Corollary}
\newtheorem{sublemma}{}[theorem]
\theoremstyle{definition}
\theoremstyle{remark}
\numberwithin{equation}{section}
\DeclareMathOperator{\cl}{cl}
\DeclareMathOperator{\si}{si}
\DeclareMathOperator{\co}{co}
\begin{document}

\title[Matroids With Minimum Cocircuit Size four]{Unavoidable Minors of Matroids with Minimum Cocircuit Size four}

\author{Matthew Mizell}
\address{Mathematics Department \\
Louisiana State University \\ 
Baton Rouge, Louisiana}
\email{mmizel4@lsu.edu}

\author{James Oxley}
\address{Mathematics Department \\
Louisiana State University \\ Baton Rouge, Louisiana}
\email{oxley@math.lsu.edu}

\subjclass{05B35, 05C83}
\keywords{minimum cocircuit size, unavoidable minor, minimum vertex degree}
\date{\today}

\begin{abstract}
In 1963, Halin and Jung proved that every simple graph with minimum degree at least four has $K_5$ or $K_{2,2,2}$ as a minor. Mills and Turner proved an analog of this theorem by showing that every $3$-connected binary matroid in which every cocircuit has size at least four has $F_7, M^*(K_{3,3}), M(K_5),$ or $ M(K_{2,2,2})$  as a minor. Generalizing these results, this paper proves that every simple matroid in which all cocircuits have at least four elements has as a minor one of nine matroids, seven of which are well known. All nine of these special matroids have rank at most five and have at most twelve elements.
\end{abstract}
\maketitle
\section{Introduction}

The purpose of this paper is to prove a matroid analog of the following result of Halin and Jung [\ref{HJ}]. 
 \begin{theorem}\label{mainTheorem:graphic}
    Every simple graph with minimum degree at least four has $K_5$ or $K_{2,2,2}$ as a minor. 
\end{theorem}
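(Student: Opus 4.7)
The plan is to take a counterexample $G$ minimizing $|V(G)|+|E(G)|$ and derive a contradiction. Since the only simple graph on at most five vertices with minimum degree at least $4$ is $K_5$ itself, we may assume $|V(G)|\ge 6$. By minimality, two reductions must both be impossible: we cannot delete an edge $e$ of $G$ and keep minimum degree at least $4$, and we cannot contract an edge (with subsequent suppression of any resulting loops and parallel edges) and keep minimum degree at least $4$. Both reductions would produce strictly smaller simple graphs of minimum degree at least $4$, which by minimality would contain a $K_5$ or $K_{2,2,2}$ minor that would lift to $G$.

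The failure of the deletion move means every edge has an endpoint of degree exactly $4$, so the set of vertices of degree at least $5$ is independent. For the contraction move applied to an edge $uv$, the identified vertex has degree $\deg(u)+\deg(v)-2-|N(u)\cap N(v)|$, and each common neighbour of $u$ and $v$ loses one unit of degree after simplification. Thus failure of contraction means either $\deg(u)+\deg(v)-|N(u)\cap N(v)|\le 5$ or some common neighbour of $u,v$ has degree exactly $4$. A short case analysis on the degrees of $u$ and $v$ then yields: for every edge between two degree-$4$ vertices, either they share at least three common neighbours or they share a degree-$4$ common neighbour; and for every edge from a degree-$4$ vertex to a vertex of degree at least $5$, they must share a common neighbour of degree $4$.

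The concluding and hardest step is to exploit these local density conditions to exhibit either $K_5$ or $K_{2,2,2}$ as a minor. The plan is to pick a vertex $v$ of degree $4$ with $N(v)=\{a,b,c,d\}$ and analyse $G[\{v\}\cup N(v)]$. If this subgraph is $K_5$, we are done. Otherwise some pair, say $a,b$, of neighbours of $v$ is nonadjacent; the deletion and contraction obstructions applied to the edges $va$ and $vb$, together with the restrictions they impose on the second neighbourhood of $v$, force additional common neighbours among $a,b,c,d$ and outside, eventually yielding six vertices whose induced subgraph witnesses the octahedron $K_{2,2,2}$ as a minor. The main obstacle is precisely this last case analysis: determining exactly which edges are missing from $G[N(v)]$, where the forced common neighbours live, and how to perform simultaneous contractions to realise the minor without collapsing any two of the six intended branch vertices into one another.
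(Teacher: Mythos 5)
Your proposal is not a proof: the entire content of the theorem is deferred to the final step, which you yourself describe as a ``plan'' whose ``main obstacle'' is an unexecuted case analysis. The minimal-counterexample setup and the two local conditions you extract are correct and easy to verify (failure of edge deletion makes the vertices of degree at least five independent; failure of contracting $uv$ gives $\deg(u)+\deg(v)-|N(u)\cap N(v)|\le 5$ or a degree-$4$ common neighbour, so an edge between two degree-$4$ vertices forces three common neighbours or a degree-$4$ common neighbour, and an edge into a vertex of degree at least five forces a degree-$4$ common neighbour). But nothing in the proposal shows that these conditions actually produce a $K_5$ or $K_{2,2,2}$ minor, and the purely local strategy you outline --- inspecting $G[\{v\}\cup N(v)]$ for one degree-$4$ vertex $v$ and its second neighbourhood --- is unlikely to close the argument: the branch sets of an octahedron minor in a minimal counterexample need not be confined to such a small ball, and the derived conditions do not by themselves force the six required branch vertices with the nine required connections. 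Until that case analysis (or a replacement for it) is written out and checked, there is no proof.

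For comparison: the paper does not prove this statement at all; it attributes it to Halin and Jung and points to Fijav\v{z} and Wood for a short proof. The known short arguments are not local in your sense. They reduce a minor-minimal counterexample to the $4$-connected case (handling small separations), then invoke Wagner's characterization of graphs with no $K_5$-minor to conclude such a graph is planar, and finally show that planar graphs of minimum degree four contain the octahedron as a minor. These global ingredients (connectivity reduction plus Wagner's theorem) are exactly what your sketch is missing, and you should expect to need something of that strength rather than the neighbourhood analysis you propose.
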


We have followed Bollob\'{a}s [\ref{boll}, p.$373$] and Maharry [\ref{MAH}, p.96] in attributing Theorem \ref{mainTheorem:graphic} to Halin and Jung. Fijavž and Wood [\ref{FijWood}, Corollary A.4] give a short proof of that theorem and briefly discuss its origins.

When $G$ is a $2$-connected loopless graph, the set of edges that meet a fixed vertex of $G$ is a bond of $G$ and a cocircuit of its cycle matroid $M(G)$. Because of this, it is common in matroid theory to take minimum cocircuit size as a matroid analog of minimum vertex degree in a graph. Moreover, the minimum cocircuit size $M(G)$ is precisely the edge connectivity of $G$.

 The next theorem is the main result of the paper. Most of the matroids appearing in it are familiar. Geometric representations of the rank-$3$ matroids $P_7$ and $O_7$ are shown in Figure \ref{fig:test}. We define $H_{12}$ to be the $12$-element rank-$5$ matroid $O_7 \oplus_2 O_7$ where the basepoint of the $2$-sum in each copy of $O_7$ is the point $p$ denoted in Figure \ref{fig:test}. We recall that $M^*(K_{3,3})$, the bond matroid of $K_{3,3}$, is the rank-$4$ matroid that can be obtained from a twisted $3\times3$ grid (see [\ref{James}, p.652]). The matroid $Q_9$, for which a geometric representation is shown in Figure \ref{fig:K}, is a $9$-element rank-$4$ matroid that is obtained from a twisted $3\times4$ grid by removing the three boxed elements as shown. It is straightforward to check that this matroid is representable over a field $\mathbb{F}$ if and only if $|\mathbb{F}| \geq 3$. The matroid $Q_9$ is represented by $[I_4|A]$ for the ternary matrix $A$ in Figure \ref{fig:K}. One can also show that this matroid is affine over $GF(3)$.

\begin{theorem}\label{mainTheorem}
Every simple matroid in which every cocircuit has at least four elements has $U_{2,5}, F_7,F_7^-,P_7, M^*(K_{3,3}), Q_9, M(K_5), M(K_{2,2,2})$, or $H_{12}$ as a minor.
\end{theorem}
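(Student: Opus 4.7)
My plan is to argue by induction on $|E(M)|$, with the inductive step branching on rank and on connectivity. The low-rank cases are handled directly: for rank $2$, simplicity forces every cocircuit to have size $|E(M)|-1$, so the hypothesis gives $|E(M)|\geq 5$ and hence a $U_{2,5}$-minor; for rank $3$, every line spans at most $|E(M)|-4$ points, and a case analysis on the line structure of a simple rank-$3$ matroid satisfying this inequality, together with the standard classification of small rank-$3$ matroids, produces one of $U_{2,5}, F_7, F_7^{-}$, or $P_7$ as a minor.

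For rank at least $4$, I would split on whether $M$ is $3$-connected. If $M$ is $3$-connected and binary, the Mills--Turner theorem cited in the abstract immediately yields one of $F_7, M^*(K_{3,3}), M(K_5)$, or $M(K_{2,2,2})$. If $M$ is $3$-connected and non-binary, then $M$ contains $U_{2,4}$, and I would use a Wheels-and-Whirls-type reduction together with the structure of $U_{2,4}$-minors: repeated contractions and deletions, each chosen to preserve $3$-connectedness up to simplification and the minimum cocircuit size bound, should force $M$ down to one of $U_{2,5}, F_7^{-}, P_7$, or $Q_9$. The crucial enabling step is to show that from any $3$-connected non-binary matroid with minimum cocircuit size at least $4$ and $|E(M)|$ sufficiently large, one can remove an element while retaining both properties.

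If $M$ is not $3$-connected, a $2$-separation yields a decomposition $M = M_1 \oplus_2 M_2$ with basepoint $p$. The minimum cocircuit size condition on $M$ translates into the following on the two parts: every cocircuit of $M_i\backslash p$ has at least four elements, and for each cocircuit $C_i^*$ of $M_i$ containing $p$ we have $|C_1^*|+|C_2^*|\geq 6$. Applying the inductive hypothesis to each part (set up for matroids with a distinguished element) yields one of the listed minors inside some $M_i$, unless both parts are in a small extremal configuration. The sharp rank-$3$ configuration forced by the $|C_1^*|+|C_2^*|\geq 6$ inequality at $p$ is exactly $O_7$, and two copies glued along $p$ yield $H_{12}$; this is the reason the last matroid appears on the list.

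The hardest step will be the non-binary $3$-connected case. There one must pinpoint precisely how a $U_{2,4}$-minor can sit inside $M$ and generate one of the four candidate non-binary minors, and must reconcile the Wheels-and-Whirls reductions with the minimum cocircuit size bound, which can shrink when parallel classes created by contraction are simplified away. A secondary technical obstacle is ensuring that the $2$-sum analysis genuinely outputs $H_{12}$ rather than a smaller matroid already on the list when the inductive hypothesis fails on both components at once; this is where the distinguished role of the point $p$ of $O_7$ indicated in Figure~\ref{fig:test} must be exploited carefully.
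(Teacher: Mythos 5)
Your outline is a plan rather than a proof, and the decisive step is missing. In the $3$-connected non-binary case you write that a ``Wheels-and-Whirls-type reduction \ldots should force $M$ down to one of $U_{2,5}, F_7^{-}, P_7$, or $Q_9$,'' resting on the claim that from any sufficiently large $3$-connected member of the class one can remove an element while keeping $3$-connectivity and minimum cocircuit size four. That claim is exactly the hard content of the theorem (it amounts to bounding the minor-minimal members), it is nowhere justified, and it is not how such removals behave: contracting an element of a minor-minimal member and simplifying typically destroys the cocircuit bound, which is why the paper does not attempt to preserve membership in $\mathcal{M}_4$ at all. Instead the paper first reduces to the ternary case via the splitter-type Theorem~\ref{ternarySpliiter} (a $3$-connected matroid with no $U_{2,5}$- or $F_7$-minor and not isomorphic to $F_7^*$ is ternary --- note the relevant dichotomy is ternary/non-ternary, not binary/non-binary), and then proves Theorem~\ref{maintheorem:ternary} by analyzing a minor-minimal ternary member of $\mathcal{M}_4$: minimality forces every element into a triangle and a $4$-cocircuit (Lemma~\ref{minCond}), and Section~\ref{Sec:Ternary} (Lemmas~\ref{No4PtLine}--\ref{last}) carries out the long interaction analysis of triangles and $4$-cocircuits that your sketch replaces with a hope. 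Your target list for this case is also not justified: a $3$-connected non-binary member of $\mathcal{M}_4$ is not known a priori to contain one of $U_{2,5}, F_7^{-}, P_7, Q_9$ rather than, say, only $M^*(K_{3,3})$, $M(K_5)$, or $M(K_{2,2,2})$ among the nine.

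The non-$3$-connected branch has a similar gap. You invoke an inductive hypothesis ``set up for matroids with a distinguished element,'' but the parts $M_i$ of the $2$-sum need not lie in $\mathcal{M}_4$ (they may have triads through $p$), so the theorem being proved does not apply to them; the strengthened statement you would need is never formulated, and the assertion that the extremal configuration at $p$ ``is exactly $O_7$'' is stated without argument. In the paper this is Lemma~\ref{MainTheorem:2conn}, whose proof is several pages of orthogonality and connectivity arguments (using Bixby's Lemma and the structure forced by Lemma~\ref{minCond}) to show that a non-$3$-connected minor-minimal ternary member must be $H_{12}$; in the non-ternary case the $2$-sum is dispatched by Theorem~\ref{ternarySpliiter} applied to a $3$-connected part. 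Your rank-$2$ case is fine and your rank-$3$ case is plausible but also left to an unspecified classification; these are minor points compared with the two gaps above, either of which by itself leaves the theorem unproved.
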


It is straightforward to check that each of the nine matroids listed in this theorem is a minor-minimal simple matroid in which every cocircuit has size at least four. As consequences of this theorem, we have the next two results. The first of these was proved by Mills and Turner [\ref{MILTUR}]. The second is the key step in the proof of Theorem \ref{mainTheorem} and its proof occupies most of the paper.
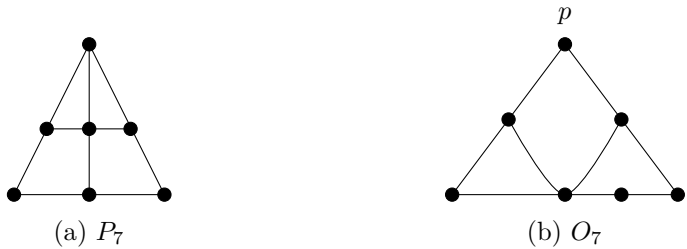
\begin{figure}
\centering
\begin{subfigure}{.5\textwidth}
  \centering
  \begin{tikzpicture}[scale=0.50]

            	\coordinate (a) at (0,0); 
                \coordinate (b) at (2,0);
                \coordinate (c) at (4,0);
                \coordinate (d) at (2,4);
                \coordinate (e) at (.87,1.75);
                \coordinate (f) at (2.,1.75);
                \coordinate (g) at (3.1,1.75);
                
                \draw[fill=black] (a) circle (5pt);
                \draw[fill=black] (b) circle (5pt);
                \draw[fill=black] (c) circle (5pt);
                \draw[fill=black] (d) circle (5pt);
                \draw[fill=black] (e) circle (5pt);
                \draw[fill=black] (f) circle (5pt);
                \draw[fill=black] (g) circle (5pt);

                \draw (a) -- (d) -- (c) -- (a);
                \draw (d) -- (b);
                \draw (e) -- (g);

			\end{tikzpicture}
            \caption{$P_7$}
  \label{fig:sub1}
\end{subfigure}%
\begin{subfigure}{.5\textwidth}
  \centering

  \begin{tikzpicture}[scale=0.50]

            	\coordinate (a) at (0,0); 
                \coordinate (b) at (3,0);
                \coordinate (c) at (4.5,0);

                \coordinate (d) at (6,0);
                \coordinate (e) at (3,4);

                \coordinate (f) at (1.5,2);

                \coordinate (g) at (4.5,2);
                \draw[fill=black] (a) circle (5pt);
                \draw[fill=black] (b) circle (5pt);
                \draw[fill=black] (c) circle (5pt);
                \draw[fill=black] (d) circle (5pt);

                \draw[fill=black] (e) circle (5pt) [label=above:$p$]{};

                \node at (3,4)[label=above:\small $p$]{};

                \draw[fill=black] (f) circle (5pt);

                \draw[fill=black] (g) circle (5pt);

                \draw (a) -- (d);

                \draw (a) --(e) -- (d);

                \draw  plot [smooth] coordinates {(f) (b) (g)};
			\end{tikzpicture}
  
  \caption{$O_7$}
  \label{fig:sub2}
\end{subfigure}
\caption{The matroids $P_7$ and $O_7$. }
\label{fig:test}
\end{figure}

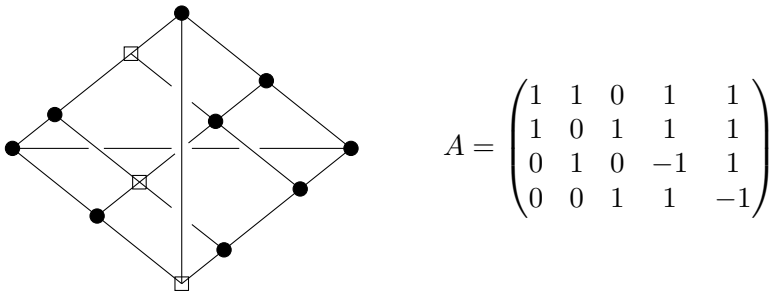
\begin{figure}[b]
    \centering
    \begin{minipage}{.5\textwidth}
        \centering
        \begin{tikzpicture}[scale=0.45]

				\coordinate (t1) at (2.5,2);
				\coordinate (left) at (0,4);

                \coordinate (leftup1) at (1.25,5);
                \coordinate (top) at (5,8);

                \coordinate (bottomright1) at (6.25,1);
                \coordinate (bottomright2) at (8.5, 2.8);
                \coordinate (right) at (10,4);

                \coordinate (topright) at (7.5,6);
                \coordinate (middletop) at (6,4.8);
                
                \coordinate (hole1) at (5,0); 
                \coordinate (hole1BR) at (5.2,-0.2);
                \coordinate (hole1TR) at (5.2, 0.2);
                \coordinate (hole1TL) at (4.8, 0.2);
                \coordinate (hole1BL) at (4.8, -0.2);

                \draw (hole1BR) -- (hole1TR) -- (hole1TL) -- (hole1BL) -- (hole1BR);
                
                \coordinate (hole2) at (3.5, 6.8);
                \coordinate (hole2BR) at (3.7,6.6);
                \coordinate (hole2TR) at (3.7,7);
                \coordinate (hole2TL) at (3.3,7);
                \coordinate (hole2BL) at (3.3,6.6);
                
                \draw (hole2BR) -- (hole2TR) -- (hole2TL) -- (hole2BL) -- (hole2BR);
                
                \coordinate (middlehole) at (3.75,3);
                \coordinate (MholeBR) at (3.95,2.8);
                \coordinate (MholeTR) at (3.95,3.2);
                \coordinate (MholeTL) at (3.55,3.2);
                \coordinate (MholeBL) at (3.55,2.8);

                \draw (MholeBR) -- (MholeTR) -- (MholeTL) -- (MholeBL) -- (MholeBR);
                
                \coordinate (gap1) at (4.7,2.25);
                \coordinate (gap2) at (5.3,1.75);

                \coordinate (gap3) at (4.7, 5.84);
                \coordinate (gap4) at (5.3, 5.36);

                \coordinate (LR1) at (2.25,4);
                \coordinate (LR2) at (2.7,4);
                \coordinate (LR3) at (4.7,4);
                \coordinate (LR4) at (5.3,4);
                \coordinate (LR5) at (6.7,4);
                \coordinate (LR6) at (7.3,4);

                \draw (hole1) -- (top);
                

                \draw (left) -- (LR1);
                \draw (LR2) -- (LR3);
                \draw (LR4) -- (LR5);
                \draw (LR6) -- (right);

                \draw (leftup1) -- (gap1);
                \draw (gap2) -- (bottomright1);
                \draw (hole2) -- (gap3);
                \draw (gap4) -- (bottomright2);
                
                
                \draw[fill=black] (t1) circle (6pt);
                \draw[fill=black] (left) circle (6pt);

                \draw[fill=black] (leftup1) circle (6pt);
                \draw[fill=black] (right) circle (6pt);
                \draw[fill=black] (top) circle (6pt);
                \draw[fill=black] (bottomright1) circle (6pt);
                
                \draw[fill=black] (topright) circle (6pt);
                \draw[fill=black] (bottomright2) circle (6pt);

                \draw[fill=black] (middletop) circle (6pt);

                 \draw (hole1) -- (right);
                 \draw (hole1) -- (left);
                 \draw (top) -- (right);
                 \draw (top) -- (left);
                 \draw (t1) -- (4.8,3.84);
                 \draw (5.2,4.16) -- (topright);
			\end{tikzpicture}
    \end{minipage}%
    \begin{minipage}{0.4\textwidth}
        \centering
         $A =  \begin{pmatrix}
1 & 1 & 0 & 1 & 1\\
  1 & 0 & 1 & 1 & 1\\
  0 & 1 & 0 & -1 & 1\\
  0 & 0 & 1 & 1 & -1 \\
\end{pmatrix} $  
    \end{minipage}

        \caption{The rank-$4$ ternary affine matroid $Q_9$ and the matrix $A$, where $[I_4|A]$ is a ternary representation of $Q_9$.}
    \label{fig:K}
\end{figure}
\begin{theorem}\label{MainTheorem:Binary}
    Let $M$ be a simple binary matroid in which every cocircuit has at least four elements. Then $M$ has $F_7, M^*(K_{3,3}), M(K_5)$, or $M(K_{2,2,2})$ as a minor.
\end{theorem}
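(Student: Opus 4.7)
The plan is to prove Theorem \ref{MainTheorem:Binary} by induction on $|E(M)|$, reducing to the $3$-connected case and then invoking the theorem of Mills and Turner. When $M$ is $3$-connected, the result follows immediately. When $M$ is disconnected, write $M = M_1 \oplus M_2$; every cocircuit of each $M_i$ is a cocircuit of $M$, so each $M_i$ is simple, binary, and has every cocircuit of size at least four, yet has fewer elements than $M$. By induction, each $M_i$ contains one of $F_7$, $M^*(K_{3,3})$, $M(K_5)$, $M(K_{2,2,2})$ as a minor, and hence so does $M$.

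The substantive work is the case when $M$ is connected but not $3$-connected. Using the binary $2$-sum decomposition, we write $M = M_1 \oplus_2 M_2$ with basepoint $p$. The cocircuits of $M$ are the cocircuits of the $M_i$ that avoid $p$, together with all sets of the form $(C_1^* \cup C_2^*) \setminus p$, where $C_i^*$ is a cocircuit of $M_i$ containing $p$. The hypothesis on cocircuit sizes of $M$ yields: every cocircuit of $M_i$ avoiding $p$ has at least four elements, and $\tau_1 + \tau_2 \geq 6$, where $\tau_i$ denotes the minimum size of a cocircuit of $M_i$ containing $p$. Since $\tau_i$ may be as small as $2$, neither $M_i$ in isolation need satisfy the cocircuit hypothesis.

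To drive the induction through, the strategy is to produce from $M$ a proper minor that is still simple, binary, and has all cocircuits of size at least four; or, for each $i$, to construct an auxiliary simple binary matroid (for instance by grafting onto $M_i$ a small substructure of $M_j$ around the basepoint $p$) to which the induction can be applied and from which a target minor can be pulled back into $M$. One must also verify that, in pathological small cases, the desired minor appears directly. The main obstacle is precisely this $2$-separation case: delicate analysis is required, especially when both $\tau_i$ are small (so the small cocircuits through $p$ in the $M_i$ combine across the basepoint to give valid cocircuits of $M$ but prevent a direct inductive step on either side alone). A case split on the pair $(\tau_1, \tau_2)$, together with careful tracking of which substructures on either side of the $2$-sum are forced by the binary structure, is what presumably accounts for the fact that the proof of Theorem \ref{MainTheorem:Binary} occupies most of the paper.
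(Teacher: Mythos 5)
There is a genuine gap, and it is located exactly where the content of the theorem lies. Your plan handles the 3-connected case by ``invoking the theorem of Mills and Turner,'' but Theorem~\ref{MainTheorem:Binary} \emph{is} the Mills--Turner theorem (the paper is reproducing their proof), so this step is circular: the entire substance of the result in the 3-connected case is left unproved. The paper's actual argument there is short but nontrivial: since $M$ has no $F_7$-minor and no triads, $M$ is regular (the alternative forced by the splitter $F_7^*$ has a triad); then Seymour's decomposition theorem says a 3-connected regular $M$ is graphic, cographic, or has an $R_{10}$- or $R_{12}$-minor; the graphic case is killed by Theorem~\ref{mainTheorem:graphic}, the cographic case by Theorem~\ref{HallTheorem} (the only escape, $M^*(K_5)$, has a triad), and $R_{10}$, $R_{12}$ both contain $M^*(K_{3,3})$. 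None of this appears in your proposal.

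The 2-separated case is also not actually resolved: you correctly observe that the cocircuit hypothesis does not pass to the parts $M_1,M_2$ of a 2-sum, but you then only describe candidate strategies (``grafting,'' a case split on $(\tau_1,\tau_2)$) and declare that delicate analysis is required, without carrying any of it out. The paper avoids this difficulty entirely by first passing to a minor-minimal member of $\mathcal{M}_4$ (a reduction your induction never makes) and proving Lemma~\ref{2Sum2Mat}: for such a minimal matroid, failure of 3-connectivity forces $M=M_1\oplus_2 M_2$ with both parts 3-connected of rank at least three, and then the 3-connected analysis applied to a part (which inherits the absence of the four excluded minors) yields an immediate contradiction. Finally, your closing remark that this proof ``occupies most of the paper'' is a misreading: the long argument is the ternary Theorem~\ref{maintheorem:ternary}; the proof of Theorem~\ref{MainTheorem:Binary} is a single short paragraph in Section~2.
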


  \begin{theorem}\label{maintheorem:ternary}
 
    Let $M$ be a simple ternary matroid in which every cocircuit has at least four elements. Then $M$ has $F_7^-, P_7,M^*(K_{3,3}), Q_9,M(K_5), \linebreak M(K_{2,2,2})$, or $H_{12}$ as a minor.
    \end{theorem}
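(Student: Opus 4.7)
The plan is to argue by induction on $|E(M)|$, taking $M$ to be a minimum counterexample: a simple ternary matroid in which every cocircuit has size at least four and that contains none of the seven listed matroids as a minor.

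The first step is to reduce to the case in which $M$ is $3$-connected. A $1$-separation expresses $M$ as a direct sum in which at least one summand inherits the cocircuit hypothesis, so induction applies to that summand. For a $2$-separation I would write $M = M_1 \oplus_2 M_2$ and track the minimum cocircuit size across the $2$-sum; the key point is that a cocircuit of $M_i$ through the basepoint contributes to a cocircuit of $M$ of size $|C_1| + |C_2| - 2$, so the hypothesis on $M$ does not pass cleanly to each summand. The matroid $H_{12}$ is the only matroid on the list that is not $3$-connected, and it is itself a $2$-sum of two copies of $O_7$; it should arise precisely in those configurations where neither $M_i$ individually satisfies the cocircuit hypothesis but the amalgamation does. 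Careful bookkeeping on the cocircuits through each basepoint should either force a summand to contain a listed minor or produce $H_{12}$ itself.

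Once $M$ is $3$-connected, I would split on whether $M$ is binary. If $M$ is binary, then being also ternary it is regular, and applying Theorem \ref{MainTheorem:Binary} yields one of $F_7, M^*(K_{3,3}), M(K_5), M(K_{2,2,2})$ as a minor; since $F_7$ is non-regular, one of the three graphic matroids in our list must appear. We may therefore assume $M$ is non-binary, so by the excluded-minor characterization of ternary matroids, $M$ has a $U_{2,4}$-minor. Using Seymour's Splitter Theorem together with standard $3$-connected extension techniques, I would locate inside $M$ a small non-binary ternary ``core'' minor---plausibly one of $U_{3,4}$, $F_7^-$, $P_7$, or $O_7$---from which to launch the main argument.

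The principal obstacle is the case analysis that begins once such a core is fixed. The cocircuit hypothesis forces every element outside the core to sit in a cocircuit of $M$ of size four, which sharply limits the ternary single-element extensions and coextensions of the core that are compatible with the hypothesis. I would enumerate these extensions for each core and show in each branch that the resulting $3$-connected matroid either already contains one of the seven listed minors or admits a further extension preserving the cocircuit bound, so that induction closes the argument. The new certificates $P_7$ and $Q_9$, absent from Theorem \ref{MainTheorem:Binary}, should correspond to distinct branches of this enumeration: $P_7$ to rank-$3$ completions of certain near-Fano configurations, and $Q_9$ to the rank-$4$ ternary-specific extensions that emerge when a $U_{2,4}$-restriction persists into higher rank. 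The combinatorial breadth of this case split, together with repeated appeals to the cocircuit condition to rule out non-extendable configurations, is where I expect the bulk of the proof's length to lie.
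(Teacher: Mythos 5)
Your opening reductions are reasonable and partly parallel the paper: the 2-sum analysis producing $H_{12}$ corresponds to Lemmas~\ref{2Sum2Mat} and \ref{MainTheorem:2conn}, and handling the binary sub-case via regularity and Theorem~\ref{MainTheorem:Binary} is a legitimate shortcut (the paper instead derives Theorem~\ref{MainTheorem:Binary} as a corollary and never needs the binary/non-binary split). But both of these steps are only asserted, not proved: the ``careful bookkeeping'' at the 2-sum is in fact one of the longest arguments in the paper (Lemma~\ref{MainTheorem:2conn}), and, more seriously, the principal case --- $M$ $3$-connected, ternary and non-binary --- is left entirely as a plan. Saying you would ``locate a core'' ($U_{3,4}$, $F_7^-$, $P_7$, or $O_7$) and ``enumerate the compatible extensions'' names the hard part of the theorem without doing any of it; the paper's Section~\ref{Sec:Ternary} is precisely that analysis and occupies a dozen interlocking lemmas.

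There is also a conceptual flaw in the mechanism you propose for that case. The hypothesis that every cocircuit has at least four elements is not inherited by minors, so a Splitter-Theorem chain of $3$-connected minors growing out of a small core gives you nothing to induct on: the intermediate minors will typically have triads, and a $4$-cocircuit of $M$ does not constrain the single-element extensions of the core in the way you suggest (cocircuits of $M$ and cocircuits of a minor are related only loosely). Likewise ``admits a further extension preserving the cocircuit bound, so that induction closes the argument'' has no valid inductive statement behind it, since the bound fails for the smaller matroids you would be passing to. The paper's substitute for this missing engine is minor-minimality inside $\mathcal{M}_4$ together with Lemma~\ref{minCond}: in a minor-minimal member every element lies in a triangle and in a $4$-cocircuit, and the whole argument is then a direct analysis of how these triangles and $4$-cocircuits interact (orthogonality, connectivity calculations, Tutte's Linking Theorem, Bixby's Lemma, and explicit ternary representations, including the infinite family $M_r$ needed to reach $M(K_5)$ and $M(K_{2,2,2})$). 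Without that idea, or a worked-out replacement for it, your proposal does not constitute a proof.
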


Recall that $K_{2,2,2}$ is the octahedron, so its planar dual is the cube. Using this, we see that the next result is the dual of Theorem~\ref{mainTheorem}.

\begin{corollary}
    Every matroid in which all cocircuits have at least three elements and all circuits have at least four elements has, as a minor, $U_{3,5}, F_7^*,\linebreak (F_7^-)^*, P_7^*, Q_9^*, M^*(K_5), H_{12}^*$, or the cycle matroid of the cube or $K_{3,3}$.
\end{corollary}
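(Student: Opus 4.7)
The plan is to derive the corollary as a direct consequence of Theorem~\ref{mainTheorem} via matroid duality. The key observation is that the hypotheses of the corollary are precisely dual to those of the theorem: a matroid $M$ has no loops and no parallel elements if and only if $M$ has no circuits of size one or two, which by duality means $M^*$ has no cocircuits of size one or two. Thus, if $M$ is any matroid in which every cocircuit has at least three elements and every circuit has at least four elements, then $M^*$ has all of its circuits of size at least three (so $M^*$ is simple) and all of its cocircuits of size at least four. Therefore $M^*$ satisfies the hypothesis of Theorem~\ref{mainTheorem}.

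Next I would apply Theorem~\ref{mainTheorem} to $M^*$ to conclude that $M^*$ has as a minor one of
\[
U_{2,5},\; F_7,\; F_7^-,\; P_7,\; M^*(K_{3,3}),\; Q_9,\; M(K_5),\; M(K_{2,2,2}),\; H_{12}.
\]
Using the standard fact that deletion and contraction are interchanged by duality, if $N$ is a minor of $M^*$, obtained as $M^* \setminus X / Y$, then $N^* = M / X \setminus Y$ is a minor of $M$. Hence $M$ contains, as a minor, the dual of one of the nine matroids listed above.

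The final step is a routine matching: $U_{2,5}^* = U_{3,5}$, $(M^*(K_{3,3}))^* = M(K_{3,3})$, and $(M(K_5))^* = M^*(K_5)$, while $F_7^*$, $(F_7^-)^*$, $P_7^*$, $Q_9^*$, and $H_{12}^*$ appear unchanged in name. The only identification that requires a remark is $(M(K_{2,2,2}))^* = M^*(K_{2,2,2})$, and here I would invoke the preamble to the corollary: because the octahedron $K_{2,2,2}$ and the cube are planar duals, their cycle matroids are dual to one another, giving $M^*(K_{2,2,2}) = M(\text{cube})$. Putting these identifications together yields exactly the list in the statement.

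There is essentially no obstacle here beyond recognizing the planar duality between $K_{2,2,2}$ and the cube; everything else is a mechanical translation through duality. The content of the corollary is entirely in Theorem~\ref{mainTheorem}, so no new matroid-theoretic argument is needed.
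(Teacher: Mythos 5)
Your proposal is correct and matches the paper's own argument: the paper disposes of this corollary with exactly the same one-line duality observation, noting that since the octahedron $K_{2,2,2}$ and the cube are planar duals, $(M(K_{2,2,2}))^*$ is the cycle matroid of the cube, and otherwise just dualizing the list in Theorem~\ref{mainTheorem}. Nothing further is needed.
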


Applying this result to graphs, we immediately obtain the following well-known result. 

\begin{corollary}
    Every $3$-edge-connected graph with girth at least four has the cube or $K_{3,3}$ as a minor.
\end{corollary}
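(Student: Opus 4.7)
The plan is to apply the previous corollary to the cycle matroid $M(G)$ and then use graphicness to collapse its nine-matroid conclusion down to two. First I would verify that the hypotheses carry over: since $G$ is $3$-edge-connected, every bond of $G$, equivalently every cocircuit of $M(G)$, has size at least three; and since $G$ has girth at least four, $G$ is loopless and simple and has no triangles, so every circuit of $M(G)$ has at least four elements. Thus the previous corollary applies and gives $M(G)$ one of the nine listed matroids as a minor.

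Next I would use the fact that every minor of a graphic matroid is graphic to eliminate the seven exceptional matroids from the list. I would show that $U_{3,5}$, $(F_7^-)^*$, $P_7^*$, $Q_9^*$, and $H_{12}^*$ are not binary, and hence not graphic; for $H_{12}^*$ one uses that $H_{12} = O_7 \oplus_2 O_7$ has $O_7$ as a minor (obtained from the $2$-sum by deleting the basepoint on one side) and that $O_7$ contains a four-point line and hence a $U_{2,4}$-restriction, together with the fact that representability over $GF(2)$ is self-dual. The matroid $F_7^*$ is binary but not regular, since $F_7$ is not regular and regularity is preserved under duality; hence $F_7^*$ is not graphic. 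Finally, $M^*(K_5)$ is regular but still not graphic, by Whitney's planarity criterion applied to the fact that $K_5$ is non-planar. So only $M(\mathrm{cube})$ and $M(K_{3,3})$ remain among the nine possibilities.

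Finally I would convert the matroid-minor conclusion into a graph-minor conclusion. Both the cube and $K_{3,3}$ are simple and $3$-connected, so by Whitney's $2$-isomorphism theorem each is the unique graph, up to isomorphism, with its cycle matroid. A standard argument then shows that because $M(\mathrm{cube})$ or $M(K_{3,3})$ is obtained from $M(G)$ by matroid deletions and contractions, applying the corresponding edge deletions and contractions in $G$ (and removing isolated vertices) produces a graph isomorphic to the cube or to $K_{3,3}$; thus $G$ has one of these as a graph minor, as required.

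The main obstacle I expect is the case-by-case non-graphicness verification, particularly for $M^*(K_5)$, where one must invoke Whitney's planarity criterion rather than a quick representability argument, and for $H_{12}^*$, where one must first locate a $U_{2,4}$-restriction inside $H_{12}$ via the $2$-sum decomposition before appealing to the self-duality of binary representability. The remaining cases reduce to the simple observation that a ternary matroid that is not binary has a dual with the same property.
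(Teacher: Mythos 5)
Your proposal is correct and is exactly the argument the paper has in mind: the paper derives this corollary by applying the preceding (dual) corollary to $M(G)$ and treating the elimination of the seven non-graphic matroids and the translation back to graph minors as immediate, which is precisely what you spell out. One small phrasing slip: the basepoint $p$ is not an element of $H_{12}=O_7\oplus_2 O_7$, so rather than ``deleting the basepoint,'' just use the standard fact that each part of a $2$-sum is a minor of it (or note directly that $H_{12}$ restricted to one side is $O_7\backslash p$, which still contains the $4$-point line), after which your non-binarity argument goes through unchanged.
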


The study of matroids with many small circuits and cocircuits started with Tutte [\ref{tutteWheels}] when in his Wheels-and-Whirls Theorem, he proved that the only $3$-connected matroids in which every element is in a $3$-circuit and a $3$-cocircuit are wheels and whirls. Miller [\ref{Miller}] found all the matroids with at least thirteen elements such that every pair of elements is in a $4$-circuit and a $4$-cocircuit. Motivated by these results, Pfeil, Oxley, Semple, and Whittle~[\ref{pfiel}] found the $3$-connected matroids with the property that every pair of elements is in a $4$-circuit and every element is in a $3$-cocircuit.  The nine matroids listed in Theorem \ref{mainTheorem} have the property that every element is in a $3$-circuit and in a $4$-cocircuit.

\section{Preliminaries}

Throughout this paper, we will follow the notation and terminology of [\ref{James}]. We denote by $\mathcal{M}_4$ the class of simple matroids in which every cocircuit has at least four elements. The \textit{connectivity function} $\lambda_M$ of a matroid $M$ is defined, for all subsets $X$ of $E(M)$, by
 \begin{align*}
     \lambda_M(X) = r(X) + r^*(X) - |X|.
 \end{align*}
 When it is clear which matroid we are referring to, we will use $\lambda(X)$ in place of $\lambda_M(X)
 $. For disjoint subsets $X$ and $Y$ of $E(M)$, let $\kappa_M(X,Y) = \min\{\lambda_M(S) : X \subseteq S \subseteq E(M) - Y\}$. If $S$ is a set for which this minimum is attained, then $\kappa_M(X,Y) = \lambda_M(S) = \kappa_M(S,E(M)-S)$. In many of our proofs we will use Geelen, Gerards, and Whittle's extension [\ref{ggw}] (see also, for example, [\ref{James}, Theorem 8.5.7]) of Tutte's Linking Theorem [\ref{tutte1}].

\begin{theorem}\label{GGWTutte}
    Let $X$ and $Y$ be disjoint subsets of the ground set of a matroid $M$. Then $M$ has a minor $N$ with  $E(N) = X\cup Y$ for which $\kappa_N(X,Y) = \kappa_M(X,Y)$ such that $N|X = M|X$ and $N|Y = M|Y$.
\end{theorem}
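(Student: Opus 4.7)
The plan is to induct on $|E(M) \setminus (X \cup Y)|$. The base case, $E(M) = X \cup Y$, is immediate with $N = M$. For the inductive step, choose any $e \in E(M) \setminus (X \cup Y)$; because $e$ lies outside $X \cup Y$, both $M \setminus e$ and $M/e$ restrict to $M$ on $X$ and on $Y$, so it suffices to show that at least one of $\kappa_{M\setminus e}(X,Y)$ and $\kappa_{M/e}(X,Y)$ equals $\kappa_M(X,Y)$; induction applied to that minor then finishes the proof. A direct check (taking an optimal $S$ for $\kappa_M(X,Y)$ and replacing it by $S \setminus e$ when $e \in S$) yields $\kappa_{M\setminus e}(X,Y), \kappa_{M/e}(X,Y) \leq \kappa_M(X,Y)$, so only the reverse inequality for one of the two is needed.

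Set $k = \kappa_M(X,Y)$ and suppose for contradiction that $\kappa_{M\setminus e}(X,Y) < k$ and $\kappa_{M/e}(X,Y) < k$. (If $e$ is a loop then $\kappa_{M\setminus e}(X,Y) = k$ trivially, and dually for a coloop, so assume $e$ is neither.) Pick $S_d$ and $S_c$ with $X \subseteq S_d, S_c \subseteq E(M) - Y - e$ attaining these two minima. A standard rank-function computation shows that, for any $S \subseteq E(M) - e$, we have $\lambda_{M\setminus e}(S) = \lambda_M(S)$ unless $e \in \cl^*_M(S)$ in which case $\lambda_{M\setminus e}(S) = \lambda_M(S) - 1$; dually $\lambda_{M/e}(S) = \lambda_M(S)$ unless $e \in \cl_M(S)$, when it is $\lambda_M(S) - 1$. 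Since $S_d$ and $S_c$ are also candidates for $\kappa_M(X,Y)$, each has $\lambda_M$-value at least $k$; together with the strict drops, this forces $\lambda_M(S_d) = \lambda_M(S_c) = k$, $e \in \cl^*_M(S_d)$, and $e \in \cl_M(S_c)$.

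By submodularity of $\lambda_M$, $\lambda_M(S_d \cup S_c) + \lambda_M(S_d \cap S_c) \leq 2k$; since both $S_d \cup S_c$ and $S_d \cap S_c$ are candidates, each has $\lambda_M$-value at least $k$, so equality holds throughout and $\lambda_M(S_d \cup S_c) = k$. Monotonicity of closure and coclosure gives $e \in \cl_M(S_d \cup S_c) \cap \cl^*_M(S_d \cup S_c)$. Setting $T = S_d \cup S_c \cup \{e\}$, the first containment gives $r_M(T) = r_M(S_d \cup S_c)$ while the second gives $r_M(E(M) - T) = r_M(E(M) - S_d - S_c) - 1$; hence $\lambda_M(T) = \lambda_M(S_d \cup S_c) - 1 = k - 1$. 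But $X \subseteq T \subseteq E(M) - Y$, contradicting $\kappa_M(X,Y) = k$. The principal obstacle is precisely this fusing step: combining the independent deletion- and contraction-witnesses $S_d$ and $S_c$ into a single subset of $E(M)$ whose $\lambda_M$-value undercuts $k$; once this contradiction is in hand, the induction runs automatically.
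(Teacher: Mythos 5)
The paper offers no proof of this statement --- it is quoted from Geelen, Gerards, and Whittle (see [\ref{James}, Theorem 8.5.7]) --- so your argument must stand on its own. Your uncrossing step (the formulas for $\lambda_{M\backslash e}$ and $\lambda_{M/e}$, the forced equalities $\lambda_M(S_d)=\lambda_M(S_c)=k$ with $e\in \cl^*_M(S_d)$ and $e\in \cl_M(S_c)$, and the set $T=S_d\cup S_c\cup\{e\}$ with $\lambda_M(T)=k-1$) is correct; it is the standard proof of Tutte's Linking Theorem. The genuine gap is the opening claim that, for every $e\in E(M)-(X\cup Y)$, ``both $M\backslash e$ and $M/e$ restrict to $M$ on $X$ and on $Y$.'' This is true for deletion but false for contraction: $(M/e)|X = M|X$ holds exactly when $e$ is a loop or $e\notin \cl_M(X)$. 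For example, if $e$ is parallel to an element $x$ of $X$, then $x$ is a loop of $M/e$, so $(M/e)|X \neq M|X$. Consequently your induction, as written, only produces a minor $N$ on $X\cup Y$ with $\kappa_N(X,Y)=\kappa_M(X,Y)$ --- that is, Tutte's original theorem --- and does not deliver the extra conclusions $N|X=M|X$ and $N|Y=M|Y$, which are precisely the refinement the paper invokes (it is what guarantees, for instance, that triangles of $M$ survive as triangles of $N$ in the applications).

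The gap can be closed with the tools you already have, by never contracting an element of $\cl_M(X)\cup \cl_M(Y)$. Indeed, suppose $e\in E(M)-(X\cup Y)$ is not a loop. If $e\in \cl_M(Y)$, then $e\in \cl_M(E(M)-S-e)$ for every candidate $S$, so $e\notin \cl^*_M(S)$ and $\lambda_{M\backslash e}(S)=\lambda_M(S)\ge k$; hence $\kappa_{M\backslash e}(X,Y)=k$ and you may delete $e$. If $e\in \cl_M(X)$ and deletion dropped $\kappa$, your own analysis gives a candidate $S$ with $\lambda_M(S)=k$ and $e\in \cl^*_M(S)$; but then $e\in \cl_M(X)\subseteq \cl_M(S)$, so $T=S\cup\{e\}$ satisfies $X\subseteq T\subseteq E(M)-Y$ and $\lambda_M(T)=k-1$, contradicting $\kappa_M(X,Y)=k$. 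So whenever $e\in \cl_M(X)\cup \cl_M(Y)$, deletion preserves $\kappa$ (and always preserves the restrictions), and you only ever contract elements outside $\cl_M(X)\cup \cl_M(Y)$, for which $(M/e)|X=M|X$ and $(M/e)|Y=M|Y$ do hold. Without some such case distinction, the proof does not establish the theorem as stated.
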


The following result of D. W. Hall [\ref{DWHall}] will be used in the proof of Theorem~\ref{MainTheorem:Binary}. 

\begin{theorem}\label{HallTheorem}
    If $G$ is a $3$-connected graph, then $G$ has no $K_{3,3}$-minor if and only if $G$ is planar or its associated simple graph is $K_5$.
\end{theorem}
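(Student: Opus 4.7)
The plan is to prove the two directions of the biconditional separately. The ``if'' direction is immediate: since $K_{3,3}$ is nonplanar, no planar graph has it as a minor, and any graph whose simple graph is $K_5$ has only five vertices whereas $K_{3,3}$ has six, so $K_{3,3}$ cannot be a minor.

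For the converse, suppose $G$ is $3$-connected, nonplanar, and has no $K_{3,3}$-minor. By Kuratowski--Wagner, $G$ has a $K_5$- or $K_{3,3}$-minor and hence has a $K_5$-minor. The goal is to show $|V(G)|=5$. Fix a $K_5$-model $(V_1,\ldots,V_5)$ in $G$ (five pairwise vertex-disjoint connected subsets with an edge of $G$ between each pair) with $V(G)=\bigsqcup V_i$, obtained by absorbing extra vertices into adjacent branch sets using the connectedness of $G$. If $|V(G)|=5$ then every $|V_i|=1$, and the inter-branch-set edges realise $K_5$ as the simple graph of $G$, giving one of the two conclusions. Otherwise $|V(G)|\ge 6$ and some $|V_i|\ge 2$; say $|V_1|\ge 2$. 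I would then aim to derive a $K_{3,3}$-minor, contradicting the hypothesis.

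Let $T_1$ be a spanning tree of $G[V_1]$, let $v$ be a leaf of $T_1$, and let $u$ be its unique neighbour in $V_1$. By $3$-connectivity, $v$ has at least two neighbours outside $V_1$; write $S(v)\subseteq\{V_2,V_3,V_4,V_5\}$ for the set of other branch sets containing such a neighbour. The clean case is $|S(v)|=2$, say $S(v)=\{V_2,V_3\}$: here the six sets $\{v\},V_1\setminus\{v\},V_2,V_3,V_4,V_5$ bipartitioned as $\{\{v\},V_4,V_5\}$ versus $\{V_1\setminus\{v\},V_2,V_3\}$ form a $K_{3,3}$-model. The edges from $v$ are $vu$ together with $v$'s neighbours in $V_2$ and $V_3$; the four edges between $\{V_4,V_5\}$ and $\{V_2,V_3\}$ come from the $K_5$-model; and the edges from $V_1\setminus\{v\}$ to $V_4$ and to $V_5$ are inherited from the $V_1$-to-$V_4$ and $V_1$-to-$V_5$ edges of the $K_5$-model, since $v$ itself has no neighbour in $V_4$ or $V_5$.

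The main obstacle is the subcases $|S(v)|\in\{3,4\}$, where the above bipartition may fail because $v$ can be the sole vertex of $V_1$ carrying some edge out of $V_1$ to a branch set in $S(v)$. To handle this, I would exploit $\kappa(G)\ge 3$ via the fan version of Menger's theorem to produce three internally disjoint paths from $V_1\setminus\{v\}$ (or from $v$) to a chosen target branch set, truncating each at its first entry into $\bigcup V_i$, and then absorbing internal vertices into branch sets so as to reinstate the missing inter-part edges. With careful bookkeeping to keep the six candidate branch sets connected and pairwise disjoint, one reassembles a valid $K_{3,3}$-model, yielding the desired contradiction and completing the proof.
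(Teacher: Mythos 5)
Note first that the paper does not prove this statement at all: it is quoted from D.~W. Hall's 1943 paper and used as a black box, so the only question is whether your argument stands on its own. Your ``if'' direction and your reduction to ``every $3$-connected graph with a $K_5$-minor and at least six vertices has a $K_{3,3}$-minor'' are fine, and your ``clean case'' model is correct, but there are two genuine gaps. First, the assertion ``by $3$-connectivity, $v$ has at least two neighbours outside $V_1$'' does not follow: $3$-connectivity gives $\deg(v)\ge 3$, but a leaf of a spanning tree of $G[V_1]$ may have all of its neighbours inside $V_1$ (take $G[V_1]\cong K_4$ and a star spanning tree not centred at $v$), so $|S(v)|$ can be $0$ or $1$, cases your bipartition cannot even be set up for. (Minimality of the model would give each leaf an outside neighbour, but you destroyed minimality when you absorbed vertices to make the $V_i$ partition $V(G)$, and even then only one such neighbour is guaranteed.) Second, the cases you yourself flag as ``the main obstacle'' are precisely where the content of Hall's theorem lives, and they are only sketched: when $v$ carries all the model edges from $V_1$ to three of the other branch sets, no pair of branch sets is available on the side of $V_1\setminus\{v\}$, and it is not shown that Menger fans can be rerouted and absorbed while keeping the six parts connected and disjoint. ``Careful bookkeeping'' is announced, not performed, so as written this is a plan rather than a proof.

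A cleaner and complete route, essentially the classical one, avoids branch-set surgery altogether. Since $G$ is nonplanar it contains, by Kuratowski's theorem, a subdivision $H$ of $K_5$ or of $K_{3,3}$; the latter gives the minor at once. If $H$ is a $K_5$-subdivision that is not an exact spanning copy of $K_5$, then either some branch path of $H$ has an internal vertex $w$, and $3$-connectivity (deleting the two ends of that branch path) yields a path from $w$ to the rest of $H$ avoiding those ends, or $H=K_5$ but $|V(G)|\ge 6$, and a Menger fan gives a vertex joined by three internally disjoint paths to three branch vertices. After contracting these paths, a short check of where the attachment lands reduces every case to one of two graphs: $K_5$ with one edge subdivided and the subdividing vertex joined to a third branch vertex, or $K_5$ plus a vertex $w$ adjacent to three branch vertices; each contains $K_{3,3}$ as a subgraph (for instance $K_5$ on $\{1,\dots,5\}$ plus $w$ adjacent to $1,2,3$ contains $K_{3,3}$ with parts $\{w,4,5\}$ and $\{1,2,3\}$). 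The only surviving case is an exact spanning $K_5$, which is exactly the conclusion that the associated simple graph of $G$ is $K_5$. I would recommend recasting your converse along these lines, or else supplying in full the case analysis your current outline defers.
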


The next theorem for ternary matroids is reminiscent of the last result. Hall, Mayhew, and van Zwam [\ref{superfluous}] considered similar such results.

\begin{theorem}\label{ternarySpliiter}
    Let $M$ be a $3$-connected matroid having rank and corank at least three. Then $M$ is ternary if and only if $M$ has no $U_{2,5}$- or $F_7$-minor and $M \not\cong F_7^*$. 
\end{theorem}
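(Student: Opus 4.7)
The forward direction is immediate: $U_{2,5}$, $F_7$, and $F_7^*$ are each non-ternary, so a ternary matroid avoids them as minors and is not isomorphic to $F_7^*$.

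For the converse, suppose $M$ is 3-connected with rank and corank at least three, has no $U_{2,5}$- or $F_7$-minor, and $M \not\cong F_7^*$. The standard excluded-minor characterization of ternary matroids asserts that $M$ is ternary if and only if $M$ has no minor in $\{U_{2,5}, U_{3,5}, F_7, F_7^*\}$, so it suffices to show $M$ has neither a $U_{3,5}$- nor an $F_7^*$-minor. The strategy in both cases is to apply Seymour's Splitter Theorem---iteratively in the first case---to reduce to a short list of 3-connected minors of $M$, and then check that each must contain a $U_{2,5}$- or $F_7$-minor, contradicting the hypothesis.

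For the $U_{3,5}$ case, the inequality $r^*(M) \geq 3 > 2 = r^*(U_{3,5})$ forces any $U_{3,5}$-minor of $M$ to be proper. Since $U_{3,5}$ has $5$ elements and so is neither a wheel nor a whirl, iterating Splitter yields a sequence of 3-connected minors $U_{3,5} = N_0 \prec N_1 \prec \cdots \prec N_k = M$, each obtained from its predecessor by a single-element extension or coextension. A direct analysis of the flats of $U_{r,r+2}$ shows that the only 3-connected single-element extensions of $U_{r,r+2}$ are the free extension $U_{r,r+3}$ and certain non-free extensions obtained by placing a new point on a proper flat, while the only 3-connected single-element coextension is $U_{r+1,r+3}$. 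Both $U_{r,r+3}$ and each non-free extension contain a $U_{2,5}$-minor (by contracting $r-2$ carefully chosen elements), whereas $U_{r+1,r+3}$ is again of the form $U_{r',r'+2}$ with corank $2$. Because $r^*(M) \geq 3$ while every $U_{r,r+2}$ has corank $2$, the chain cannot consist entirely of coextensions, so at some step $N_{i+1}$ must be an extension; this $N_{i+1}$, and hence $M$, then contains a $U_{2,5}$-minor, contradicting our hypothesis.

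For the $F_7^*$ case, $M \not\cong F_7^*$ forces any $F_7^*$-minor to be proper, and since $F_7^*$ has $7$ elements it is neither a wheel nor a whirl. Splitter then yields a 3-connected $8$-element minor $M'$ of $M$ that is a single-element extension or coextension of $F_7^*$. I would classify such $M'$: the unique 3-connected binary single-element extension of $F_7^*$ is the affine cube $AG(3,2)$, and $AG(3,2)/e \cong F_7$, whence $M$ would have an $F_7$-minor; every non-binary single-element extension and every 3-connected single-element coextension of $F_7^*$ can be shown, by explicit analysis of the associated modular cut (or its dual), to contain a $U_{2,5}$-minor. Either alternative contradicts the hypothesis. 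The principal obstacle is this classification and verification for $F_7^*$: enumerating all 3-connected single-element extensions and coextensions of $F_7^*$ and verifying that each non-binary example admits a $U_{2,5}$-minor is the most delicate part of the argument.
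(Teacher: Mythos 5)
Your overall strategy is the same as the paper's: reduce to the excluded-minor characterization of ternary matroids, rule out $U_{3,5}$, and then use the Splitter Theorem to analyze the $3$-connected single-element extensions and coextensions of $F_7^*$. (For the $U_{3,5}$ step the paper simply cites Oxley's 1989 theorem [\ref{oxl89}]; your iterated-Splitter chain through the matroids $U_{r,r+2}$ is a reasonable substitute, though the claim that every $3$-connected single-element extension of $U_{r,r+2}$ has a $U_{2,5}$-minor is itself only asserted, not proved.) The genuine gap is exactly the part you flag as ``the principal obstacle'': the classification of the $3$-connected single-element extensions of $F_7^*$ and the verification that each forces a $U_{2,5}$-, $U_{3,5}$-, or $F_7$-minor. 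That case analysis, carried out via modular cuts, is the bulk of the paper's proof of this theorem; deferring it means the proposal does not actually prove the statement. (The coextension case is fine: dually it amounts to observing that every $3$-connected single-element extension of $F_7$ has a $U_{3,5}$-minor, which matches the paper's quick argument.)

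Moreover, the one concrete classification claim you do make in the extension case is false: $AG(3,2)$ is not the unique $3$-connected binary single-element extension of $F_7^*$. The matroid $S_8$ also satisfies $S_8\backslash y\cong F_7^*$ for its distinguished element $y$, and indeed the paper's own modular-cut analysis runs into precisely this extension (the case in which $e$ plays the role of the point $c$ restoring the $\Delta Y$-exchange). The error is repairable, since $S_8/y\cong F_7$ so every binary extension that arises still yields an $F_7$-minor, but as stated the classification is wrong, and the companion claim that every non-binary $3$-connected extension of $F_7^*$ has a $U_{2,5}$-minor is likewise unverified (the paper's analysis sometimes finds $U_{3,5}$ rather than $U_{2,5}$, e.g.\ when $e$ is placed freely on the plane spanned by $\{a,b,u,v\}$, which is harmless only because $U_{3,5}$ has already been excluded). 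To complete your argument you would need to carry out the enumeration of modular cuts of $F_7^*$ and the minor-checking that the paper performs.
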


\begin{proof}
  If $M$ is ternary, then $M$ has no $U_{2,5}$- or $F_7$-minor and $M \not\cong F_7^*$. Conversely, assume $M$ has no $U_{2,5}$- or $F_7$-minor and $M \not\cong F_7^*$. As $M$ is $3$-connected having rank and corank at least three, $M$ does not have $U_{3,5}$ as a minor by [\ref{oxl89}] (see also [\ref{James}, Proposition 12.2.5]). Then by [\ref{bixby0}, \ref{sey0}] (see also [\ref{James}, Theorem 6.5.7]), $M$ is ternary unless $M$ has $F_7^*$ as a minor. Since $M \not\cong  F_7^*$, by the Splitter Theorem, $M$ has a $3$-connected single-element extension or coextension of $F_7^*$ as a minor. Now, a $3$-connected extension $N$ of $F_7$ by the element $e$ either adds $e$ freely to a line of $F_7$, or adds $e$ freely to $F_7$ itself. In each case, $N/e$ has a $U_{2,5}$-minor, so $N^*$ has a $U_{3,5}$-minor, a contradiction.

\begin{figure}
    \centering
  \begin{tikzpicture}[tight/.style={inner sep=.5pt}, loose/.style={inner sep=.7em}, scale=0.60]

    \coordinate (edgeb) at (0,0);
    \coordinate (edget) at (0,6);
    
    \coordinate (rightpagetr) at (5,4.5);
    \coordinate (rightpagebr) at (5,-1.5);
    \coordinate (leftpagetl) at (-5,4.5);
    \coordinate (leftpagebr) at (-5,-1.5);

    \draw (edgeb) -- (edget) -- (rightpagetr) -- (rightpagebr) -- (edgeb);
    \draw (edgeb) -- (edget) -- (leftpagetl) -- (leftpagebr) -- (edgeb);

    \coordinate (bb) at (0,1.5);
    \draw (-.15,1.35) rectangle (.15,1.65);
    \node at (bb)[label= right:\small $t$, tight]{};
    
    \coordinate (bm) at (0,3.5);
    \draw (-.15,3.35) rectangle (.15,3.65);
    \node at (bm)[label= left:\small $s$, tight]{};   
    
    \coordinate (bt) at (0,5);
    \draw (-.15,4.85) rectangle (.15,5.15);
    \node at (bt)[label= right:\small $c$, tight]{};   


    \coordinate (a) at (-4.2,.25);
    \node at (a)[label= below:\small $a$,tight]{};     
    \coordinate (u) at (-2.1,.85);
    \node at (u)[label= below:\small $u$, tight]{};     
    \coordinate (b) at (-1.8,3);
    \node at (b)[label= left:\small $b$, tight]{};     
    \coordinate (v) at (-1.23, 2.57);
    \node at (v)[label= below:\small $v$, tight]{};   
    \coordinate (help) at (-.8, 2);

    \draw[fill=black] (a) circle (5pt);
    \draw[fill=black] (u) circle (5pt);
    \draw[fill=black] (b) circle (5pt);
    \draw[fill=black] (v) circle (5pt);

    \draw (a) -- (bb);
    \draw (a) -- (bt);
    \draw (a) -- (bm);
    \draw (u) -- (bt);
    \draw (b) -- (bb);
    \draw  plot [smooth] coordinates {(b) (u) (help) (bm)};

    \coordinate (h) at (3.7,2);
    \coordinate (f) at (1.8, 3.55);
    \coordinate (g) at (1.3, 2.95);

    \draw[fill=black] (h) circle (5pt);
    \node at (h)[label= below:\small $h$, tight]{};
    \draw[fill=black] (f) circle (5pt);
    \node at (f)[label= above:\small $f$, tight]{};
    \draw[fill=black] (g) circle (5pt);
    \node at (g)[label= below:\small $g$, tight]{};

    \draw (h) -- (bt);
    \draw (h) -- (bm);
    \draw (f) -- (bb);
  \end{tikzpicture}
    \caption{The labels of $F_7^*$ used in Theorem \ref{ternarySpliiter}.}
    \label{fig:F7*}
\end{figure}
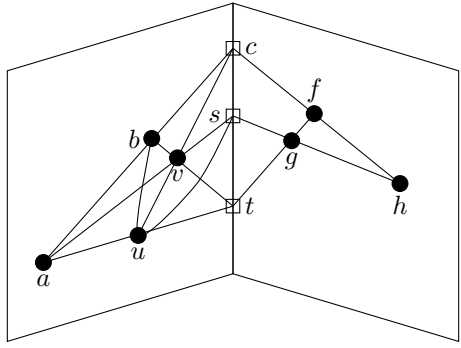

  We now need only consider the $3$-connected single-element extensions of $F_7^*$. A geometric representation of $F_7^*$ is shown in Figure \ref{fig:F7*} where $c,s,$ and $t$ are not elements of $F_7^*$, but show how $F_7^*$ can be obtained from $F_7$ by a $\Delta Y$-exchange. Let $N$ be a $3$-connected single-element extension of $F_7^*$ by the element $e$ and let $\mathcal{M}$ be the corresponding modular cut. Then $\mathcal{M}$ does not contain any rank-one flats. If $\mathcal{M} = \{E(F_7^*)\}$, then $e$ is freely added to $F_7^*$, and $N/e$ has $U_{3,5}$ as a minor, a contradiction. Thus $\mathcal{M}$ must contain some line or some hyperplane of $F_7^*$. Assume that $\mathcal{M}$ contains a line of $F_7^*$. As $F_7^*$ has a doubly transitive automorphism group [\ref{James}, p.643], we may assume that $\{a,b\} \in \mathcal{M}$. Then $\{a,b,u,v\}$ and $\{a,b,f,h\}$ are in $\mathcal{M}$. Assume $\{f,h\} \in \mathcal{M}$. Then $\{f,h,u,v\} \in \mathcal{M}$. As $\{a,b,u,v\}$ and $\{f,h,u,v\}$ form a modular pair, $\{u,v\} \in \mathcal{M}$. Similarly, if $\{u,v\} \in \mathcal{M}$, then $\{f,h\} \in \mathcal{M}$. We deduce that if we have $\{a,b\} \in \mathcal{M}$ and at least one of $\{f,h\}$ and $\{u,v\}$ is in $\mathcal{M}$, then the point we added corresponding to $\mathcal{M}$ is $c$ and the extension is isomorphic to $S_8$.

  If $e$ is added freely on $\{a,b\}$, then contracting $e$ and $g$ gives $U_{2,5}$ as a minor, a contradiction. We may now assume that $e$ is not added to any $2$-point line of $F_7^*$. We now know that the smallest flat in $\mathcal{M}$ has rank three. Because the hyperplanes of $F_7^*$ are of two types, complements of triangles in $F_7$ and complements of $4$-circuits in $F_7$, by symmetry, we may assume that $e$ is added on one of the planes spanned by $\{f,g,h\}$ or $\{a,b,u,v\}$.

  If $e$ is placed on the plane $\{a,b,u,v\}$, then, since it is not on any $2$-point lines, we see that $N|\{a,b,u,v,e\} \cong U_{3,5}$, a contradiction. We deduce that $e$ is placed on the plane spanned by $\{f,g,h\}$ but not on any of the lines spanned by $\{f,g\}, \{f,h\},$ or $\{g,h\}$. We know that $\{a,b,u,v\} \not\in \mathcal{M}$. Moreover, if $M$ is the principal modular cut generated by $\{f,g,h\}$, then $N/a/e$ has $U_{2,5}$ as a restriction, a contradiction.
\begin{figure}
    \centering
  \begin{tikzpicture}[scale=0.55]
    \coordinate (u) at (0,0);
    \node at (u)[label= left:\small $u$]{};
    \coordinate (h) at (6,0);
    \node at (h)[label= right:\small $h$]{};
    \coordinate (b) at (3,6);
    \node at (b)[label= left:\small $b$]{};
    \coordinate (v) at (1.5,3);
    \node at (v)[label= left:\small $v$]{};
    \coordinate (f) at (4.5,3);
    \node at (f)[label= right:\small $f$]{};
    \coordinate (g) at (3,2);
    \node at (g)[label= below:\small $g$]{};
    
    \draw[fill=black] (u) circle (5pt);
    \draw[fill=black] (h) circle (5pt);
    \draw[fill=black] (b) circle (5pt);
    \draw[fill=black] (v) circle (5pt);
    \draw[fill=black] (f) circle (5pt);
    \draw[fill=black] (g) circle (5pt);

    \draw (u) -- (b) -- (h);
    \draw (h) -- (v);
    \draw (u) -- (f);
  \end{tikzpicture}
    \caption{The matroid $N\backslash e/a$ in Theorem \ref{ternarySpliiter}.}
    \label{fig:mk4}
\end{figure}
  We deduce that $\mathcal{M}$ contains a flat other than $\{f,g,h\}$ and $E(F_7^*)$. Now $N\backslash e / a$ is isomorphic to the copy of $M(K_4)$ labeled as in Figure \ref{fig:mk4}. By symmetry, we may assume that $\{a,b,e,g\}$ is a circuit of $N$. We see that $\{e,f,v\}$ and $\{e,h,u\}$ are not both circuits of $N / a$ otherwise $N/a \cong F_7$. By symmetry, we may assume that $\{e,h,u\}$ is not a circuit of $N /a$. If $\{e,f,v\}$ is not a circuit of $N/a$, then contracting $e$ from $N/a$ gives a matroid with $U_{2,5}$ as a restriction. Thus we may assume that $\{e,f,v\}$ is a circuit of $N/a$, so $\{a,e,f,v\}$ is a circuit of $N$. 

  We now know that $N$ has $\{a,b,e,g\}$ and $\{a,e,f,v\}$ as circuits. The matroid $N/h$ has $e$ on the line spanned by $\{f,g\}$. But $\{a,e,u\}$ is not a triangle of $N/h$. If $\{b,e,v\}$ is not a triangle of $N/ h$, then $(N/h)|\{a,b,e,u,v\} \cong U_{3,5}$. Thus $\{b,e,v\}$ is a triangle of $N/ h$. Hence $\{b,e,h,v\}$ is a circuit of $N$. 

  Consider $N/v$. We know that $N/v\backslash e \cong M(K_4)$. Also $N/v$ has $\{a,e,f\}$ and $\{b,e,h\}$ as triangles. Since $N/v$ is not isomorphic to $F_7$, we deduce that $\{e,g,u,v\}$ is not a circuit of $N$. Now consider $N/u$. We know that $\{a,e,h\}$ and $\{e,g,v\}$ are not triangles of this matroid. Then $N/u/e$ has $U_{2,5}$ as a restriction, a contradiction.
  \end{proof}

The next lemma identifies a key property of the minor-minimal members of $\mathcal{M}_4$ that will be used repeatedly throughout the paper.

\begin{lemma}\label{minCond}
    Let $M$ be a minor-minimal matroid in $\mathcal{M}_4$. Let $e$ be an element of $M$. Then $e$ is in a triangle and a $4$-cocircuit.
\end{lemma}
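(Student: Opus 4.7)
The plan is to handle the triangle condition and the $4$-cocircuit condition separately, in each case invoking minor-minimality of $M$ by examining a single-element deletion or contraction.

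For the $4$-cocircuit, I will use that $M\backslash e$ remains simple (being a restriction of the simple matroid $M$), so minor-minimality forces $M\backslash e\notin\mathcal{M}_4$. Hence $M\backslash e$ has a cocircuit $D^*$ with $|D^*|\le 3$. Since every cocircuit of $M\backslash e$ is a minimal non-empty set of the form $C^*\setminus\{e\}$ for some cocircuit $C^*$ of $M$, I pick such a $C^*$. The bounds $|C^*|\ge 4$ and $|C^*\setminus\{e\}|\le 3$ then force $e\in C^*$ and $|C^*|=4$, which is exactly the desired $4$-cocircuit through $e$.

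For the triangle, I argue by contradiction: suppose $e$ lies in no triangle of $M$. Since $M$ is simple, $M/e$ has no loops; and $\{f,g\}$ would be a parallel pair in $M/e$ only if $\{e,f,g\}$ were a circuit of $M$, which, since $M$ is simple, means a triangle. By hypothesis there is no such triangle, so $M/e$ has no parallel pairs either, and hence is simple. Minor-minimality now gives $M/e\notin\mathcal{M}_4$, so $M/e$ has a cocircuit of size at most $3$. But the cocircuits of $M/e$ are precisely the cocircuits of $M$ that avoid $e$, yielding a cocircuit of $M$ of size at most $3$ and contradicting $M\in\mathcal{M}_4$.

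The only point that needs care is keeping the effect of deletion and contraction on cocircuits straight: deleting $e$ shrinks cocircuits through $e$ by one (and may leave them non-minimal), while contracting $e$ discards cocircuits through $e$ entirely. Once these translations are pinned down, the numerical constraints do the rest, and no further case analysis is needed; this is the sort of bookkeeping step rather than a genuine obstacle.
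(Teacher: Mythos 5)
Your proposal is correct and follows essentially the same route as the paper: contract $e$ (noting $M/e$ stays simple when $e$ is in no triangle, and its cocircuits are the cocircuits of $M$ avoiding $e$) to force a triangle, and delete $e$ (noting $M\backslash e$ stays simple, and its small cocircuit comes from a cocircuit of $M$ through $e$) to force a $4$-cocircuit. The extra care you take in translating cocircuits under deletion and contraction is exactly the bookkeeping implicit in the paper's two-line argument.
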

\begin{proof}
    Assume $e$ is not in a triangle. Consider $M/e$. Then every cocircuit in $M/e$ has size at least four and is simple. Thus $M/e$ contradicts the minimality of $M$. Therefore, $e$ is in a triangle.
    Now assume that $e$ is not in a $4$-cocircuit. Since $M \backslash e$ has a cocircuit $C^*$ of size less than four, we see that $C^* \cup e$ is a cocircuit of $M$ having size four . 
\end{proof}

\begin{lemma}\label{2Sum2Mat}
    Let $M$ be a minor-minimal matroid in $\mathcal{M}_4$. If $M$ is not $3$-connected, then $M= M_1 \oplus_2 M_2$ where $M_1$ and $M_2$ are $3$-connected and each has rank at least three.
\end{lemma}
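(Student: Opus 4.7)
The plan is to establish 2-connectivity first, then extract a 2-sum decomposition, and finally verify that each factor is 3-connected with rank at least three.

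To show $M$ is 2-connected, I would argue by contradiction. If $M = N_1 \oplus N_2$ is a nontrivial direct sum, each $N_i$ is a proper restriction of $M$, hence simple, and its cocircuits are exactly those cocircuits of $M$ contained in $E(N_i)$, all of size at least four. Thus each $N_i \in \mathcal{M}_4$, contradicting minor-minimality.

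Since $M$ is 2-connected but not 3-connected, Oxley's 2-sum theory [\ref{James}, Chapter 8] gives $M = M_1 \oplus_2 M_2$ with both $M_i$ 2-connected, $|E(M_i)| \geq 3$, and common basepoint $p$. Iteratively refining any factor that is not yet 3-connected produces a decomposition $M = N_1 \oplus_2 \cdots \oplus_2 N_k$ with $k \geq 2$ and each $N_i$ 3-connected; the goal is to show $k = 2$ and $\mathrm{rank}(N_i) \geq 3$ for each $i$. For the rank, suppose a leaf $N_1$ has $\mathrm{rank}(N_1) \leq 2$; being simple and 3-connected, $N_1 \cong U_{2,n}$ for some $n \geq 3$. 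Then $X := E(N_1) - \{p\}$ is a cocircuit of $M$ of size $n-1$, forcing $n \geq 5$. When $n \geq 6$, the minor $M \backslash a$ for any $a \in X$ equals $U_{2,n-1} \oplus_2 M'$, where $M'$ is the 2-sum of the remaining $N_j$'s; all of its cocircuits still have size at least four, so it is a proper minor in $\mathcal{M}_4$, contradicting minimality. When $n = 5$, $\si(M/a)$ is isomorphic to $M'$ (with an element of $X - a$ relabelled as $p$); if $M' \in \mathcal{M}_4$ we are done, otherwise either $M'$ has a parallel to $p$ (so $M | (X \cup \{v\}) \cong U_{2,5}$ is a proper restriction in $\mathcal{M}_4$), or $M'$ has a short cocircuit through $p$, in which case orthogonality of circuits and cocircuits rules out problematic triangles and a contraction $M / v$ in $\mathcal{M}_4$ gives the contradiction.

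To rule out $k \geq 3$, I would cut an internal edge of the decomposition tree and apply a similar minor-analysis on the two sides. The main obstacle is the $n = 5$ leaf case, where the 4-cocircuit bound is tight; careful use of orthogonality of circuits and cocircuits together with Lemma \ref{minCond} (every element lies in a triangle and a 4-cocircuit) is needed to produce the required proper minor.
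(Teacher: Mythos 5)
Your opening steps are fine (the direct-sum argument for $2$-connectivity, and the observation that a rank-$2$ leaf $U_{2,n}$ yields the cocircuit $E(N_1)-p$ of size $n-1$, forcing $n\geq 5$), but the proof is incomplete exactly where the work lies, and you say so yourself: the case $k\geq 3$ is dispatched with ``I would cut an internal edge of the decomposition tree and apply a similar minor-analysis,'' and the $n=5$ branch ends with the remark that careful use of orthogonality ``is needed.'' Neither is carried out, and the difficulty is genuine: a part (or a side obtained by cutting an internal tree edge) of a $2$-sum decomposition of a member of $\mathcal{M}_4$ need not itself belong to $\mathcal{M}_4$, because cocircuits through the basepoint can be small --- the paper's own matroid $H_{12}=O_7\oplus_2 O_7$, where each $O_7$ has a triad through the basepoint, shows this is not a technicality. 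So ``delete or contract an element and check the cocircuits'' has no general mechanism behind it, which is precisely why your $n=5$ subcase stalls. The paper's proof sidesteps all of this: in the canonical tree decomposition, a leaf cannot be a circuit (that gives a $2$-cocircuit of $M$) or a cocircuit (that gives a parallel pair), so every leaf is $3$-connected with at least four elements, hence cosimple; the $2$-sum $M_1\oplus_2 M_2$ of two leaf labels is then a simple minor of $M$ all of whose cocircuits have size at least four (those avoiding the basepoint are cocircuits of $M$; those through it pick up at least two elements from each cosimple leaf), so minimality forces $M=M_1\oplus_2 M_2$. That single step delivers both ``exactly two parts'' and their $3$-connectivity, and it is the key idea your proposal is missing.

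Two smaller points. In the rank-$2$ case you overlook a one-line finish: once a part is $U_{2,n}$ with $n\geq 5$, the matroid $U_{2,5}$ is a minor of $M$ lying in $\mathcal{M}_4$, and it is a \emph{proper} minor because $M$ is not $3$-connected; minimality is contradicted at once, so the whole $M\backslash a$ and $\si(M/a)$ analysis, including the unproven $n=5$ branch, is unnecessary --- this is exactly how the paper handles rank two. Also, you assert that a $3$-connected leaf of your refined decomposition is simple; that needs a word, since $U_{1,3}$ is $3$-connected but not simple (it is excluded because its two non-basepoint elements would be parallel in $M$), while a $3$-connected leaf with at least four elements is simple automatically.
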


\begin{proof}
    The minimality of $M$ implies that $M$ is $2$-connected. Let $T$ be the canonical tree decomposition of $M$ (see, for example, [\ref{James}, Section 8.3]). Consider a matroid $L$ that labels a leaf of $T$. If $L$ is a circuit, then $M$ has a $2$-cocircuit, a contradiction. Moreover, since $M$ is simple, $L$ is not a cocircuit. Thus $L$ is a $3$-connected matroid with at least four elements. Hence if $M_1$ and $M_2$ label distinct leaves of $T$, then $M$ has $M_1 \oplus _2 M_2$ as a minor. Therefore, as every cocircuit of $M_1\oplus_2M_2$ has size at least four, we deduce that $M = M_1 \oplus_2 M_2$. If $r(M_i) =2$ for some $i$, then, as every cocircuit of $M$ has at least four elements, we deduce that $|E(M_i)| \geq 5$, so $M$ has $U_{2,5}$ as a minor, a contradiction.
\end{proof}

Theorem \ref{MainTheorem:Binary} was originally proved by Mills and Turner [\ref{MILTUR}]; we provide their short proof for the sake of completeness. Note that the proof of Theorem~\ref{mainTheorem} does not rely on Theorem~\ref{MainTheorem:Binary}. Instead, Theorem~\ref{MainTheorem:Binary} can be deduced as an immediate corollary of Theorem~\ref{mainTheorem}.

\begin{proof}[Proof of Theorem \ref{MainTheorem:Binary}]
   Assume that $M$ has none of $F_7, M^*(K_{3,3}), M(K_5),$ or $ M(K_{2,2,2})$ as a minor. Then, as $M$ does not have an $F_7$-minor, it follows by [\ref{sey}] (see also [\ref{James}, Proposition 12.2.3]) that $M$ is regular otherwise $M \cong F_7^*$, which is a contradiction since $F_7^*$ has a triad. We show next that 

   \begin{sublemma}\label{binary:3-conn}
       $M$ is not $3$-connected. 
   \end{sublemma}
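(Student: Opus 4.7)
The plan is to argue by contradiction: assume $M$ is 3-connected and then invoke Seymour's decomposition theorem for regular matroids, which classifies every 3-connected regular matroid as graphic, cographic, isomorphic to $R_{10}$, or decomposable as a non-trivial 3-sum.

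First I would dispatch the graphic case. Writing $M \cong M(G)$ for a simple 3-connected graph $G$, the hypothesis that every cocircuit of $M$ has at least four elements translates into every bond of $G$ having size at least four; since each vertex star is a bond, $G$ has minimum degree at least four. The Halin-Jung theorem (Theorem \ref{mainTheorem:graphic}) then exhibits $K_5$ or $K_{2,2,2}$ as a minor of $G$, and hence $M(K_5)$ or $M(K_{2,2,2})$ as a minor of $M$, contradicting the standing assumption. For the cographic case $M \cong M^*(G)$, I would translate the hypotheses on $M$ into properties of $G$: simplicity and 3-connectivity of $M$ make $G$ a simple bridgeless 3-connected graph; the cocircuit condition becomes $G$ having girth at least four; and the absence of an $M^*(K_{3,3})$-minor of $M$ is equivalent to $G$ having no $K_{3,3}$-minor. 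Hall's theorem (Theorem \ref{HallTheorem}) then forces $G$ to be planar or isomorphic to $K_5$; the latter is ruled out by girth three, while in the former case $M \cong M(G^*)$ is graphic, reducing to the previous case. The case $M \cong R_{10}$ is immediate from the standard fact that contracting any element of $R_{10}$ produces $M^*(K_{3,3})$.

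The main obstacle is the final case, in which $M$ admits a non-trivial 3-sum decomposition $M = M_1 \oplus_3 M_2$ along a shared triangle, with each $M_i$ a 3-connected regular matroid having strictly fewer elements. The difficulty is that the summands need not lie in $\mathcal{M}_4$: the three elements of the shared triangle are absent from $M_i$, so cocircuits of $M_i$ that originally met those elements may now have size less than four. I would attempt this by induction on $|E(M)|$, analyzing how the cocircuits of $M$ decompose across the 3-sum and arguing that, possibly after a single deletion or contraction within one summand, enough of the cocircuit structure persists to apply the inductive hypothesis and locate one of the forbidden minors in $M$.
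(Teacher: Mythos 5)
Your graphic, cographic, and $R_{10}$ cases are essentially the paper's argument: minimum degree four plus Halin--Jung kills the graphic case, Hall's theorem (Theorem \ref{HallTheorem}) kills the cographic case (the paper rules out $M^*(K_5)$ because it has a triad, which is the same observation as your girth remark), and $R_{10}/e \cong M^*(K_{3,3})$ disposes of $R_{10}$. The difference, and the problem, is which form of Seymour's theorem you invoke. The paper uses the excluded-minor form ([\ref{James}, Theorem 13.1.2]): a $3$-connected regular matroid is graphic, cographic, or has an $R_{10}$- or $R_{12}$-\emph{minor}. Since $R_{10}$ and $R_{12}$ both have $M^*(K_{3,3})$-minors, that last alternative contradicts the standing assumption immediately, and there is no $3$-sum case at all.

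By instead taking the constructive decomposition form, you are left with the case $M = M_1 \oplus_3 M_2$, and your treatment of it is not a proof but a declaration of intent. The difficulty you correctly sense (though note the triangle elements lie in the summands $M_1, M_2$ and are absent from $M$, not the reverse) is real: the summands are minors of $M$, so they inherit the absence of the four forbidden minors, but they need not lie in $\mathcal{M}_4$, since cocircuits of $M_i$ meeting the summing triangle do not survive into $M$ and may have size three or less. Consequently the inductive hypothesis (whether you induct on the sublemma or on Theorem \ref{MainTheorem:Binary} itself) does not apply to $M_1$ or $M_2$, and the claim that ``possibly after a single deletion or contraction within one summand, enough of the cocircuit structure persists'' is unsubstantiated -- you give no argument that such a single operation restores the cocircuit condition, and in general it will not. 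As written, the $3$-sum case is a genuine gap, and it is precisely the case whose analysis the paper's choice of theorem avoids; to salvage your route you would either have to carry out a nontrivial analysis of how $4$-cocircuits of $M$ interact with the $3$-separation, or switch to the $R_{10}$/$R_{12}$-minor statement as the paper does.
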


   Assume that $M$ is $3$-connected. Then, by a result of Seymour [\ref{sey}] (see also [\ref{James}, Theorem 13.1.2]), $M$ is graphic or cographic, or $M$ has a minor isomorphic to $R_{10}$ or $R_{12}$. By Theorem \ref{mainTheorem:graphic}, $M$ is not graphic. Suppose $M$ is cographic. Then, as $M$ is not graphic, $M$ is the bond matroid of a nonplanar graph $G$. Since $M$ does not have $M^*(K_{3,3})$ as a minor, it follows, by Theorem \ref{HallTheorem}, that $M \cong M^*(K_5)$. Thus $M$ has a triad, a contradiction. We conclude that $M$ is not cographic. Finally, if $M$ has $R_{10}$ or $R_{12}$ as a minor, then $M$ has $M^*(K_{3,3})$ as a minor, a contradiction. Thus \ref{binary:3-conn} holds.

   By Lemma~\ref{2Sum2Mat}, $M = M_1 \oplus_2 M_2$ where $M_1$ and $M_2$ are $3$-connected. As $M_1$ has none of $F_7, M^*(K_{3,3}), M(K_5),$ or $ M(K_{2,2,2})$ as a minor, \ref{binary:3-conn} implies that $M_1$ is not $3$-connected, a contradiction. 
\end{proof}

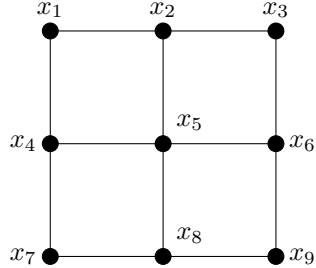
\begin{figure}
    \centering
    
       \begin{tikzpicture}[tight/.style={inner sep=1pt}, loose/.style={inner sep=.7em}, scale=0.50]

			\coordinate (bl) at (0,0);
            \coordinate (bm) at (3,0);
            \coordinate (br) at (6,0);

            \coordinate (ml) at (0,3);
            \coordinate (mm) at (3,3);
            \coordinate (mr) at (6,3);

            \coordinate (tl) at (0,6);
            \coordinate (tm) at (3,6);
            \coordinate (tr) at (6,6);

            \draw[fill=black] (bl) circle (6pt)[label=left:]{};
            \node at (bl)[label=left:\small $x_7$, tight]{};
            
            \draw[fill=black] (bm) circle (6pt)[label=topright:$x_8$, tight]{};
            \node at (bm)[label=above right:\small $x_8$, tight]{};
            
            \draw[fill=black] (br) circle (6pt)[label=right:$x_9$, tight]{};
            \node at (br)[label= right:\small $x_9$, tight]{};

            \draw[fill=black] (ml) circle (6pt)[label=left:$x_4$, tight]{};
            \node at (ml)[label=left:\small $x_4$, tight]{};
            
            \draw[fill=black] (mm) circle (6pt)[label=topright:$x_5$, tight]{};
            \node at (mm)[label=above right:\small $x_5$, tight]{};
            
            \draw[fill=black] (mr) circle (6pt)[label=right:$x$]{};
            \node at (mr)[label=right:\small $x_6$, tight]{};
            
            \draw[fill=black] (tr) circle (6pt)[label=above:$x_3$]{};
            \node at (tr)[label=above:\small $x_3$, tight]{};
            \draw[fill=black] (tm) circle (6pt)[label=above:$x_2$]{};
            \node at (tm)[label=above:\small $x_2$, tight]{};
            \draw[fill=black] (tl) circle (6pt)[label=above:$x_1$]{};
            \node at (tl)[label=above:\small $x_1$, tight]{};
            
            \draw (bl) -- (br) -- (tr) -- (tl) -- (bl);      
            \draw (tm) -- (bm);
            \draw (ml) -- (mr);
			\end{tikzpicture}
 

    \caption{Some $3$-point lines in a rank-$4$ matroid.}
    \label{fig:k33}
\end{figure}

\begin{lemma}\label{lem33}
    Let $M$ be a $9$-element rank-$4$ matroid that has each of the $3$-point lines in Figure $\ref{fig:k33}$ as a triangle. Then $M$ is isomorphic to $M^*(K_{3,3})$.
\end{lemma}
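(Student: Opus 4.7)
The plan is to identify all circuits of $M$ using the six given triangles, then match them against the circuits of $M^*(K_{3,3})$. Label the triangles $R_1,R_2,R_3$ (rows) and $C_1,C_2,C_3$ (columns) as in Figure~\ref{fig:k33}, so that the element $x_{(i,j)}$ in row $i$ and column $j$ lies in $R_i \cap C_j$. The key tool is the \emph{line-closure observation}: since each $R_i$ and $C_j$ is a triangle of the simple matroid $M$, it is a rank-$2$ flat with exactly three points, so any flat of $M$ that contains two elements of some $R_i$ or $C_j$ must contain the third.

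First I would show that the only triangles of $M$ are these six. A triangle that shares two elements with some $R_i$ or $C_j$ must equal it, so any extra triangle $T$ would be a grid-transversal; by symmetry take $T = \{x_1, x_5, x_9\}$. Then the flat $P = \cl_M(T \cup \{x_2\})$ has rank $3$, and iterated line-closure ($R_1$ gives $x_3$, then $C_2$ gives $x_8$, then $C_3$ gives $x_6$, then $R_3$ gives $x_7$, and then $R_2$ gives $x_4$) forces $P$ to contain all nine elements, contradicting $r(M) = 4$. Next, for each pair $(i,j)$, the triangles $R_i$ and $C_j$ meet at $x_{(i,j)}$, so weak circuit elimination yields a circuit inside the four-element ``L-shape'' $(R_i \cup C_j) \setminus \{x_{(i,j)}\}$. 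Since no row or column triangle sits in this four-element set, the set itself is a $4$-circuit, giving nine $4$-circuits of $M$, one per grid element.

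Then I would show these L-shapes are the only $4$-circuits. If $S$ is a $4$-circuit, then $S$ contains no triangle and spans a rank-$3$ plane. A short combinatorial lemma about the $3 \times 3$ grid---any subset closed under the rule ``two in a common row or column forces the third'' intersects each row and each column in $\{0,1,3\}$ elements, and a case analysis on which rows and columns are fully contained then shows that such a closed set of size at least $4$ is either an $R_i \cup C_j$ (five elements) or the entire grid---combined with line-closure in the plane implies either $\cl_M(S) = E(M)$ (contradicting $r(M) = 4$) or $S \subseteq R_i \cup C_j$ for some $i,j$. In the latter case the unique $4$-subset of the five-element set $R_i \cup C_j$ that avoids both $R_i$ and $C_j$ is $(R_i \cup C_j) \setminus \{x_{(i,j)}\}$, so $S$ is an L-shape. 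Finally, the bijection $x_{(i,j)} \mapsto a_i b_j$ from $E(M)$ to $E(K_{3,3})$ sends the six triangles to the six vertex bonds and the nine L-shapes to the nine size-$4$ bonds of $K_{3,3}$, matching the $3$- and $4$-circuits of $M^*(K_{3,3})$. Because both matroids have rank $4$ on nine elements, and a $4$-subset is a basis in each exactly when it neither contains a triangle nor is a $4$-circuit, the bases coincide under this bijection and $M \cong M^*(K_{3,3})$. The main obstacle is the grid combinatorial lemma, but it reduces to a short check on the possible row and column intersection profiles of a closed subset.
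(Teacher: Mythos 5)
Your proposal is correct in substance, but it takes a genuinely different route from the paper. The paper's proof first shows that $M$ is ternary (each $M/x_i$ is $M(K_4)$ with parallel elements added, so $M$ has no $U_{2,5}$-, $U_{3,5}$- or $F_7$-minor, and no $F_7^*$-minor since deleting two elements cannot destroy all six triangles), and then pins $M$ down by constructing a $GF(3)$ representation $[I_4|A_4]$ relative to the basis $\{x_1,x_2,x_4,x_5\}$, using the fundamental circuits together with $\{x_3,x_6,x_9\}$ and $\{x_7,x_8,x_9\}$ to force every entry; since $M^*(K_{3,3})$ is a ternary rank-$4$ matroid with the same six triangles, the two matroids share this representation. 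You instead determine every circuit of size at most four directly -- the six lines and the nine ``L-shapes'' $(R_i\cup C_j)\setminus\{x_{(i,j)}\}$ -- and then compare bases, which avoids representability results entirely; the price is the grid-closure case analysis, which is correct as you sketch it (a closed subset of size at least four must contain a full row and a full column, hence some $R_i\cup C_j$, and any further element forces the whole grid), and your final step is sound because a $4$-subset of a rank-$4$ matroid on nine elements is a basis precisely when it contains no circuit of size at most four, and $M^*(K_{3,3})$ has no circuits of size at most four other than the six vertex stars and the nine $4$-element bonds. Two assertions in your first paragraph do need explicit justification, since neither simplicity nor the flatness of the six triangles is part of the hypothesis: a loop or a parallel pair, or a fourth point on one of the six lines, forces all nine elements into a set of rank at most three by exactly the iterated line-closure argument you already use for the transversal triangle, contradicting $r(M)=4$. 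These are one-line repairs rather than genuine gaps, but as written the claim ``a triangle in a simple matroid is a rank-$2$ flat with exactly three points'' is not true in general and should not be presented as automatic.
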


\begin{proof}
We observe that, as $r(M) = 4$, every set of four points that form the vertices of a rectangle in Figure \ref{fig:k33} is a cocircuit of $M$. As $M$ has rank four, it has no other triangles apart from the six shown. Now, for all $i$ in $\{x_1,x_2,\dots,x_9\}$, the matroid $M/x_i$ is ternary since it can be obtained from $M(K_4)$ by adding parallel elements. Thus $M$ does not have $U_{2,5}, U_{3,5},$ or $F_7$ as a minor. Moreover, $M$ does not have $F_7^*$ as a minor since we cannot eliminate all of the triangles of $M$ by deleting two elements. We conclude that $M$ is ternary [\ref{bixby0}, \ref{sey0}].

Now $M$ has $\{x_1,x_2,x_4,x_5\}$ as a basis $B$ otherwise $r(M) = 3$. Let $[I_4|A_4]$ be a ternary representation of $M$ with respect to $B$. Scaling the rows and columns of $A_4$ so that the first non-zero entry of each is a one, we get that $A_4$ is as shown in Figure \ref{repnk33} by using the fundamental circuits with respect to $B$ along with the circuit $\{x_3,x_6,x_9\}$, where $u_1$ and $u_2$ are non-zero. Finally, by using the circuit $\{x_7,x_8,x_9\}$, we deduce that $u_2 = 1$ and $u_1 = 1$. Thus $M$ is represented by the matrix $[I_4|A_4]$ over $GF(3)$ where $A_4$ is as shown in Figure~\ref{repnk33} with $u_2 = 1 = u_1$. As $M^*(K_{3,3})$ is a rank-$4$ ternary matroid that has the six triangles indicated in the figure, we deduce that $M \cong M^*(K_{3,3})$.
\end{proof}

\begin{figure}
    \centering
\[
\begin{blockarray}{cccccc}
 & x_3 & x_7 & x_8 & x_6 & x_9 \\
\begin{block}{c(ccccc)}
  x_1     & 1 & 1 & 0 & 0   & 1    \\
  x_2     & 1 & 0 & 1 & 0   & 1    \\
  x_4     & 0 & 1 & 0 & 1   & u_2  \\
  x_5     & 0 & 0 & 1 & u_1 & u_1u_2\\
\end{block}
\end{blockarray}
 \]
 \caption{The matrix $A_4$ in the proof of Lemma~\ref{lem33}.}
   \label{repnk33}
\end{figure}
\begin{figure}
    \centering
    
  \begin{tikzpicture}[tight/.style={inner sep=1pt}, loose/.style={inner sep=.7em}, scale=0.55]

            	\coordinate (a) at (0,0); 
				\coordinate (b) at (0,4);
				\coordinate (c) at (0,6);
				\coordinate (d) at (2,6);
				\coordinate (e) at (6,6);
                \coordinate (f) at (6,2);
                \coordinate (g) at (6,0);
                \coordinate (h) at (2,4);
                \coordinate (i) at (4,4);
                \coordinate (j) at (2,2);
                \coordinate (k) at (4,2);
                \coordinate (l) at (4,0);
                
				\draw (a) -- (b) -- (c) -- (d) -- (e) -- (f) -- (g) -- (a);
                \draw (b) -- (h) -- (i) -- (k) -- (l);
                \draw (d) -- (h) -- (j) -- (f);

                \draw[fill=black] (a) circle (5pt);
                \node at (a)[label=below:\small $x_7$, tight]{};
                
                \draw[fill=black] (b) circle (5pt);
                \node at (b)[label=left:\small $x_{4}$, tight]{};
                
                \draw[fill=black] (c) circle (5pt);
                \node at (c)[label=above :\small $x_{1}$, tight]{};
                
                \draw[fill=black] (d) circle (5pt);
                \node at (d)[label=above:\small $x_{2}$, tight]{};
                
                \draw[fill=black] (e) circle (5pt);
                \node at (e)[label=above right:\small $x_3$, tight]{};
                
                \draw[fill=black] (f) circle (5pt);
                \node at (f)[label= right:\small $x_{11}$, tight]{};
                
                \draw[fill=black] (g) circle (5pt);
                \node at (g)[label=below:\small $x_{12}$, tight]{};
                
                \draw[fill=black] (h) circle (5pt);
                \node at (h)[label=above right:\small $x_5$, tight]{};
                
                \draw[fill=black] (i) circle (5pt);
                \node at (i)[label=above right:\small $x_6$, tight]{};
                \draw[fill=black] (j) circle (5pt);
                \node at (j)[label=above right:\small $x_8$, tight]{};
                \draw[fill=black] (k) circle (5pt);
                \node at (k)[label=above right:\small $x_9$, tight]{};
                \draw[fill=black] (l) circle (5pt);
                \node at (l)[label=below:\small $x_{10}$, tight]{};

			\end{tikzpicture}

    \caption{Some $3$-point lines in a rank-$5$ matroid.}
    \label{fig:k222}
\end{figure}
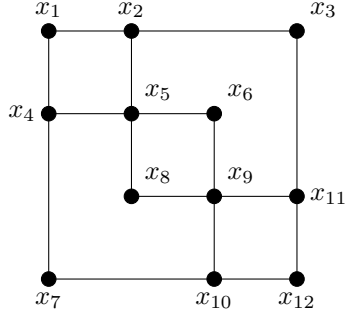
\begin{lemma}\label{tiedown}
    Let $M$ be a $12$-element rank-$5$ simple ternary matroid for which each of the $3$-point lines in Figure $\ref{fig:k222}$ is a triangle of $M$. Then $M$ is isomorphic to $M(K_{2,2,2})$.
\end{lemma}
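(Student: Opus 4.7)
The plan is to mimic the proof of Lemma \ref{lem33}. Since $M$ is already assumed to be ternary, I would skip the ternariness argument and proceed directly to constructing an explicit ternary representation.

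First, I would choose $B = \{x_1, x_2, x_4, x_5, x_{12}\}$ as a candidate basis. It contains none of the eight prescribed triangles. Using the triangles one after another, every element of $E(M) \setminus B$ is seen to lie in $\cl(B)$: for instance, $x_3 \in \cl(\{x_1, x_2\})$, $x_7 \in \cl(\{x_1, x_4\})$, $x_6 \in \cl(\{x_4, x_5\})$, $x_8 \in \cl(\{x_2, x_5\})$, then $x_{11} \in \cl(\{x_3, x_{12}\})$, $x_{10} \in \cl(\{x_7, x_{12}\})$, and finally $x_9 \in \cl(\{x_6, x_{10}\})$. Hence $\cl(B) = E(M)$, giving $r(B) = 5 = |B|$, so $B$ is a basis.

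Second, I would represent $M$ as $[I_5 \mid A_5]$ over $GF(3)$ with respect to $B$. The triangles $\{x_1, x_2, x_3\}$, $\{x_1, x_4, x_7\}$, $\{x_4, x_5, x_6\}$, and $\{x_2, x_5, x_8\}$ are the fundamental circuits of $x_3, x_7, x_6, x_8$, so those four columns of $A_5$ have exactly two nonzero entries each. A further closure computation shows the column of $x_{11}$ has support $\{x_1, x_2, x_{12}\}$, that of $x_{10}$ has support $\{x_1, x_4, x_{12}\}$, and that of $x_9$ has support $\{x_1, x_5, x_{12}\}$ (obtained by intersecting $\cl(\{x_1, x_4, x_5, x_{12}\})$ and $\cl(\{x_1, x_2, x_5, x_{12}\})$, which is forced to have rank three because its rank-four hull would contain all of $B$). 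Scaling rows and columns so that the first nonzero entry of each is $1$ leaves only a handful of $\pm 1$ signs undetermined.

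Third, the four remaining prescribed triangles $\{x_3, x_{11}, x_{12}\}$, $\{x_7, x_{10}, x_{12}\}$, $\{x_6, x_9, x_{10}\}$, and $\{x_8, x_9, x_{11}\}$ each give a linear relation among three columns of $[I_5 \mid A_5]$. The first two determine the columns of $x_{11}$ and $x_{10}$ in terms of those of $x_3$ and $x_7$; the last two give two independent expressions for the column of $x_9$. Equating them is exactly analogous to the deduction $u_1 = u_2 = 1$ at the end of Lemma \ref{lem33} and should pin down all remaining signs uniquely. Since $M(K_{2,2,2})$ is itself a $12$-element rank-$5$ ternary matroid realizing all eight prescribed triangles (its triangles are the $2^3 = 8$ triples obtained by choosing one vertex from each of the three pairs of $K_{2,2,2}$), it has a ternary representation satisfying the same equations, so we obtain the same matrix $A_5$ and conclude $M \cong M(K_{2,2,2})$. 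The main difficulty is verifying that the final scaling pass really does leave no sign ambiguity, which requires direct computation but is tractable thanks to the pair-partition symmetry of the configuration in Figure \ref{fig:k222}.
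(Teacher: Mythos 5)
Your proposal is correct and takes essentially the same route as the paper: build the forced ternary representation $[I_5\mid A_5]$ from the eight triangles (fundamental circuits of $x_3,x_7,x_6,x_8$, then the remaining columns as spans of earlier ones, then the sign-chase, which does indeed close up with no ambiguity) and identify the result with a representation of $M(K_{2,2,2})$. The only differences are cosmetic: you use the basis $\{x_1,x_2,x_4,x_5,x_{12}\}$ and extract the zero entries of the columns of $x_{10},x_{11},x_9$ directly from triangle spans and simplicity, whereas the paper uses the basis $\{x_1,x_2,x_4,x_5,x_9\}$ and first derives the six ``rectangle'' $4$-cocircuits to place the zeros.
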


\begin{proof}
    Since $r(M) = 5$, the triangles of $M$ imply that $\{x_1,x_2,x_4,x_5\},$ \linebreak    $\{x_1,x_3,x_7,x_{12}\}$, $\{x_2,x_3,x_8,x_{11}\},$ $\{x_4,x_6,x_7,x_{10}\}$,  $\{x_5,x_6,x_8,x_9\},$  and \linebreak $ \{x_9,x_{10},x_{11},x_{12}\}$  are cocircuits. Observe that these six sets coincide with the sets of corners of rectangles in Figure \ref{fig:k222}. Moreover each such set must be an independent set in $M$. We now construct a ternary representation $[I_5|A_5]$ for $M$. Let $\{x_1,x_2,x_4,x_5,x_9\}$ be the basis $B$ of $M$.

We shall scale the matrix $A_5$ so that the first non-zero entry of each column is a one. We also scale rows $2$--$5$ so that each has its first non-zero entry equal to one. The fundamental circuits of $x_3,x_7,x_8$, and $x_6$ with respect to $B$ imply that we may assume the first four columns are as shown where $u_1 \not= 0$. The cocircuits $\{x_1,x_3,x_7,x_{12}\},\{x_2,x_3,x_8,x_{11}\},$ and $\{x_4,x_6,x_7, x_{10}\}$ determine the first three rows of $x_{10}, x_{11},$ and $x_{12}$. The remaining two rows of $x_{10},x_{11},$ and $x_{12}$ are unknown. We label their entries as $u_2,u_3,\dots, u_6,$ and $u_7$ noting that these entries may be zero. The triangle $\{x_6,x_9,x_{10}\}$ implies that $u_1 = u_2$. The triangle $\{x_8,x_9,x_{11}\}$ implies that $u_3 = 1$. The triangle $\{x_7,x_{10},x_{12}\}$ implies that $u_4 = -u_2$ and $u_7 = -u_5$. The triangle $\{x_3,x_{11},x_{12}\}$ implies that $u_2 = 1$ and $u_6 = u_5$. Because the first non-zero entry of row $5$ is one, the matrix $A_5$ is as shown in Figure \ref{repk222-2}.

            \begin{figure}
    \centering
\[
\begin{blockarray}{cccccccccc}
 & x_3 & x_7 & x_8 & x_6 & x_{10}  & x_{11} & x_{12}\\
\begin{block}{c(ccccccccc)}
  x_1     & 1 & 1 & 0 & 0   & 0    &  0 &  1  \\
  x_2     & 1 & 0 & 1 & 0   & 0    &  1 &  0  \\
  x_4     & 0 & 1 & 0 & 1   & 1    &  0 & 0 \\
  x_5     & 0 & 0 & 1 & u_1 & u_2  & u_3& u_4 \\
  x_9     & 0 & 0 & 0 & 0   & u_5    &  u_6 &  u_7 \\
\end{block}
\end{blockarray}
 \]
 \caption{Building a ternary representation for $M$.}
    \label{repk222-1}
\end{figure}
\begin{figure}
    \centering
\[
\begin{blockarray}{cccccccccc}
 & x_3 & x_7 & x_8 & x_6 & x_{10}  & x_{11} & x_{12}\\
\begin{block}{c(ccccccccc)}
  x_1     & 1 & 1 & 0 & 0 & 0 & 0 & 1  \\
  x_2     & 1 & 0 & 1 & 0 & 0 & 1 & 0  \\
  x_4     & 0 & 1 & 0 & 1 & 1 & 0 & 0 \\
  x_5     & 0 & 0 & 1 & 1 & 1 & 1 & -1 \\
  x_9     & 0 & 0 & 0 & 0 & 1 & 1 & -1 \\
\end{block}
\end{blockarray}
 \]
 \caption{A ternary representation for $M$.}
    \label{repk222-2}
\end{figure}

Let $K_{2,2,2}$ be labeled as in Figure \ref{fig:graphk222}. We see that the eight triangles in this graph coincide with the eight $3$-point lines in Figure \ref{fig:k222}. Since $M(K_{2,2,2})$ is a simple ternary $12$-element rank-$5$ matroid and such a matroid with the specified eight triangles has the ternary representation $[I_5|A_5]$ where $A_5$ is as shown in Figure \ref{repk222-2}. We deduce that $M(K_{2,2,2}) \cong M[I_5|A_5]$, and the lemma is proved.
\end{proof}

\begin{figure}
    \centering
    \begin{tikzpicture}[tight/.style={inner sep=1pt}, loose/.style={inner sep=.7em}, scale = 0.4]
        \coordinate (bl) at (0,0);
        \coordinate (br) at (8,0);
        \coordinate (tr) at (8,8);
        \coordinate (tl) at (0,8);
        \coordinate (mid) at (4,4);
        \coordinate (top) at (4,12);

        \draw[fill=black] (bl) circle (6pt);
        \draw[fill=black] (tr) circle (6pt);
        \draw[fill=black] (br) circle (6pt);
        \draw[fill=black] (tl) circle (6pt);
        \draw[fill=black] (mid) circle (6pt);
        \draw[fill=black] (top) circle (6pt);
                \draw (bl) edge["$x_7$", tight]  (tl) 
                (tl) edge["$x_3$", tight] (tr)
                (tr) edge["$x_8$"] (br)
                (br) edge["$x_6$"] (bl)
                (mid) edge["$x_{10}$", tight] (bl)
                (mid) edge["$x_{11}$", tight] (tr)
                (br) edge["$x_9$", tight] (mid)
                (tl) edge["$x_{12}$", tight] (mid)
                (tr) edge["$x_2$", tight] (top)
                (top) edge["$x_1$", tight] (tl);

        \draw (top) to[bend left = 75, "$x_5$"] (br);
        \draw (bl) to[bend left = 75, "$x_4$"] (top);
        
    \end{tikzpicture}
    \caption{$K_{2,2,2}$}
    \label{fig:graphk222}
\end{figure}

\section{Structural Lemmas for Ternary Matroids}\label{Sec:Ternary}
This section contains the core of the proof of Theorem \ref{maintheorem:ternary}. Recall that $\mathcal{M}_4$ is the class of simple matroids in which every cocircuit has size at least four.

Because this section is long, we begin with an outline of the proof of Theorem \ref{maintheorem:ternary}. Let $M$ be a ternary minor-minimal matroid in $\mathcal{M}_4$ that is not isomorphic to $F_7^-,P_7,M^*(K_{3,3}),Q_9, M(K_5), M(K_{2,2,2}),$ or $H_{12}$. By Lemma~\ref{minCond}, every element of $M$ is in a triangle and a $4$-cocircuit. The proof strategy for Theorem \ref{maintheorem:ternary} involves analyzing how the many triangles and $4$-cocircuits of $M$ interact. We begin by considering what happens when $M$ has five of the six triangles in Figure \ref{fig:k33} as circuits. We then show, in Lemma~\ref{MainTheorem:2conn}, that $M$ must be $3$-connected. Lemma~\ref{No4PtLine} shows that $M$ has no $4$-point lines, and Lemmas~\ref{4CoCctNoTriangle} and \ref{4CocctInd} show that every $4$-cocircuit is independent. Next we show that $M$ cannot have two $4$-cocircuits contained in the union of two disjoint triangles. Lemmas~\ref{newton} and \ref{NoThreeTriangles} show that no element of $M$ is in more than two triangles. Lemmas~\ref{3tri} and \ref{ring} show that $M$ must have an element that is in more than one triangle. Lemmas~\ref{mrsub1}--\ref{new-1} identify and analyze an infinite family of matroids in $\mathcal{M}_4$ the first two members of which are $M(K_5)$ and $M(K_{2,2,2})$ and subsequent members of which have one of these two matroids as a minor. Lemmas \ref{no222},  \ref{new3}, \ref{new4}, and \ref{last} build from a particular $4$-cocircuit containing an element that is two triangles to get one of the forbidden possibilities for $M$. We now implement this strategy.

\begin{lemma}\label{newboy}
    Let $M$ be a rank-$4$ simple matroid having ground set $\linebreak \{x_1,x_2,\dots, x_9\}$. Suppose that $M$ has $\{x_1,x_2,x_3\}, \{x_4,x_5,x_6\},\ \{x_7,x_8,x_9\}, \linebreak\{x_1,x_4,x_7\}$, and  $\{x_3,x_6,x_9\}$ as triangles. Then $M \cong M^*(K_{3,3})$ or $M$ has $U_{2,5}$ or $P_7$ as a minor.
\end{lemma}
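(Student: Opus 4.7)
My plan is to split on whether $\{x_2, x_5, x_8\}$ is a triangle of $M$. If it is, then all six $3$-point lines of Figure~\ref{fig:k33} are triangles of $M$, and Lemma~\ref{lem33} gives $M \cong M^*(K_{3,3})$ at once. The remaining task is to show that when $\{x_2, x_5, x_8\}$ is independent, $M$ has $U_{2,5}$ or $P_7$ as a minor.

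I begin by establishing the ambient plane structure. For any two distinct row triangles $T_i, T_j$ with $i, j \in \{1,2,3\}$, I claim $r(T_i \cup T_j) = 4$: otherwise, propagating coplanarity through the column triangles $T_4$ and $T_5$ forces all nine elements into a single plane, contradicting $r(M) = 4$. The same argument shows $r(T_4 \cup T_5) = 4$, and so each closure $\cl(T_i \cup T_j)$ with $i \in \{1,2,3\}$ and $j \in \{4,5\}$ is a rank-$3$ flat containing exactly the five elements of $T_i \cup T_j$. Setting $H = \cl(\{x_2, x_5, x_8\})$, a rank-$3$ flat, I analyze $H \cap E(M)$: any extra element of $H$ forces the rest of its row into $H$ via the corresponding row triangle, so the extras come in pairs $\{x_1, x_3\}$, $\{x_4, x_6\}$, $\{x_7, x_9\}$; and if two such pairs were in $H$, the column triangles would drag in a third, giving $H = E(M)$. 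Hence at most one row pair lies in $H$.

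Sub-case 2a: some row pair lies in $H$. By the row symmetry of the hypothesis (with analogous arguments contracting $x_5$ or $x_8$ in the other two cases), I may assume it is $\{x_1, x_3\}$, so $\{x_1, x_2, x_5, x_8\}$ is a $4$-circuit of $M$. I will contract $x_2$ and simplify by identifying $\{x_1, x_3\}$. The triangles $T_2, T_3, T_4$ together with the image $T_5 \mapsto \{x_1, x_6, x_9\}$ and the new triangle $\{x_1, x_5, x_8\}$ coming from the $4$-circuit give five triangles in $\si(M/x_2)$: $x_1$ lies in three of them and every other element in two. The three triangles through $x_1$ partition $\{x_4, x_5, x_6, x_7, x_8, x_9\}$ into the pairs $\{x_4, x_7\}, \{x_5, x_8\}, \{x_6, x_9\}$, while $T_2$ and $T_3$ are the two $3$-point lines not through this central vertex. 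This is the defining incidence pattern of $P_7$, so $\si(M/x_2) \cong P_7$.

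Sub-case 2b: $H = \{x_2, x_5, x_8\}$. I will contract the rank-$2$ pair $\{x_2, x_5\}$. The triangles $T_1$ and $T_2$ produce the parallel classes $\{x_1, x_3\}$ and $\{x_4, x_6\}$, leaving the five classes $\{x_1, x_3\}, \{x_4, x_6\}, \{x_7\}, \{x_8\}, \{x_9\}$. The main obstacle is to verify that no further class merging occurs, that is, no $4$-circuit of $M$ has the form $\{x_2, x_5, y, z\}$ with $y$ and $z$ drawn from distinct classes. Each candidate pair $(y, z)$ is handled by a rank computation that reduces, via the five-element planes from the previous step, to showing that a specific element lies outside a specific plane, and each such non-containment follows directly from the Sub-case~2b hypothesis that $H$ contains no further elements. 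Once these checks are complete, $\si(M/\{x_2, x_5\})$ is a simple rank-$2$ matroid on five elements, and hence is $U_{2,5}$.
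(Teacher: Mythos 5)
Your argument is correct, and it exits through essentially the same two minors as the paper ($P_7$ after one contraction, $U_{2,5}$ after contracting $\{x_2,x_5\}$), but it gets there by a genuinely different route. The paper handles the non-$M^*(K_{3,3})$ case very tersely: it asserts that each of $x_2,x_5,x_8$ lies in a unique triangle, passes to $M/x_5\backslash x_6$, tests whether $\{x_2,x_4,x_8\}$ is a triangle there (giving $P_7$), and otherwise takes $M/x_2,x_5\backslash x_3,x_6\cong U_{2,5}$. You instead first work out the flat structure of the nine-point configuration --- $r(T_i\cup T_j)=4$ for any two row triangles and for the two column triangles, the resulting five-point planes $\cl(T_i\cup T_j)$, and the analysis of $H=\cl(\{x_2,x_5,x_8\})$ --- and split on whether $H$ absorbs a row pair; note that your dichotomy is the paper's triangle test in disguise, since $\{x_2,x_4,x_8\}$ being a triangle of $M/x_5\backslash x_6$ is exactly $x_4\in H$. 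What your scaffolding buys is the justification the paper compresses: the plane facts are what exclude stray triangles through $x_2,x_5,x_8$, and they are exactly what is needed to see that $\{x_1,x_2,x_5,x_8\}$ is a genuine $4$-circuit (each of $\{x_1,x_2,x_5\},\{x_1,x_2,x_8\},\{x_1,x_5,x_8\}$ would put a point on a five-point plane or force $r(T_1\cup T_2)\le 3$), that $\si(M/x_2)$ has seven points with $\{x_1,x_3\}$ its only nontrivial parallel class, and, in sub-case 2b, that no classes merge. Two points you should make explicit when writing this up: merging in $M/\{x_2,x_5\}$ can also arise from a triangle $\{x_2,y,z\}$ or $\{x_5,y,z\}$, not only from a $4$-circuit, though the rank computations you describe (several of which rest on $r(T_1\cup T_2)=4$ rather than on the sub-case 2b hypothesis) dispose of both at once; and the step ``five triangles in the $P_7$ pattern, hence $\si(M/x_2)\cong P_7$'' needs the observation that any further collinearity would either merge two of the five lines, collapsing the rank-$3$ matroid to rank $2$, or create a new $3$-point line all of whose pairs are uncovered, which is combinatorially impossible --- the paper makes the same leap, so this is polish rather than a gap.
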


\begin{proof}
   Observe that if $\{x_2,x_5,x_8\}$ is a triangle of $M$ then, by Lemma~\ref{lem33}, $M \cong M^*(K_{3,3})$. Thus may assume that each of $x_2, x_5,$ and $x_8$ is in a unique triangle of $M$. Now consider $M / x_5 \backslash x_6$.  This has $\{x_1,x_2,x_3\}, \{x_7,x_8,x_9\}, \linebreak \{x_1,x_4,x_7\}$, and $\{x_3,x_4,x_9\}$ as triangles. We may assume that $\{x_2,x_4,x_8\}$ is not a triangle, otherwise this minor is isomorphic to $P_7$. It follows that $M/x_2,x_5\backslash x_3, x_6 \cong U_{2,5}$. 
\end{proof}

\begin{lemma}\label{MainTheorem:2conn}
    Let $M$ be a minor-minimal ternary matroid in $\mathcal{M}_4$. Then $M$ is $3$-connected or $M \cong H_{12}$.
\end{lemma}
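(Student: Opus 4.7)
Proof proposal. The plan is to apply Lemma \ref{2Sum2Mat} to decompose $M$ as a $2$-sum $M = M_1 \oplus_2 M_2$ and then show that each summand is isomorphic to $O_7$ with $p$ as its apex, yielding $M \cong O_7 \oplus_2 O_7 = H_{12}$.

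The first step is to extract constraints on each $M_i$. Using the cocircuit description of a $2$-sum and the assumption $M \in \mathcal{M}_4$, every cocircuit of $M_i$ avoiding $p$ has size at least $4$, and for any cocircuits $C_1^*, C_2^*$ through $p$ in $M_1, M_2$, we have $|C_1^*| + |C_2^*| \geq 6$. Since $M_i \setminus p$ coincides with the simple proper minor $M \setminus (E(M_{3-i}) - p)$, minor-minimality of $M$ forces $M_i \setminus p \notin \mathcal{M}_4$, so $M_i$ has a cocircuit of size at most $4$ through $p$. Using Lemma \ref{minCond} and an analysis of triangles in the $2$-sum (a triangle obtained by combining circuits through $p$ would require those circuits to total size $5$, impossible since simple $M_i$ means they have size at least $3$), every element of $E(M_i) - p$ lies in a triangle of $M_i$ avoiding $p$.

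The key reduction is: if $M_i$ contains a triangle $\{p,a,b\}$ through $p$, then since $\{p,b\}$ is parallel in $M_i / a$, the minor $M / a \setminus (E(M_i) - \{a,p,b\})$ equals $M_{3-i} \oplus_2 U_{1,2} \cong M_{3-i}$, a simple proper minor of $M$. Minor-minimality forces $M_{3-i} \notin \mathcal{M}_4$, so $M_{3-i}$ has a triad, which must contain $p$ (else it is a triad of $M$, contradicting $M \in \mathcal{M}_4$). I then argue that each $M_i$ itself has a triangle through $p$: in the rank-$3$ case with triad $\{p,x,y\}$ through $p$, the complement $E(M_i) - \{p,x,y\}$ is a $4$-line, and the triangle condition on $x$ and $y$ combined with orthogonality with the triad forces the existence of a triangle $\{x,y,z\}$ avoiding $p$; then a ternary representability argument (embedding the configuration in $PG(2,3)$) shows that at least one line through $p$ must also pass through $x$ or $y$, yielding a triangle $\{p,x,t\}$ or $\{p,y,t\}$ through $p$. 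The case where $M_i$'s smallest cocircuit through $p$ is a $4$-cocircuit (rather than a triad) and the case $r(M_i) \geq 4$ are ruled out by a parallel analysis.

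Finally, each $M_i$ contains a triad $\{p,x,y\}$ and a triangle $\{p,x,t\}$ through $p$ (sharing two elements by triangle--triad orthogonality), a triangle $\{x,y,z\}$ avoiding $p$, and a $4$-line $E(M_i) - \{p,x,y\}$; this combinatorial structure, together with ternary representability and $3$-connectedness, uniquely identifies $M_i$ as $O_7$ with $p$ as the apex. Hence $M \cong O_7 \oplus_2 O_7 = H_{12}$. The main obstacle is the ternary representability argument showing $p$ must lie in a triangle of $M_i$, together with exhausting the possible configurations to rule out non-$O_7$ matroids (including higher-rank candidates); the required case analysis is the technically demanding part of the proof.
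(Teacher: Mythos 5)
Your opening reductions are sound and mirror the paper's own first moves: Lemma \ref{2Sum2Mat} gives $M = M_1 \oplus_2 M_2$; any triangle of $M$ meeting $E(M_i)-p$ is a triangle of $M_i$ avoiding $p$ (a cross circuit would force a $2$-circuit through $p$ in a simple summand); cocircuits of $M_i$ avoiding $p$ are cocircuits of $M$; and your triangle-through-$p$ trick does exhibit $M_{3-i}$ as a minor of $M$. In fact you do not need the triangle for that last point: each $M_i$ is always a minor of the $2$-sum, so by minimality each $M_i$ has a small cocircuit, it must be a triad, and every such triad contains $p$. This both strengthens your conclusion and removes your extra case in which the smallest cocircuit of $M_i$ through $p$ has size four.

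The genuine gap is that the heart of the lemma --- that the only surviving possibility for each summand is $O_7$ with $p$ in its triad --- is asserted rather than proved. Concretely: (i) your claim that in the rank-$3$ case $E(M_i)-\{p,x,y\}$ is a $4$-point line presupposes $|E(M_i)|=7$, which is part of what has to be shown; (ii) the existence of a triangle of $M_i$ containing $p$ is only gestured at via an unspecified $PG(2,3)$ embedding argument (the paper obtains it quickly: $\co(M_1\backslash p)$ has a series pair, so by Bixby's Lemma $\si(M_1/p)$ is $3$-connected, and if $p$ were in no triangle then $M_1/p$ would be a simple proper minor of $M$ in $\mathcal{M}_4$, contradicting minimality); (iii) the cases $r(M_i)\geq 4$ and ``smallest cocircuit through $p$ of size four'' are dismissed as ``a parallel analysis,'' yet this is exactly where nearly all of the work in the paper lies --- roughly two pages and ten sublemmas combining orthogonality with carefully tracked $4$-cocircuits, repeated $\lambda$-computations, the $U_{3,5}$-restriction-of-$M^*$ obstruction to ternarity, Lemma \ref{minCond}, and, crucially, the observation that if $M_1$ has an $O_7$-minor with $p$ in its triad, then $2$-summing that minor with $M_2$ contradicts minimality unless $M_1\cong O_7$. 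There is no reason to expect the higher-rank analysis to parallel the rank-$3$ one, and your final ``uniqueness'' step identifying $M_i$ as $O_7$ from the listed configuration is likewise unargued. So although your global strategy coincides with the paper's (decompose as a $2$-sum and force each part to be $O_7$ with $p$ in its triad), the proposal as written omits the entire elimination argument that constitutes the proof and cannot be accepted without it.
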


\begin{proof}
    Assume that the result fails. By Lemma~$\ref{2Sum2Mat}$, $M = M_1 \oplus_2 M_2$, where $M_1$ and $M_2$ are $3$-connected matroids and $p$ is the basepoint of the $2$-sum. We may assume that $M_1$ or $M_2$, say $M_1$, is not isomorphic to $O_7$ having $p$ in its triad otherwise $M \cong H_{12}$. Observe that if $M_1$ has no triads, then the minimality of $M$ is contradicted. Thus every triad of $M_1$ contains $p$ otherwise $M$ has a triad. Let $\{x_1,x_3,p\}$ be a triad $T^*$ of $M_1$. Thus $M$ has a cocircuit $C^*$ that meets $E(M_1)$ in $\{x_1,x_3\}$. As Lemma~$\ref{minCond}$ implies that every element of $M$ is in a triangle, $M$ has a triangle containing $x_1$. By orthogonality with $C^*$, this triangle contains $x_3$ and an element $x_2$ of $E(M_1) - T^*$. Observe that $\{x_1,x_3\}$ is a cocircuit of $M_1 \backslash p$, so $\co(M_1 \backslash p)$ has a $2$-circuit. Thus $\co(M_1 \backslash p)$ is not $3$-connected, so, by Bixby's Lemma~[\ref{bixby}] (see also [\ref{James}, Lemma~8.7.3]), $\si(M_1/p)$ is $3$-connected. If neither $x_1$ nor $x_3$ is in a triangle of $M_1$ other than $\{x_1,x_2,x_3\}$, then $M_1 / p$ is $3$-connected and has no cocircuit of size less than four. This is a contradiction since $M_1 / p$ is a minor of $M$. Therefore, we may assume that $M_1$ has $\{p,x_1,x_4\}$ as a triangle for some element $x_4$. Then, either the only triangle containing $p$ in $M_1$ is $\{p,x_1,x_4\}$, or $M_1$ has another triangle containing $p$. By orthogonality, this triangle must be $\{p,x_3,x_5\}$ for some element $x_5$.

    We first assume that $\{p,x_3,x_5\}$ is a triangle of $M_1$. Since the intersection of $\cl_{M_1}(\{p,x_1,x_3\})$ and $E(M) - \{p,x_1,x_3\}$ contains $\{x_2,x_4,x_5\}$, this set is a triangle of $M$.  By Lemma~$\ref{minCond}$, $M$ has a $4$-cocircuit $C^*$ containing $x_2$. By symmetry and orthogonality, $x_4 \in C^*$. Furthermore, by orthogonality with a circuit  of $M$ that meets $E(M_1)$ in $\{x_1,x_4\}$, we deduce that $x_1 \in C^*$. Now suppose that $M_1$ has a point in $\cl_{M_1}(\{x_2,x_4\}) - \{x_2,x_4,x_5\}$. Then $M_1$ has an $O_7$-minor $M_1'$ having $p$ in the unique triad. Since $M_1' \oplus_2 M_2$ has no cocircuits of size less than four, we deduce that $M_1 \cong M_1' \cong O_7$, a contradiction. We may now assume that $C^* = \{x_1,x_2,x_4,x_6\}$ where $x_6 \not\in \cl_{M_1}(\{x_1,x_2,x_4\})$. By Lemma~$\ref{minCond}$, $x_5$ is in a $4$-cocircuit $D^*$ in $M$. By orthogonality and symmetry with $C^*$, we deduce that $x_3 \in D^*$. If $x_2 \not \in D^*$, then $x_4 \in D^*$ and $x_1 \in D^*$. Thus $D^* = \{x_1,x_3,x_4,x_5\}$. Then, because $\{p,x_1,x_2,\dots,x_5\}$ contains the cocircuits $D^*$ and $\{p,x_1,x_3\}$ of $M_1$, we deduce that $\lambda_{M_1}(\{p,x_1,x_2,x_3,x_4,x_5\}) \leq 3 + (6-2) - 6 = 1$. This is a contradiction as it implies that $x_6$ is a coloop of $M_1$. Therefore, we may assume that $x_2 \in D^*$ and neither $x_1$ nor $x_4$ is in $D^*$.  By Lemma~$\ref{minCond}$, $x_6$ is in a triangle of $M_1$, so $\{x_4,x_6,x_7\}$ or $\{x_2,x_6,x_7\}$ is a triangle of $M_1$ for some element $x_7$. 

    \begin{sublemma}\label{2conn:x_2x_6x_7notATriangle}
       When $\{p,x_3,x_5\}$ is a triangle, $M_1$ has no triangle containing $\{x_2,x_6\}$.
    \end{sublemma}
    Assume that $\{x_2,x_6,x_7\}$ is a triangle. By orthogonality between this triangle and $D^*$, we see that $D^* = \{x_2,x_3,x_5,x_6\}$ or $D^* = \{x_2,x_3,x_5,x_7\}$. Since $M_1$ has $D^*$, $C^*$, and $\{p,x_1,x_3\}$ as cocircuits, $$\lambda_{M_1}(\{p,x_1,x_2,\dots,x_7\}) \leq 4 + (8-3) - 8 = 1.$$ As $M$ is $3$-connected, $|E(M) - \{p,x_1,x_2,\dots,x_7\}| \leq 1$, so $M_1$ has a cocircuit containing $\{x_6,x_7\}$ that has size at most three and that does not contain $p$, a contradiction. Thus \ref{2conn:x_2x_6x_7notATriangle} holds.

    \begin{sublemma}\label{2conn:x_4x_6x_7notATriangle}
       When $\{p,x_3,x_5\}$ is a triangle, $M_1$ has no triangle containing $\{x_4,x_6\}$. 
    \end{sublemma}
    Assume that $\{x_4,x_6,x_7\}$ is a triangle of $M$. Then $D^* = \{x_2,x_3,x_5,x_8\}$ for some element $x_8$. By orthogonality, $x_8$ is neither $x_6$ nor $x_7$. Thus $x_8 \not\in \{p,x_1,x_2,\dots,x_7\}$. Consider $M/x_6\backslash x_7$. By \ref{2conn:x_2x_6x_7notATriangle} and orthogonality, we see that $\{x_4,x_6,x_7\}$ is the only triangle of $M$ containing $x_6$, so $M/x_6\backslash x_7$ is simple. Therefore, it has a triad $T^*$, and $M$ has $T^* \cup x_7$ as a cocircuit that avoids $x_6$. Then $x_4 \in T^*$ so, by orthogonality, $x_1 \in T^*$. As $\{x_1,x_2,x_3\}$ and $\{x_2,x_4,x_5\}$ are triangles, it follows by orthogonality that $\{x_1,x_2,x_4,x_7\}$ is a cocircuit. However, as $\{x_1,x_2,x_4,x_6\}$ is a cocircuit, $M^*|\{x_1,x_2,x_4,x_6,x_7\} \cong U_{3,5}$, a contradiction. Thus \ref{2conn:x_4x_6x_7notATriangle} holds.

    By \ref{2conn:x_2x_6x_7notATriangle} and \ref{2conn:x_4x_6x_7notATriangle}, $\{p,x_3,x_5\}$ is not a triangle of $M_1$. Thus the only triangle containing $p$ in $M_1$ is $\{p,x_1,x_4\}$. As $x_4$ is in a triangle of $M$, it follows that 
    \begin{sublemma}\label{4.2.2half}
         $M$ has $\{x_2,x_4,x_5\}$ as a triangle for some $x_5$ in $\cl_{M_1}(\{x_1,x_2,x_4\})$, or $M$ has $\{x_4,x_5,x_6\}$ as a triangle for some $x_5$ and $x_6$ not in $\cl_{M_1}(\{x_1,x_2,x_4\})$. 
    \end{sublemma}

   Next we eliminate the first possibility.

    \begin{sublemma}\label{2conn:notx_2x_4x_5}
        $\{x_2,x_4,x_5\}$ is not a triangle of $M$.
    \end{sublemma}

        Assume that $\{x_2,x_4,x_5\}$ is a triangle. Let $C^*$ be a $4$-cocircuit of $M$ containing $x_4$. Then, by orthogonality, $C^*$ contains $x_2$ or $x_5$. Moreover, as $\{p,x_1,x_4\}$ is a circuit of $M_1$, it follows that $M$ has a circuit that meets $E(M_1)$ in $\{x_1,x_4\}$. We deduce, by orthogonality, that $x_1\in C^*$. Thus $C^*$ contains $\{x_1,x_2,x_4\}$ or $\{x_1,x_4,x_5\}$. Suppose $C^* \subseteq \{x_1,x_2,x_3,x_4,x_5\}$. Then $$\lambda_{M_1}(\{p,x_1,x_2,\dots,x_5\}) \leq 3+ (6-2) -6 = 1.$$ Since $M$ has no triads, $|E(M_1)| \geq 7$. Thus, as $M_1$ is $3$-connected, it has exactly one element, say $e$, not in $\{p,x_1,x_2,\dots,x_5\}$. Since $M_1$ has a $\{p,x_1,x_3\}$ as a triad, $e \in \cl_{M_1}(\{x_2,x_4\})$. Thus $M_1$ and hence $M$ has a $U_{2,5}$-minor, a contradiction. We deduce that $C^*$ contains an element $x_6$ that is not in $\{p,x_1,x_2,\dots,x_5\}$. Then $C^*$ does not contain $\{x_1,x_4,x_5\}$ by orthogonality with $\{x_1,x_2,x_3\}$, so $C^* = \{x_1,x_2,x_4,x_6\}$. Now $C^* \cap \{p,x_1,x_2,\dots,x_5\}$ is a union of cocircuits and hence is a cocircuit of $M_1|\{p,x_1,x_2,\dots,x_5\}$. But $\{x_1,x_2,x_4\}$ is not a cocircuit of the last matroid otherwise $M_1$ has $\{p,x_3,x_5\}$ as a triangle, a contradiction. Thus \ref{2conn:notx_2x_4x_5} holds.

    Following \ref{4.2.2half}, the rest of the proof of this lemma will be devoted to proving the following.
    
    \begin{sublemma}\label{2conn:notx_4x_5x_6}
       $M$ has no triangle of the form $\{x_4,x_5,x_6\}$ where $x_5$ and $x_6$ are not in $\cl_{M_1}(\{x_1,x_2,x_4\})$.
    \end{sublemma}
    
    Assume that $M$ does have such a triangle. Then, as $x_4$ is in a $4$-cocircuit of $M$, we may assume that either $\{x_1,x_2,x_4,x_5\}$ or $\{x_1,x_3,x_4,x_5\}$ is a cocircuit of $M$. Thus $$\lambda_{M_1\backslash x_5}(\{p,x_1,x_2,x_3,x_4\}) \leq 3 + (5-2) -5 \leq 1.$$ Suppose $\co(M_1\backslash x_5)$ is $3$-connected. Then  $\{p,x_1,x_2,x_3,x_4\}$ or $E(M_1\backslash x_5) - \{p,x_1,x_2,x_3,x_4\}$ is a series class of $M_1 \backslash x_5$. In each case, since we must have that $|E(M_1)| \geq 9$, we get that $M_1$ has a triad avoiding $p$, a contradiction. Thus, by Bixby's Lemma, $\si(M/x_5)$ is $3$-connected. Observe that $\si(M_1/x_5)$ is $M_1/x_5\backslash x_4$ or is $M_1 /x_5\backslash x_2,x_4$ where the latter occurs when $M_1$ has $\{x_2,x_5,x_7\}$ as a triangle and $\{x_1,x_2,x_4,x_5\}$ as a cocircuit where $x_7 \not \in \{p,x_1,x_2,\dots,x_6\}$.
    
    Continuing the proof of \ref{2conn:notx_4x_5x_6}, we show next that 
    \begin{sublemma}\label{subsub}
        $M_1/x_5 \backslash x_4$ is not simple.
    \end{sublemma}
    
    Assume that $M_1 /x_5 \backslash x_4$ is simple. Then $M/x_5\backslash x_4$ has a triad $T^*$, otherwise the choice of $M$ is contradicted. Then $M$ has $T^* \cup x_4$ as a $4$-cocircuit that avoids $x_5$. Thus, by orthogonality, $x_1 \in T^*$, $x_6 \in T^*$, and $x_2$ or $x_3 \in T^*$. Hence $\{x_1,x_2,x_4,x_6\}$ or $\{x_1,x_3,x_4,x_6\}$ is a cocircuit of $M$. We cannot have $\{x_1,x_2,x_4,x_6\}$ and $\{x_1,x_2,x_4,x_5\}$ as cocircuits of $M$, or both $\{x_1,x_3,x_4,x_5\}$ and $\{x_1,x_3,x_4,x_6\}$ as cocircuits of $M$, otherwise $M^*$ has $U_{3,5}$ as a restriction, a contradiction. Thus either both $\{x_1,x_2,x_4,x_5\}$ and $\{x_1,x_3,x_4,x_6\}$ are cocircuits of $M$, or both  $\{x_1,x_2,x_4,x_6\}$   and $\{x_1,x_3,x_4,x_5\}$ are cocircuits of $M$. Using either of these pairs of $4$-cocircuits, we eliminate $x_4$ to get a cocircuit contained in $\{x_1,x_2,x_3,x_5,x_6\}$. By orthogonality with a circuit of $M$ containing $\{x_1,x_4\}$ and elements of $E(M_2) - p$, we deduce that $\{x_2,x_3,x_5,x_6\}$ is a cocircuit of $M$. As $M_1$ also has $\{p,x_1,x_3\}$ and either $\{x_1,x_2,x_4,x_5\}$ or $\{x_1,x_3,x_4,x_5\}$ as a cocircuit, $$\lambda_{M_1}(\{p,x_1,x_2,\dots,x_6\}) \leq 4 + (7-3) - 7 = 1.$$ This is a contradiction since it implies that $M_1$ has a triad containing $\{x_5,x_6\}$ that avoids $p$.  We conclude that \ref{subsub} holds.

    We now know that $\si(M_1/x_5)$ is $M_1/x_5\backslash x_2,x_4$, and $M_1$ has $\{x_2,x_5,x_7\}$ as a triangle and $\{x_1,x_2,x_4,x_5\}$ as a cocircuit. Now $M_1|\{p,x_1,x_2,\dots,x_7\}$ has rank four and has $\{x_1,x_2,x_4,x_5\}$ as a cocircuit. Thus $r(\{p,x_3,x_6,x_7\}) = 3$, so

    \begin{sublemma}\label{new4.2.6}
       $\{p,x_3,x_6,x_7\}$ is a cocircuit of $M_1$.
    \end{sublemma}

    We show next that

    \begin{sublemma}\label{suburb}
         $M_1$ has no triangle containing $\{x_6,x_7\}$.
    \end{sublemma}

    Assume that $M_1$ has a triangle $\{x_6,x_7,e\}$. Since $M_1$ has no triangle containing $\{p,x_3\}$, we deduce that $e \not\in \cl_{M_1}(\{p,x_1,x_3\})$. Then the simplification of $(M|\{p,x_1,x_2,\dots,x_7,e\}) / e $ is $O_7$ and has $\{p,x_1,x_3\}$ as a triad. Replacing $M_1$ by this copy of $O_7$ contradicts the minimality of $M$. Thus \ref{suburb} holds.

    We now show that 

    \begin{sublemma}\label{subtwo}
        $r(M_1) \geq 5$.
    \end{sublemma}

    Suppose that $r(M_1) \leq 4$. Then $r(M_1) = 4$. The cocircuits $\{p,x_1,x_3\}$ and $\{x_1,x_2,x_4,x_5\}$ of $M_1$ imply that every element of $M_1$ not in these cocircuits is in $\cl_{M_1}(\{x_6,x_7\})$. As $\{x_2,x_3,x_7\}$ is not a triad of $M_1$, there is at least one element in $\cl_{M_1}(\{x_6,x_7\}) - \{x_6,x_7\}$, a contradiction to \ref{suburb}. Thus \ref{subtwo} holds.

    Now $M$ has a $4$-cocircuit $C^*_6$ containing $x_6$. We next show that 

    \begin{sublemma}\label{wheres6}
        $C^*_6 = \{x_5,x_6,x_7,x_8\}$ for some element $x_8 \not\in \{p,x_1,x_2,\dots,x_7\}$.
    \end{sublemma}

        By orthogonality, $C^*_6$ contains $\{x_2,x_5,x_6\}, \{x_5,x_6,x_7\}, \{x_1,x_2,x_4,x_6\}$,  or $\{x_1,x_3,x_4,x_6\}$. The third possibility is eliminated because $\{x_1,x_2,x_4,x_5\}$ is a cocircuit. If the fourth possibility holds, then $$\lambda_{M_1}(\{p,x_1,x_2,\dots,x_6\}) \leq 4 + (7-3) - 7 = 1,$$ so $r(M_1) = 4$, a contradiction to \ref{subtwo}. Suppose $C^*_6 \supseteq \{x_2,x_5,x_6\}$. Then, by orthogonality, $C_6^* = \{x_2,x_3,x_5,x_6\}$. Using the last lambda calculation, we again  obtain the contradiction that $r(M_1) = 4$. We conclude that $C_6 ^* = \{x_5,x_6,x_7,x_8\}$ for some element $x_8$. Moreover, by orthogonality, $x_8 \not\in \{p,x_1,x_2,\dots,x_7\}$. Thus \ref{wheres6} holds.

    Recall that $M/x_5 \backslash x_2,x_4$ is $3$-connected. By the minimality of $M$, it follows that $M / x_5 \backslash x_2,x_4$ has a triad $T^*$. Then $T^* \cup x_4, T^* \cup x_2$, or $T^* \cup~\{x_2,x_4\}$ is a cocircuit of $M$ where $\{x_2,x_4,x_5\} \cap T^* = \emptyset$. Suppose $T^* \cup x_4$ is a cocircuit of $M$. Then, by orthogonality, $x_1 \in T^*$ and $x_3 \in T^*$.  Moreover, $x_6 \in T^*$, so $\{x_1,x_3,x_4,x_6\}$ is a cocircuit of $M$. Then $$\lambda_{M_1}(\{p,x_1,x_2,\dots,x_6\}) \leq 4 + (7-3) - 7 = 1.$$ Since $\{x_5,x_6,x_7\}$ is not a triad of $M_1$, we deduce that $M_1$ has an element in $\cl_{M_1}(\{x_6,x_7\}) - \{x_6,x_7\}$, a contradiction to \ref{suburb}. Thus $T^* \cup x_4$ is not a cocircuit of $M$. 

    Next assume that $T^* \cup x_2$ is a cocircuit of $M$. Then $x_7 \in T^*$. As $x_4 \not\in T^*$, we see that $x_1 \not\in T^*$, so $x_3 \in T^*$. Hence $\{x_2,x_3,x_7\}$ is in a $4$-cocircuit $F^*$ of $M$ that avoids $\{x_1,x_4,x_5,x_6\}$. Let $F^*= \{x_2,x_3,x_7,x_9\}$, for some element $x_9$ that is not in $\{p,x_1,x_2,\dots,x_7\}$. Now, by Lemma~\ref{minCond}, $x_9$ is in a triangle $T$. By orthogonality, $T$ contains $\{x_2,x_9\}$, $\{x_3,x_9\}$, or $\{x_7,x_9\}$. In the first case, by orthogonality, $T$ is $\{x_1,x_2,x_9\}$, $\{x_2,x_4,x_9\}$, or $\{x_2,x_5,x_9\}$. By orthogonality with the triad $\{p,x_1,x_3\}$ in $M_1$, it follows that $T \not = \{x_1,x_2,x_9\}$. If $T$ is $\{x_2,x_4,x_9\}$ or $\{x_2,x_5,x_9\}$, then $M /x_6$ has an $O_7$-minor having $p$ in its triad, contradicting the minimality of $M_1$. If $T$ contains $\{x_3,x_9\}$, then, by orthogonality with the triad $\{p,x_1,x_3\}$ in $M_1$, we get that $T = \{p,x_3,x_9\}$, a contradiction. We conclude that $T$ contains $\{x_7,x_9\}$.

    Next we show that 

    \begin{sublemma}\label{9not8}
         $x_8 \not = x_9$.
    \end{sublemma}
    
    Assume that $x_8 = x_9$. Then $M_1$ has a cocircuit $J^*$ such that $$ J^* \subseteq(\{x_2,x_3,x_7,x_8\} \cup \{x_5,x_6,x_7,x_8\} )- \{x_8\}.$$ If $x_7 \in J^*$, then, by orthogonality, $T$ contains an element of $\{x_2,x_3,x_5,x_6\}$. Thus $x_8 \in \cl_{M_1}(\{p,x_1,x_3,x_6\})$, so $$\lambda_{M_1}(\{p,x_1,x_2,\dots,x_8\}) \leq 4 + (9-4) - 9 = 0.$$ Hence $E(M_1) = \{p,x_1,x_2,\dots,x_8\})$. Thus $r(M_1) = 4$, a contradiction to \ref{subtwo}. We conclude that \ref{9not8} holds.

    By orthogonality and \ref{suburb}, $T$ is $\{x_7,x_8,x_9\}$. Then $$\lambda_{M_1}(\{p,x_1,x_2,\dots,x_9\}) \leq 5 + (10-4) - 10 = 1.$$ Therefore  $|E(M) - \{p,x_1,x_2,\dots,x_9\}| \leq 1$. By \ref{subtwo}, $r(M_1) \geq 5$. As \linebreak $r(\{p,x_1,x_2,\dots,x_7\})= 4$, we deduce that $E(M_1) - \{p,x_1,x_2,\dots,x_7\}$ is a triad, a contradiction. We deduce, when $T^* \cup x_2$ is not a cocircuit of $M$.
    
Therefore  we now know that $T^* \cup \{x_2,x_4\}$ is a cocircuit of $M$ where $T^* \cap \{x_2,x_4,x_5\} = \emptyset$. Then, by orthogonality, $x_1 \in T^*$, $x_6 \in T^*$, and $x_7 \in T^*$. Thus $M$ has $\{x_1,x_2,x_4,x_6,x_7\}$ as a cocircuit. As $\{x_1,x_2,x_4,x_5\}$ is also a cocircuit, by eliminating $x_1$, from the union of these two cocircuits, we get a cocircuit contained is $\{x_2,x_4,x_5,x_6,x_7\}$. The triangles $\{x_1,x_2,x_3\}$ and $\{p,x_1,x_4\}$ of $M_1$ mean that $\{x_5,x_6,x_7\}$ is a cocircuit of $M_1$, a contradiction. Thus \ref{2conn:notx_4x_5x_6} holds and, by \ref{4.2.2half}, the lemma~follows.
\end{proof}

We may now focus on a $3$-connected minor-minimal simple ternary  matroid $M$ in $\mathcal{M}_4$.

\begin{lemma}\label{No4PtLine}
     Let $M$ be a $3$-connected minor-minimal ternary matroid in $\mathcal{M}_4$. Then $M$ does not have $U_{2,4}$ as a restriction.
\end{lemma}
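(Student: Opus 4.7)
The plan is to argue by contradiction: suppose $M$ has $L=\{a,b,c,d\}$ as a $U_{2,4}$-restriction. First I will use that $M$ is ternary to conclude $\cl_M(L)=L$, since a further point on the line of $L$ would give a $U_{2,5}$-restriction of $M$, forbidden for ternary matroids. Each $3$-subset of $L$ is then a triangle of $M$, and orthogonality with these four triangles forces every cocircuit $D$ of $M$ to satisfy $|D\cap L|\in\{0,3,4\}$. Next I will observe that $D=L$ is impossible, because it would make $E(M)\setminus L$ a hyperplane, giving $\lambda_M(L)=1$ and contradicting $3$-connectivity (using $|E(M)\setminus L|\geq 2$). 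Hence every $4$-cocircuit of $M$ meeting $L$ has the form $(L\setminus\{y\})\cup\{e\}$ for some $y\in L$ and some $e\notin L$.

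I will then invoke Lemma~\ref{minCond}: every element of $L$ lies in such a $4$-cocircuit. Two short cocircuit-elimination arguments, both using the constraint $|D\cap L|\in\{0,3,4\}$, will yield the key structural facts: (i) for each $3$-subset $T\subseteq L$ at most one $e\notin L$ makes $T\cup\{e\}$ a $4$-cocircuit, and (ii) no two distinct $3$-subsets of $L$ can be extended using the same attachment element $e$ (else elimination of $e$ would force $L$ itself to be a cocircuit). Combining (i), (ii), and Lemma~\ref{minCond} forces at least two distinct $3$-subsets of $L$ to be stabilized by distinct attachments; after relabelling, $M$ has both $\{a,c,d,e\}$ and $\{a,b,d,f\}$ as $4$-cocircuits with $e\neq f$.

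Finally, I will apply Lemma~\ref{minCond} to $e$: it lies in a triangle $T_e$. Orthogonality with $\{a,c,d,e\}$ forces $T_e\cap\{a,c,d\}$ to be nonempty, and $\cl_M(L)=L$ forces the third element of $T_e$ to lie outside $L$. Orthogonality with $\{a,b,d,f\}$ then pins $T_e$ down to a short list of admissible shapes. Running the same reasoning on $f$, and then on the non-$L$ element of $T_e$ (which Lemma~\ref{minCond} places in a further $4$-cocircuit), I will chain cocircuit eliminations among $\{a,c,d,e\}$, $\{a,b,d,f\}$, and the newly produced cocircuits. In every branch the restriction $|D\cap L|\in\{0,3,4\}$, together with (i) and (ii), forces either $L$ itself to be a cocircuit (excluded by $3$-connectivity) or a new $4$-cocircuit $T\cup\{e'\}$ whose attachment $e'$ has already been assigned to a different $3$-subset, contradicting (ii). I expect the case analysis in this last step to be the main obstacle; it closes because the orthogonality restriction together with (i) and (ii) is very restrictive, quickly narrowing each branch to a contradiction.
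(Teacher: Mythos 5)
Your opening moves are sound and run parallel to the paper's: orthogonality with the triangles inside the $4$-point line $L$ does force every cocircuit to meet $L$ in $0$, $3$, or $4$ elements; $L$ itself cannot be a cocircuit; and your facts (i) and (ii) are both true (the paper obtains (i) from a $U_{3,5}$-restriction of $M^*$ and handles the situation in (ii) by a $\lambda$-computation, but your elimination arguments are fine). The genuine gap is the endgame, which you leave as a hoped-for case analysis, and it does not close with the tools you allow yourself (orthogonality, elimination, (i), (ii), and Lemma~\ref{minCond}). Concretely, with cocircuits $\{a,c,d,e\}$ and $\{a,b,d,f\}$, the triangle through $e$ may be $\{a,e,f\}$ (or $\{d,e,f\}$), and this configuration violates none of your constraints: every element of $L\cup\{e,f\}$ already lies in a triangle and in one of the two known $4$-cocircuits, and eliminating $a$ from the two cocircuits need only return the $5$-element cocircuit $\{b,c,d,e,f\}$ or a cocircuit you already have, never a forbidden $4$-cocircuit --- note that elimination does not produce $4$-cocircuits on demand. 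The paper kills exactly this case by connectivity, not orthogonality: the triangle gives $r(L\cup\{e,f\})\le 3$, the two cocircuits give $r^*(L\cup\{e,f\})\le 4$, so $\lambda(L\cup\{e,f\})\le 1$, which contradicts $3$-connectedness only because $|E(M)|\ge 8$; you never establish this bound, and your $|E(M)\setminus L|\ge 2$ is too weak to run any of these $\lambda$-arguments.

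More seriously, in the surviving case where the triangles through $e$ and $f$ are $\{c,e,z\}$ and $\{b,f,w\}$ with $z,w$ new, chaining eliminations among the known cocircuits stalls: the eliminated cocircuits can have five elements or coincide with known ones, so your claimed dichotomy ``$L$ is a cocircuit or an attachment is repeated'' never materializes. At this point the paper must use the minor-minimality of $M$ in a way strictly stronger than Lemma~\ref{minCond}: a $\lambda$-computation shows $\co(M\backslash x_5)$ is not $3$-connected, Bixby's Lemma then gives that $\si(M/x_5)=M/x_5\backslash x_1$ is $3$-connected, and minimality forces this proper minor to have a triad, which lifts to the new cocircuit $\{x_1,x_2,x_3,x_7\}$; only then does the repeated-attachment contradiction (your (i), via $U_{3,5}$ in $M^*$) appear. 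Without some mechanism of this kind for manufacturing a new $4$-cocircuit attached to an already-used $3$-subset of $L$, the local orthogonality/elimination chase you describe cannot reach a contradiction, so as written the proposal has a real gap precisely at its decisive step.
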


\begin{proof}
    Assume that  $M|\{x_1,x_2,x_3,x_4\} \cong U_{2,4}$. Since every cocircuit of $M$ has at least four elements, $|E(M)| \geq 8$. By Lemma~$\ref{minCond}$, we may assume that $M$ has $4$-cocircuits $C^*_1$ and $C^*_2$ such that  $\{x_1,x_2,x_3\} \subseteq C_1^*$ and $\{x_2,x_3,x_4\} \subseteq~C_2^*$. Observe that $C_1^* \not= C_2^*$ otherwise $$\lambda(\{x_1,x_2,x_3,x_4\}) \leq 2 + (4-1) - 4 = 1,$$ so $|E(M)| \leq 5$, a contradiction. Now suppose $M$ has a point $x_5$ such that $C_1^* = \{x_1,x_2,x_3,x_5\}$ and $C_2^* = \{x_2,x_3,x_4,x_5\}$. Then $$\lambda(\{x_1,x_2,x_3,x_4,x_5\}) \leq 3 + (5-2) - 5 =  1.$$ However, this implies that $|E(M)| \leq 6$, a contradiction. Therefore $C_1^* = \{x_1,x_2,x_3,x_5\}$ and $C_2^* = \{x_2,x_3,x_4,x_6\}$ for distinct elements $x_5$ and $x_6$ not in $\{x_1,x_2,x_3,x_4\}$. Now, by Lemma~\ref{minCond}, $x_5$ is in a triangle $T$. If $x_2 \in T$, then $T = \{x_2,x_5,x_6\}$. Thus $\lambda(\{x_1,x_2,x_3,x_4,x_5,x_6\}) \leq 3 + (6-2) -6 =  1$, so $|E(M)| \leq 7$, a contradiction. Therefore neither $\{x_2,x_5\}$ nor $\{x_3,x_5\}$ is in a triangle and we may assume that $\{x_1,x_5,x_7\}$ is a triangle for some element $x_7$ that is not in $\{x_1,x_2,\dots,x_6\}.$ Similarly, we may assume that $\{x_4,x_6,x_8\}$ is a triangle for some element $x_8$ that is not in $\{x_1,x_2,\dots, x_6\}$. Suppose $x_7 = x_8$. Then $\lambda(\{x_1,x_2,\dots,x_7\}) \leq 1$. As $M$ has no triads, it follows that $E(M) - \{x_1,x_2,\dots,x_7\} = \{e\}$ for some element $e$, and $r(M) =~3$. As $E(M) -(C_1^* \cup C_2^*)$ is a rank-one set containing $\{x_7,e\}$, we have a contradiction. Thus $x_7$ and $x_8$ are distinct.

    Now, $\lambda_{M\backslash x_5}(\{x_1,x_2,x_3,x_4,x_6\}) \leq 3 + (5-2) - 5 = 1$. As $M \backslash x_5$ has no $2$-cocircuits, $\co(M\backslash x_5) = M \backslash x_5$ and this matroid is not $3$-connected. Thus, by Bixby's Lemma~[\ref{bixby}], $\si(M/x_5)$ is $3$-connected. Observe that $\si(M/x_5) =M/x_5\backslash x_1$. By the minimality of $M$, there is a triad $T^*$ in $ M/x_5\backslash x_1$ and $T^* \cup x_1$ is a cocircuit of $M$ that avoids $x_5$. Then $x_7 \in T^*$ and two of $x_2,x_3$, and $x_4$ are in $T^*$. Observe that $x_4 \not\in T^*$, otherwise $T^*$ also contains $x_6$ or $x_8$, so $|T^*| \geq 4$, a contradiction. Thus $T^* = \{x_2,x_3,x_7\}$, so $T^* \cup x_1 = \{x_1,x_2,x_3,x_7\}$ is a cocircuit of $M$. However, this implies that $M^*|\{x_1,x_2,x_3,x_5,x_7\} \cong U_{3,5}$. As $M$ is ternary, this is a contradiction.
\end{proof}

\begin{lemma}\label{4CoCctNoTriangle}
     Let $M$ be a minor-minimal ternary matroid in $\mathcal{M}_4$. If $M$ has a $4$-cocircuit $D^*$ that contains a triangle, then $M$ is isomorphic to $P_7$ or $H_{12}$. 
\end{lemma}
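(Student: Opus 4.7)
The plan is to split on the $3$-connectivity of $M$. If $M$ is not $3$-connected, Lemma~\ref{MainTheorem:2conn} immediately yields $M \cong H_{12}$. So assume $M$ is $3$-connected; the goal is to show $M \cong P_7$.

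Set $D^* = \{a,b,c,d\}$ with $\{a,b,c\}$ the distinguished triangle. By Lemma~\ref{minCond}, $d$ lies in some triangle $T_d$, and orthogonality of $T_d$ with $D^*$ yields $|T_d \cap D^*| \geq 2$. If $T_d \subseteq D^*$, then $T_d$ and $\{a,b,c\}$ are two triangles sharing two elements, so repeated circuit elimination forces every $3$-subset of $D^*$ to be a circuit, giving $M|D^* \cong U_{2,4}$, against Lemma~\ref{No4PtLine}. Hence, after relabeling within $\{a,b,c\}$, we may take $T_d = \{a,d,e\}$ for some $e \notin D^*$, and the plane $F = \cl(\{a,b,c,d,e\})$ carries two triangles $\{a,b,c\}$ and $\{a,d,e\}$ meeting at $a$.

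Next I would carry out the bulk of the work inside and around $F$: apply Lemma~\ref{minCond} to each of $b,c,d,e$ (and to any putative element outside $F$) to extract further $4$-cocircuits, and analyze them via orthogonality with the two known triangles, using Lemma~\ref{No4PtLine} (no $4$-point line) and ternarity (no $U_{3,5}$-restriction) to prune possibilities. The case analysis should produce additional elements $f,g \in F$ and triangles $\{b,d,f\}, \{c,d,g\}, \{e,f,g\}$ (up to relabeling), matching the $P_7$ configuration of Figure~\ref{fig:test}(a). Simultaneously, for a well-chosen $x$, I would pass to a simplified contraction $\si(M/x)$ and invoke Bixby's Lemma to control $3$-connectivity; because $M$ is minor-minimal, $\si(M/x)$ cannot lie in $\mathcal{M}_4$, so some resulting small cocircuit pulls back to a $4$-cocircuit of $M$ that, via orthogonality with $\{a,b,c\}$ and $\{a,d,e\}$, both pins down the $P_7$ triangle structure and excludes any element of $M$ outside $F$. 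Once a $P_7$-restriction is exhibited, the minor-minimality of $M$ together with $P_7 \in \mathcal{M}_4$ forces $M \cong P_7$.

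The main obstacle is this middle step: one must juggle several $4$-cocircuits simultaneously, track each through orthogonality with both triangles and through simplifications of contractions, and rule out numerous near-configurations (e.g.\ a second disjoint triangle inside $F$ that would create a $U_{3,5}$-minor, or an off-plane element whose forced $4$-cocircuit cannot be made to behave under minor-minimality). The ternary hypothesis is essential here, since it prevents a $U_{3,5}$-restriction from appearing in the case analysis; this is the key rigidifying feature that distinguishes the $P_7$ outcome from other plausible rank-$3$ configurations.
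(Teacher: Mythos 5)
Your setup is sound and matches the paper's: the non-$3$-connected case is dispatched by Lemma~\ref{MainTheorem:2conn}, orthogonality with $D^*$ together with Lemma~\ref{minCond} and Lemma~\ref{No4PtLine} correctly forces a triangle $\{a,d,e\}$ with $e\notin D^*$, and the toolkit you name (triads of simple minors forced by minor-minimality that pull back to $4$-cocircuits of $M$, Bixby's Lemma, no $4$-point lines, and ternarity excluding $U_{3,5}$-restrictions of $M^*$) is exactly what the paper uses. Your closing observation that a $P_7$-restriction plus $P_7\in\mathcal{M}_4$ and minor-minimality would force $M\cong P_7$ is also correct.

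The genuine gap is that the middle of the argument---which is essentially the whole proof---is not carried out but only described as what the case analysis ``should'' produce. Concretely, you never establish: (i) that $d$ (the paper's $x_4$) lies in a second triangle, which in the paper requires its own sublemma built from a triad of $M/x_4\backslash x_3$, further forced triangles through new elements $x_6,x_7$, and a $\lambda$-computation showing $r(\{x_1,\dots,x_7\})=3$ and hence $M\cong P_7$ in that branch; (ii) how the subsequent $4$-cocircuits obtained from triads of $M/x_1\backslash x_2$ (and similar minors) interact with the two triangles to either exhibit the $P_7$ structure or yield a $U_{3,5}$-restriction of $M^*$; and (iii) the connectivity estimates that cap $|E(M)|$ and force $r(M)=3$, ruling out any element outside your plane $F$. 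Each of these involves a nontrivial branching argument (the paper spends roughly two pages on it), and nothing in your proposal shows that the branches close the way you predict---indeed you flag this step yourself as the ``main obstacle.'' As written, the proposal is a plausible proof plan in the same spirit as the paper, but not a proof of the lemma.
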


\begin{proof}
    Assume that $M$ is not isomorphic to $P_7$ or $H_{12}$. Let $\{x_1,x_2,x_3,x_4\}$ be $D^*$ and let $\{x_1,x_2,x_3\}$ be a triangle. Since $M \not\cong H_{12}$, Lemma~\ref{MainTheorem:2conn} implies that $M$ is $3$-connected. Then $\co(M\backslash x_4)$ is not $3$-connected because $M\backslash x_4$ has $\{x_1,x_2,x_3\}$ as a circuit and a cocircuit. Thus $\si(M/x_4)$ is $3$-connected. Now, by Lemma~\ref{minCond}, $x_4$ is in a triangle of $M$. We may assume that $\{x_3,x_4,x_5\}$ is a triangle, for some element $x_5$ that is not in $\{x_1,x_2,x_3,x_4\}$. 

    \begin{sublemma}\label{x4notriangle}
        $x_4$ is in a triangle other than $\{x_3,x_4,x_5\}$.
    \end{sublemma}

    Assume that $x_4$ is in a unique triangle. Consider $M/x_4\backslash x_3$. This matroid is simple, so, by the minimality of $M$, it has a triad $T_3^*$. Thus $T_3^* \cup x_3$ is a cocircuit of $M$ that does not contain $x_4$. By orthogonality, $x_5 \in T^*$. Observe that if $T_3^* \cup x_3 = \{x_1,x_2,x_3,x_5\}$, then $M^*|\{x_1,x_2,x_3,x_4,x_5\} \cong U_{3,5}$, a contradiction. Thus, by symmetry between $x_1$ and $x_2$,  we may assume that $T^*\cup x_3 = \{x_2,x_3,x_5,x_6\}$ for some element $x_6$ not in $\{x_1,x_2,\dots,x_5\}$. Then, by Lemma~\ref{minCond}, $x_6$ is in a triangle $C_6$. Suppose $x_2$ or $x_3$ is in $C_6$. As $x_4 \not\in C_6$, by orthogonality with $\{x_1,x_2,x_3,x_4\}$, we deduce that $C_6$ contains two elements of $\{x_1,x_2,x_3\}$. Thus $M$ has a $4$-point line, a contradiction to Lemma~\ref{No4PtLine}. We now know that $C_6 = \{x_5,x_6,x_7\}$ for some element $x_7$ not in $\{x_1,x_2,\dots,x_7\}$.
    
    Consider $M / x_2 \backslash x_1$. Because $\{x_3,x_4,x_5\}$ is the unique triangle of $M$ containing $x_4$ and $M$ has no $4$-point lines, $M / x_2 \backslash x_1$ is simple. By the minimality of $M$, it follows that $M / x_2 \backslash x_1$ has a triad $T_1^*$. Thus $T^*_1 \cup x_1$ is a cocircuit of $M$ avoiding $x_2$. By orthogonality, $x_3 \in T^*_1$, and $x_4$ or $x_5$ is in $T_1^*$. Suppose $T_1^* \cup x_1  = \{x_1,x_3,x_4,e\}$ for some element $e$. Then $e \not = x_2$ and $M^*|\{x_1,x_2,x_3,x_4,e\} \cong U_{3,5}$, a contradiction. Thus $T_1^* \cup x_1$ contains $\{x_1,x_3,x_5\}$ so, by orthogonality, $T_1^* \cup x_1$ is $\{x_1,x_3,x_5,x_6\}$ or $\{x_1,x_3,x_5,x_7\}$. In the first case, we get the contradiction that $M^*|\{x_1,x_2,x_3,x_5,x_6\} \cong U_{3,5}$. Hence $T^*_1 \cup x_1 = \{x_1,x_3,x_5,x_7\}$. Thus $$\lambda(\{x_1,x_2,\dots,x_7\}) \leq r(\{x_1,x_2,\dots,x_7\}) + (7-3) - 7 = r(\{x_1,x_2,\dots,x_7\}) - 3.$$ If $r(\{x_1,x_2,\dots,x_7\}) = 3$, then $E(M) = \{x_1,x_2,\dots,x_7\}$ and,  from the known circuits and cocircuits, we obtain the contradiction that $M \cong P_7$. Thus $r(\{x_1,x_2,\dots,x_7\}) = 4$, so $M$ has at most one element not in \linebreak $\{x_1,x_2,\dots,x_7\}$. As $r(\{x_1,x_2,x_3,x_4,x_5\}) = 3$, we deduce that $M$ has a cocircuit of size less than four, a contradiction. We conclude that \ref{x4notriangle} holds.

    Now, by orthogonality, we may assume that $M$ has $\{x_2,x_4,x_6\}$ as a triangle for some element $x_6$ not in $\{x_1,x_2,\dots, x_5\}$. Assume that $x_1$ is in a triangle $T_1$ other than $\{x_1,x_2,x_3\}$. Then, by Lemma~\ref{No4PtLine} and orthogonality, $T_1 = \{x_1,x_4,x_7\}$ for some element $x_7$ not in $\{x_1,x_2,\dots, x_6\}$. Now $M$ has a $4$-cocircuit $D^*_5$ containing $x_5$. By orthogonality, $D^*_5 \subseteq \{x_1,x_2,\dots, x_7\}$. Thus $\lambda(\{x_1,x_2,\dots,x_7\}) \leq 3 + (7-2) - 7 = 1$. Hence $M$ has at most one element not in $\{x_1,x_2,\dots,x_7\}$, and $r(M) = 3$. As $\{x_1,x_2,x_3,x_4\}$ is a cocircuit of $M$, it follows that $E(M) - \{x_1,x_2,x_3,x_4\}$ is a line of $M$. By Lemma~\ref{No4PtLine}, this line has exactly three points, that is, $\{x_5,x_6,x_7\}$ is a triangle. Hence $M\cong P_7$, a contradiction. We deduce that $\{x_1,x_2,x_3\}$ is the only triangle containing $x_1$. Then $M / x_1 \backslash x_2$ is simple. Hence this matroid has a triad $T^*_2$ avoiding $x_1$, so $T^*_2 \cup x_2$ is a cocircuit of $M$. By orthogonality, $x_3 \in T^*_2$. If $x_4 \in T^*_2$, then $M^*|(\{x_1,x_2,x_3,x_4\}  \cup T^*_2) \cong U_{3,5}$, a contradiction. Thus $x_4 \not \in T^*_2$. Then, by orthogonality, $T^*_2 \cup x_2 = \{x_2,x_3,x_5,x_6\}$. Thus $\lambda(\{x_1,x_2,\dots,x_6\}) \leq 3 + (6-2) - 6 = 1$. Hence $r(M) = 3$ and $|E(M)| = 7$. Let $x_7$ be the element of $E(M) - \{x_1,x_2,\dots,x_6\}$. Then $\{x_1,x_4,x_7\}$ is the complement in $M$ of the cocircuit $\{x_2,x_3,x_5,x_6\}$. Thus $\{x_1,x_4,x_7\}$ is a triangle, a contradiction.
\end{proof}

\begin{lemma}\label{4CocctInd}
     Let $M$ be a $3$-connected minor-minimal ternary matroid in $\mathcal{M}_4$. If $M$ has a $4$-cocircuit that is also a $4$-circuit, then $M$ is isomorphic to $F_7^-$ or $P_7$. 
\end{lemma}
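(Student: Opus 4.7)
Let $C=\{x_1,x_2,x_3,x_4\}$ denote the $4$-cocircuit that is also a $4$-circuit. Each element of $C$ is in a triangle by Lemma~\ref{minCond}. Orthogonality with the cocircuit $C$ forces any such triangle to meet $C$ in at least two elements, and since $C$ is a circuit no three elements of $C$ are dependent, so the intersection is exactly two. Hence every triangle through $x_i$ has the form $\{x_i,x_j,y\}$ with $j\neq i$ and $y\in E(M)-C$, and Lemma~\ref{No4PtLine} ensures that each of the six pair-lines $\cl_M(\{x_i,x_j\})$ carries at most one such extra point. Moreover, applying ternary circuit elimination to a triangle $\{x_1,x_2,y\}$ and the $4$-circuit $C$ yields a useful dichotomy: either $\{x_3,x_4,y\}$ is also a triangle (the ``magic'' case, with $y$ on two complementary pair-lines of $C$), or both $\{x_1,x_3,x_4,y\}$ and $\{x_2,x_3,x_4,y\}$ are $4$-circuits of $M$.

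The main structural step is to prove $r(M)=3$. Suppose for contradiction $r(M)\geq 4$; then $E(M)-C$ has rank at least three, so some element $z$ lies outside $\cl_M(C)$, and by orthogonality the triangle through $z$ is confined to $E(M)-C$. Fix a triangle $\{x_1,x_2,x_5\}$. Since $M$ has no $3$-cocircuits, $M\backslash x_5$ has no $2$-cocircuits, so $\co(M\backslash x_5)=M\backslash x_5$; Bixby's Lemma then gives that one of $M\backslash x_5$ or $\si(M/x_5)$ is $3$-connected. Minor-minimality forces this simpler minor to contain a triad, which lifts to a $4$-cocircuit of $M$ through $x_5$. I would then follow the case-analysis pattern of Lemma~\ref{4CoCctNoTriangle}: chase this new $4$-cocircuit against $C$, the triangle $\{x_1,x_2,x_5\}$, the ternary dichotomy above, and orthogonality with further triangles through $x_3,x_4$, to derive a contradiction—a $4$-point line forbidden by Lemma~\ref{No4PtLine}, a $U_{2,5}$- or $U_{3,5}$-minor forbidden by ternariness, or a $\lambda$-bound incompatible with $r(M)\geq 4$. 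This rank reduction is the main obstacle and will occupy the bulk of the argument.

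Once $r(M)=3$, the hyperplane $E(M)-C$ has rank $2$ and at most three points. If $|E(M)-C|=2$, then since $M$ has no $4$-point lines every triangle of $M$ is closed and hence a hyperplane, so its $3$-element complement is a triad, contradicting $M\in\mathcal{M}_4$. Hence $|E(M)-C|=3$, this set is itself a triangle $\{y_1,y_2,y_3\}$, and $|E(M)|=7$. In a ternary representation with $x_1,x_2,x_3$ the standard basis and $x_4=x_1+x_2+x_3$, the three ``magic'' points of $PG(2,3)$ lying on two complementary pair-lines of $C$ are $x_1+x_2$, $x_1+x_3$, $x_2+x_3$; one checks directly that they are not collinear. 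Hence at most two of $y_1,y_2,y_3$ are magic. A short case check shows that having zero magic $y_i$'s forces the three lines carrying the $y_i$'s to form a triangle of $K_4$, missing some $x_i$ and violating Lemma~\ref{minCond}; so the number of mixed triangles $\{x_i,x_j,y_k\}$ is either $4$ (one magic plus two non-magic) or $5$ (two magic plus one non-magic), giving $5$ or $6$ triangles in $M$ in total. The identity $\binom{7}{2}=3t_3+t_2$ then fixes the $2$-point line counts, and a standard rigidity argument for small rank-$3$ ternary matroids identifies $M$ as $P_7$ in the $5$-triangle case and as $F_7^-$ in the $6$-triangle case, completing the proof.
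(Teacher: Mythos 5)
Your overall skeleton (reduce to $r(M)=3$, then classify the rank-$3$ case) matches the paper's, and your rank-$3$ endgame is essentially sound --- indeed more detailed than the paper, which simply asserts that a $7$-element rank-$3$ simple ternary matroid that is the disjoint union of a triangle and a circuit-cocircuit must be $F_7^-$ or $P_7$. (Two small caveats there: your reason for excluding the ``zero magic $y_i$'' case is shaky --- three pair-lines through the $y_i$'s need not form a triangle of $K_4$, and even if they did, Lemma~\ref{minCond} is not obviously violated; the case is in fact vacuous for a simpler counting reason, namely that all six pair-lines must meet the projective line $\cl(Y)$, which has only four points, forcing exactly two magic points onto that line. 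Also ``a standard rigidity argument'' is left unexecuted, though the paper is equally terse here.)

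The genuine gap is the rank reduction, which is the heart of the lemma and which you explicitly do not carry out: you say you ``would follow the case-analysis pattern of Lemma~\ref{4CoCctNoTriangle}'' and that this ``will occupy the bulk of the argument.'' As written, that is a plan, not a proof, and it is not clear it closes: each application of Bixby's Lemma plus minor-minimality produces a new $4$-cocircuit that may introduce new elements, so the local orthogonality chase can branch without ever pinning down $\lambda$ or the rank. The paper avoids this entirely with the Geelen--Gerards--Whittle Linking Theorem (Theorem~\ref{GGWTutte}): if $r(M)\geq 4$, pick $y\notin\cl(X)$ and a triangle $Y$ through $y$ (which avoids $X$ by orthogonality); since $\kappa_M(X,Y)=\lambda(X)=3+3-4=2$, there is a minor $N$ on $X\cup Y$ with $\kappa_N(X,Y)=2$, $N|X=M|X$ and $N|Y=M|Y$, whence $r(N)=3$ and $N$ is a $7$-element rank-$3$ simple ternary matroid with $X$ a circuit and $Y$ a hyperplane, so $N\cong F_7^-$ or $P_7$; this proper minor lies in $\mathcal{M}_4$ and contradicts the minor-minimality of $M$ in a few lines. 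Without this (or some equally decisive tool), your argument for $r(M)=3$ is incomplete, and that is precisely the step on which the lemma stands or falls.
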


\begin{proof}
 Let $X = \{x_1,x_2,x_3,x_4\}$ and assume that $X$ is a circuit and a cocircuit of $M$. Assume that $r(M) \geq 4$. Take $y$ in $E(M) - \cl(X)$. Then $y$ is in a triangle $Y$ of $M$. Since $y \not\in \cl(X)$, by orthogonality, $X \cap Y = \emptyset$. Observe that $\lambda(X) = 3 + 3 -4 = 2 = \kappa_M(X,Y)$. By Theorem \ref{GGWTutte}, $M$ has a minor $N$ such that $\kappa_N(X,Y) = 2$ and $M|X = N|X$ while $M|Y = N|Y$. Then \linebreak $r_N(Y) = r_M(Y) = 2$ and $r_N(X) = r_M(X) = 3$. As $2 = r_N(X) + r_N(Y) -r(N)$, we deduce that $r(N) = 3$. Thus $N$ is a rank-$3$ simple ternary matroid having $X$ as a circuit and $Y$ as a hyperplane. As $N$ has no $4$-point lines, it follows that $N$ is isomorphic to $F_7^-$ or $P_7$. By the minimality of $M$, we obtain a contradiction. 

 We now know that $r(M) = 3$. Then $E(M) - X$ is a hyperplane  $Y$ of $M$. By Lemma~\ref{No4PtLine},  $Y$ cannot have more than three elements. Suppose $Y$ has exactly two elements. Then $M$ has no triangles, otherwise it has a triad. Thus $M \cong U_{3,6}$, a contradiction. We deduce that $M$ is a rank-$3$ ternary matroid whose ground set is the disjoint union of a triangle $Y$ and a set that is both a circuit and a cocircuit. Thus $M$ is isomorphic to $F_7^-$ or $P_7$.
\end{proof}

For the rest of the section, every time we consider a $4$-cocircuit $C^*$ of $M$, we may assume that $r(C^*) = 4$. The following lemma shows that we cannot have two $4$-cocircuits contained in the disjoint union of two triangles.

\begin{lemma}\label{C1C2_intersection_not_2}
     Let $M$ be a $3$-connected minor-minimal ternary matroid in $\mathcal{M}_4$ and suppose that $M$ is not isomorphic to $P_7, M^*(K_{3,3}),$ or $Q_9$. Let $C^* = \{x_1,x_2,x_4,x_5\}$ be a cocircuit of $M$. Let $\{x_1,x_2,x_3\}$ and $\{x_4,x_5,x_6\}$ be disjoint triangles of $M$. Then there is no  $4$-cocircuit other than $C^*$ that meets both $\{x_1,x_2,x_3\}$ and $\{x_4,x_5,x_6\}$.
\end{lemma}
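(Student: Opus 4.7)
The plan is to argue by contradiction: suppose there is a $4$-cocircuit $D^* \neq C^*$ meeting both triangles $T_1 = \{x_1,x_2,x_3\}$ and $T_2 = \{x_4,x_5,x_6\}$. By orthogonality with each triangle, $|D^* \cap T_i| \in \{0,2,3\}$; combined with $|D^*| = 4$ and the fact that $D^*$ meets both triangles, $|D^* \cap T_i| = 2$ for both $i$. I will then split on $|D^* \cap C^*|$, which equals $2$ or $3$, exploiting the symmetries $x_1 \leftrightarrow x_2$, $x_4 \leftrightarrow x_5$, and the triangle-swap $x_i \leftrightarrow x_{i+3}$ that all preserve the hypotheses.

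In the case $|D^* \cap C^*| = 3$, symmetry reduces us to $D^* = \{x_1,x_2,x_4,x_6\}$. Cocircuit elimination at $x_1$ between $C^*$ and $D^*$ produces a cocircuit inside $\{x_2,x_4,x_5,x_6\}$ of size at least four, hence equal to the entire $4$-set; but this meets $T_1$ in only $\{x_2\}$, violating orthogonality. In the case $|D^* \cap C^*| = 2$, symmetry reduces to $D^* = \{x_1,x_3,x_4,x_6\}$. Eliminating $x_1$ now yields a cocircuit $F^*$ with $\{x_2,x_3\} \subseteq F^* \subseteq \{x_2,x_3,x_4,x_5,x_6\}$ and $|F^* \cap T_2| \in \{2,3\}$. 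A secondary round of elimination disposes of $F^* \in \{\{x_2,x_3,x_4,x_5\}, \{x_2,x_3,x_4,x_6\}\}$: in each subcase, pairing with $C^*$ or $D^*$ respectively and eliminating a shared element forces a putative cocircuit to lie inside $T_1$ or to meet $T_2$ in a singleton, a contradiction either way. This leaves $F^* = \{x_2,x_3,x_5,x_6\}$ (the ``third rectangle'') or the $5$-cocircuit $\{x_2,x_3,x_4,x_5,x_6\}$.

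The hard part will be this final configuration: the cocircuits $C^*$, $D^*$, $F^*$ together with the triangles $T_1, T_2$ realise on $\{x_1,\dots,x_6\}$ two rows of the $3 \times 3$ grid structure characterising $M^*(K_{3,3})$. My plan for finishing is to apply Lemma~\ref{minCond} to each of $x_1,\dots,x_6$ and use orthogonality with $C^*, D^*, F^*$ to force the existence of triangles through each ``column pair'' $\{x_1,x_4\}$, $\{x_2,x_5\}$, $\{x_3,x_6\}$---either directly or inside a minor produced by Theorem~\ref{GGWTutte}. The resulting $9$-element rank-$4$ minor will satisfy the hypotheses of Lemma~\ref{newboy}, so $M$ has $M^*(K_{3,3})$, $U_{2,5}$, or $P_7$ as a minor. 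Since $M$ is ternary no $U_{2,5}$-minor is possible, and by the minor-minimality of $M$ in $\mathcal{M}_4$ combined with $P_7, M^*(K_{3,3}) \in \mathcal{M}_4$, $M$ must be isomorphic to $P_7$ or $M^*(K_{3,3})$; a parallel analysis using Lemma~\ref{lem33} and the ternary characterisation of $Q_9$ in Figure~\ref{fig:K} handles the remaining possibility $M \cong Q_9$. In every subcase $M$ equals one of the excluded matroids, contradicting the hypothesis.
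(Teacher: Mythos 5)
Your opening reduction is correct, and your elimination arguments are a clean alternative to the paper's: where you eliminate a common element of two known $4$-cocircuits, note that the resulting cocircuit must fill the whole remaining $4$-element set (since every cocircuit of $M$ has at least four elements), and then contradict orthogonality with $T_1$ or $T_2$, the paper instead observes that two distinct $4$-cocircuits inside a five-element set give a $U_{3,5}$-restriction of $M^*$, impossible for a ternary matroid. Either way one lands, as the paper does, at the configuration $C^*=\{x_1,x_2,x_4,x_5\}$, $D^*$ a second ``rectangle'' cocircuit, and the two triangles; note that your secondary elimination does not actually deliver the third rectangle, since the five-element outcome for $F^*$ survives, so $F^*$ cannot be relied on later.

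The genuine gap is the finish. Your plan to use Lemma~\ref{minCond} and orthogonality with $C^*,D^*,F^*$ to ``force the existence of triangles through each column pair $\{x_1,x_4\},\{x_2,x_5\},\{x_3,x_6\}$'' and then invoke Lemma~\ref{newboy} cannot work as stated: in the $Q_9$ realisation of exactly this configuration those triangles do not all exist in $M$ (the third point of the relevant projective line lies outside the matroid), so no orthogonality bookkeeping can produce them. Excluding $Q_9$ by hypothesis does not rescue the step, because one must first prove that the failure of these triangles to exist forces $M\cong Q_9$, and that is precisely where the paper does its real work: it shows $r(\{x_1,\dots,x_6\})=4$, uses Theorem~\ref{GGWTutte} to prove $r(M)=4$, shows $E(M)-\{x_1,\dots,x_6\}$ is a three-point line $\{x_7,x_8,x_9\}$ (hence $|E(M)|=9$), and then argues inside the rank-$4$ ternary projective geometry, splitting on whether the line of the column pair $\{x_1,x_4\}$ passes through $x_7$ or through the extra projective point $z_3$; only the first branch feeds Lemma~\ref{newboy}, while the second needs the analysis of $M/x_7\backslash x_8$ and $M/x_8\backslash x_7$ to identify $M$ with $Q_9$. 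None of this rank, size, or projective-geometry control is established, or even sketched, in your proposal, so the hard case remains unproved.
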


\begin{proof}
    Assume there is such a $4$-cocircuit $D^*$. By Lemma~\ref{4CocctInd}, $r(D^*)~=~4$.  Assume that $\{x_1,x_2\} \subseteq D^*$. As $D^*$ contains two members of $\{x_4,x_5,x_6\}$, it follows that $M^*|(C^* \cup D^*) \cong U_{3,5}$, a contradiction. Therefore, $\{x_1,x_3\} \subseteq D^*$ or $\{x_2,x_3\} \subseteq D^*$. Furthermore, by symmetry, one of $\{x_4,x_6\}$ or $\{x_5,x_6\}$ is contained in $D^*$. Thus we may assume that $D^* = \{x_2,x_3,x_5,x_6\}$. Let $X = \{x_1,x_2,\dots,x_6\}$. By Lemma~\ref{4CocctInd}, $r(X) \not = 3$, so $r(X) = 4$. Thus $\lambda(X) \leq 4 + (6-2) - 6 = 2$.  We next show that 

    \begin{sublemma} \label{rm4}
        $r(M) = 4$.
    \end{sublemma}
    Assume that $r(M) \geq 5$ and take $y \in E(M) - \cl(X)$. Then $y$ is in a triangle $Y$ of $M$, and $Y$ avoids $X$. We have that $\kappa_M(X,Y) = \lambda(X) = 2$. Thus, by Theorem \ref{GGWTutte}, $M$ has a minor $N$ with ground set $X \cup Y$ such that $\kappa_N(X,Y) = 2$ while $N|X = M|X$ and $N|Y = M|Y$. Thus $r_N(X) = 4$ and $r_N(Y) = 2$, so $r(N) = 4$. Let $Y = \{x_7,x_8,x_9\}$. Then $\{x_1,x_2,x_3\}$ and $\{x_4,x_5,x_6\}$ are triangles of $N$. Moreover, since $\{x_1,x_2,x_4,x_5\}$ and $\{x_2,x_3,x_5,x_6\}$ are cocircuits of $M$, each of these sets is a union of cocircuits of $N$. Because $r_N(\{x_7,x_8,x_9\}) = 2$, it follows that $\{x_1,x_2,x_4,x_5\}$ and $\{x_2,x_3,x_5,x_6\}$ are cocircuits of $N$. Because $N$ is a simple minor of $M$, it follows that $N$ has a cocircuit of size less than four. Suppose $N$ has a $2$-cocircuit $Z$. Then $Z$ is contained in one of $\{x_1,x_2,x_3\}, \{x_4,x_5,x_6\}$, or $\{x_7,x_8,x_9\}$, and $N \backslash Z$ is a $7$-point plane of $N$. As $r(\{x_1,x_2,\dots,x_6\}) = 4$, we may assume that $Z \subseteq \{x_1,x_2,x_3\}$. Then $N \backslash (Z \cup \{x_1,x_2,x_4,x_5\})$ is a line, $\{x_6,x_7,x_8,x_9\}$, of $N$, a contradiction to orthogonality with the cocircuit $\{x_2,x_3,x_5,x_6\}$. Hence $N$ is cosimple. Thus $N$ has a triad. By orthogonality and the fact that $r_N(X) = 4$, we may assume that $\{x_1,x_2,x_3\}$ is a triad of $N$. Then deleting $\{x_1,x_2,\dots,x_6\}$ from $N$ produces a rank-one matroid with ground set $\{x_7,x_8,x_9\}$, a contradiction. We conclude that \ref{rm4} holds.
    
    As $r(M) = 4$ and $M$ has a plane with at least four points, it follows that $|E(M)| \geq 8$. Let $x_7$ be an element of $E(M) - \{x_1,x_2,\dots,x_6\}$. Then $x_7$ is in a triangle $T$ of $M$. If $T$ avoids $\{x_1,x_2,\dots,x_6\}$, then $|E(M)| \geq 9$. If $T$ meets $\{x_1,x_2,\dots,x_6\}$, then $M$ has a plane with at least five points, so $|E(M)| \geq 9$. Let $E(M) - \{x_1,x_2,\dots,x_6\} = \{x_7,x_8,\dots, x_n\}$ for some $n \geq 9$. As $E(M) - (\{x_1,x_2,x_4,x_5\} \cup \{x_2,x_3,x_5,x_6\})$ is a flat of $M$ of rank at most two and this flat contains $\{x_7,x_8,\dots,x_n\}$, we deduce, by Lemma~\ref{No4PtLine}, that $n = 9$ and $\{x_7,x_8,x_9\}$ is a rank-$2$ flat of $M$. Since $M$ has no triads, $r(\{x_7,x_8,x_9\} \cup L) = 4$ for each $L$ in $\{\{x_1,x_2,x_3\},\{x_4,x_5,x_6\}\}$.

    Since $\{x_1,x_2,x_4,x_5\}$ and $\{x_2,x_3,x_5,x_6\}$ are cocircuits of $M$, the sets $\{x_1,x_4,x_7,x_8,x_9\}$ and $\{x_3,x_6,x_7,x_8,x_9\}$ are planes of $M$. As $r(X) = 4$, we see that $\{x_1,x_3,x_4,x_6\}$ is not a circuit of $M$. Since $M$ is ternary, in the rank-$4$ ternary projective geometry $P$ of which $M$ is a restriction, $\cl_P(\{x_7,x_8,x_9\})$ has a single point $z_3$ that is not in $\{x_7,x_8,x_9\}$. Thus, by symmetry, we may assume that $\{x_3,x_6,x_9\}$ is a triangle of $M$. Moreover, by symmetry again, $\{x_1,x_4,z_3\}$ or $\{x_1,x_4,x_7\}$ is a triangle of $P$.

    We may assume that $\{x_1,x_4,z_3\}$ is a triangle of $P$, otherwise, by \linebreak Lemma~\ref{newboy}, we obtain the contradiction that $M \cong M^*(K_{3,3})$ or $M$ has $P_7$ as a minor. Lemma~\ref{newboy} also implies that $\{x_7,x_8,x_9\}$ is the unique triangle of $M$ containing $e$ for each $e$ in $\{x_7,x_8\}$. Now $M /x_7\backslash x_8$ has rank three and has $\{x_1,x_2,x_3\}, \{x_4,x_5,x_6\}$, $\{x_3,x_6,x_9\}$ and $\{x_1,x_4,x_9\}$ as triangles. Since $P_7 \not\cong M/x_7 \backslash x_8$, the last matroid, which is ternary, has $\{x_1,x_2,x_3,x_5\}$ or $\{x_2,x_4,x_5,x_6\}$ as a line. Thus $M$ has $\{x_1,x_2,x_3,x_5,x_7\}$ or $\{x_2,x_4,x_5,x_6,x_7\}$ as a rank-$3$ set. Then, by considering $M /x_8 \backslash x_7$ instead of $M /x_7 \backslash x_8$, we deduce that $M$ has $\{x_1,x_2,x_3,x_5,x_8\}$ or $\{x_2,x_4,x_5,x_6,x_8\}$ as a plane. Now $M$ cannot have both $\{x_1,x_2,x_3,x_5,x_7\}$ and $\{x_1,x_2,x_3,x_5,x_8\}$ as planes or $M$ has a triad. Therefore either both $\{x_1,x_2,x_3,x_5,x_7\}$ and $\{x_2,x_4,x_5,x_6,x_8\}$ are planes of $M$ or both $\{x_1,x_2,x_3,x_5,x_8\}$ and \linebreak $\{x_2,x_4,x_5,x_6,x_7\}$ are planes of $M$. The first of these possibilities gives that $P$ has $\{x_5,x_7,z_1\}$ and $\{x_2,x_8,z_2\}$ as triangles where $\{x_1,x_2,x_3,z_1\}$ and $\{x_4,x_5,x_6,z_2\}$ are lines of $P$ for some points  $z_1$ and $z_2$ of $P$. The second case is symmetric. In both cases, $M\cong Q_9$,  a contradiction.
\end{proof}

\begin{lemma}\label{newton}
    Let $M$ be a  minor-minimal ternary matroid in $\mathcal{M}_4$. Let $x_1,x_2,\dots,x_6, $ and $x_7$ be distinct elements of $E(M)$ such that $\{x_1,x_2,x_3\}$, $\{x_1,x_4,x_6\}$, and $\{x_1,x_5,x_7\}$ are triangles of $M$, and $\{x_1,x_2,x_4,x_5\}$ and $\{x_1,x_3,x_6,x_7\}$ are cocircuits of $M$. Then $M$ is isomorphic to $F_7^-,P_7,$ or $M(K_5)$.
\end{lemma}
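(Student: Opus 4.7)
\emph{Proof plan.} The plan is to reduce to the case $r(M)=4$ via the Geelen--Gerards--Whittle linking theorem and then identify $M$ as $M(K_5)$ by matching its triangle/cocircuit incidence structure to that of $K_5$.

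First I invoke Lemma~\ref{MainTheorem:2conn} to assume that $M$ is $3$-connected. Set $C_1=\{x_1,x_2,x_4,x_5\}$, $C_2=\{x_1,x_3,x_6,x_7\}$, and $X=C_1\cup C_2$. If either $C_i$ is also a circuit, Lemma~\ref{4CocctInd} immediately gives $M\cong F_7^-$ or $P_7$, so henceforth I may assume $r(C_1)=r(C_2)=4$, whence $r(M)\geq 4$. The three triangles through $x_1$ place $x_3,x_6,x_7$ in $\cl(C_1)$, yielding $r(X)=4$. In $M^*$, the sets $C_1$ and $C_2$ are $4$-circuits meeting only in $x_1$, so submodularity in $M^*$ gives $r^*(X)\leq r^*(C_1)+r^*(C_2)-r^*(\{x_1\})=3+3-1=5$, whence
\[
\lambda(X)=r(X)+r^*(X)-|X|\leq 4+5-7=2.
\]

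To reduce to rank $4$, suppose $r(M)\geq 5$ and pick $y\in E(M)-\cl(X)$; by Lemma~\ref{minCond}, $y$ lies in a triangle $Y$, which by orthogonality with $C_1$ and $C_2$ is disjoint from $X$. Theorem~\ref{GGWTutte} applied to $(X,Y)$ then produces a minor $N$ of $M$ on $X\cup Y$ with $N|X=M|X$, $N|Y=M|Y$, and $\kappa_N(X,Y)=\kappa_M(X,Y)\leq 2$, so $r(N)=r_N(X)+r_N(Y)-\kappa_N(X,Y)=4$ and $N$ is a proper minor of $M$. Mimicking the analysis in Sublemma~\ref{rm4} of the proof of Lemma~\ref{C1C2_intersection_not_2}, one verifies that $C_1$ and $C_2$ persist as $4$-cocircuits of $N$, that $N$ is simple, and that no cocircuit of $N$ has size less than four (else orthogonality with the triangles in $N|X$ or $N|Y$ is violated). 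Then $N\in\mathcal{M}_4$ contradicts the minor-minimality of $M$, so $r(M)=4$.

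For the rank-$4$ endgame, $\lambda(X)\leq 2$ and $r(X)=4$ force $r(E(M)-X)\leq 2$, and Lemma~\ref{No4PtLine} then gives $|E(M)-X|\leq 3$. For each $y\in E(M)-X$, Lemma~\ref{minCond} supplies a triangle through $y$ and orthogonality with $C_1$ and $C_2$ restricts such a triangle to one of three types: one entirely inside $E(M)-X$, or one meeting exactly one of $C_1,C_2$ in two points. A case analysis, driven by the $4$-cocircuits demanded through $x_3,x_6,x_7$ and the requirement that $M$ be ternary, will force $|E(M)-X|=3$ with the three extras forming a triangle, and will produce on $\{x_1,\ldots,x_7\}$ the six triangles and four additional vertex-cocircuits present in $K_5$. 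A rank-$4$ ternary matroid on ten elements with this incidence data is uniquely $M(K_5)$ (by a representation argument in the style of Lemmas~\ref{lem33} and \ref{tiedown}), and we conclude $M\cong M(K_5)$.

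The main obstacle is the rank-$4$ endgame: translating the connectivity bound $|E(M)-X|\leq 3$ into the exact $K_5$-incidence pattern and then invoking ternary uniqueness to finish. The linking-theorem reduction to rank $4$ follows a template by now standard in this section (compare Lemma~\ref{C1C2_intersection_not_2}), but pinning down every additional triangle and $4$-cocircuit to match $M(K_5)$ requires careful orthogonality bookkeeping across all ten elements.
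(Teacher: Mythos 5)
Your reduction to a ten-element rank-$4$ minor is essentially the paper's own first step (the paper also takes a triangle $Y$ off $\cl(X)$ and applies Theorem~\ref{GGWTutte}), and that part is broadly sound, though two of your justifications are thinner than you suggest: the claim that no cocircuit of $N$ of size less than four exists does not follow from orthogonality with the triangles alone (for instance a triad equal to $Y$, or a $2$-cocircuit such as $\{x_2,x_3\}$, is orthogonality-compatible and must be killed using $r_N(X)=4$ and the fact that $C_1,C_2$ remain cocircuits of $N$); and to assume $r(C_i)=4$ you need Lemma~\ref{4CoCctNoTriangle} as well as Lemma~\ref{4CocctInd}, since a rank-$3$ $4$-cocircuit could contain a triangle (e.g.\ $\{x_2,x_4,x_5\}$) without being a circuit, and the $H_{12}$ outcome of Lemmas~\ref{MainTheorem:2conn} and \ref{4CoCctNoTriangle} has to be discarded by checking it cannot satisfy the hypotheses.

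The genuine gap is the rank-$4$ endgame, which you explicitly leave as a plan (``a case analysis \dots will force \dots''). That endgame is the mathematical core of the paper's proof: working in the rank-$4$ ternary projective space $P$ containing the ten-element minor $N$, one uses the two hyperplanes $Y\cup\{x_2,x_4,x_5\}$ and $Y\cup\{x_3,x_6,x_7\}$ to locate the points $a,b,c$ where the lines through $\{x_2,x_4\},\{x_2,x_5\},\{x_4,x_5\}$ meet the $4$-point projective line $\cl_P(Y)=\{a,b,c,d\}$, deduces that $\{x_3,x_6,a\},\{x_3,x_7,b\},\{x_6,x_7,c\}$ are also collinear in $P$, and then splits: if all of $a,b,c$ lie in $Y$ one gets $N\cong M(K_5)$, while if exactly two do, then $N/d$ has a $P_7$-minor, a contradiction. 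Your sketch assumes the configuration ``will'' collapse to the $K_5$ incidence pattern, but it is not forced; the non-$K_5$ configuration genuinely occurs as a case and must be refuted via a forbidden-minor argument, and nothing in your outline identifies this case or how to eliminate it. Until that case analysis (or an equivalent identification argument) is actually carried out, the proposal does not prove the lemma.
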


\begin{proof}
    Let $X = \{x_1,x_2,\dots,x_7\}$. We may assume that $r(X) \geq 4$, otherwise $M \cong P_7$. Assume $r(X) =  4$. Then $|E(M) - X| \geq 2$ as $M \in \mathcal{M}_4$ and $X$ contains a plane with at least five points. Suppose $|E(M) - X| = 2$ and $y \in E(M) - X$. Then $y$ is in a triangle that, by symmetry, has $x_3$ and $x_6$ as its other elements. Then $M$ has a plane with at least six elements, so $|E(M) - X| \geq 4$, a contradiction. We may now assume that $|E(M) - X| \geq 3$. As $\lambda(X) = 2$, we see that $r(E(M) - X) = 2$. Then, by Lemma \ref{No4PtLine}, $E(M) - X$ is a triangle of $M$. When $r(M) = 4$, we call this triangle $Y$. When $r(M) \geq 5$, take $z$ to be an element of $E(M) - \cl(X)$. In this case, we let $Y$ be this triangle. Thus, whenever $r(M) \geq 4$, we have a triangle of $M$ that avoids $X$. Now $\kappa_M(X,Y) = \lambda(Y) = 2$, so, by Theorem \ref{GGWTutte}, $M$ has a minor $N$ with $E(N) = X \cup Y$ and $\kappa_N(X,Y) = 2$ such that $M|X = N|X$ and $M|Y = N|Y$. Thus $r_N(Y) = 2$ and $r_N(X) = r(N) = 4$. Now each of $\{x_1,x_2,x_4,x_5\}$ and $\{x_1,x_3,x_6,x_7\}$ is a cocircuit of $N$, otherwise $r_N(Y) \leq 1$, a contradiction. 

    Let $P$ be the rank-$4$ ternary projective space of which $N$ is a restriction. Let $\cl_P(Y) = \{a,b,c,d\}$. Now $N$ has $Y \cup \{x_2,x_4,x_5\}$ and $Y\cup \{x_3,x_6,x_7\}$ as planes that meet on the line $Y$. Let $\{x_2,x_4,a\}, \{x_2,x_5,b\}$, and $\{x_4,x_5,c\}$ be triangles of $P$. Note that $r(\{x_2,x_3,x_4,x_6\}) = 3$. Now the projective line $\cl_P(Y)$ meets the projective plane $\cl_P(\{x_2,x_3,x_4,x_6\})$ in the point $a$ because $\cl_P(Y)$ is not contained in the projective plane. Thus $\{x_3,x_6,a\}$ is a triangle of $P$. Similarly, $\{x_3,x_7,b\}$ and $\{x_6,x_7,c\}$ are triangles of $P$. If all of $a,b,$ and $c$ are in $Y$, then $N\backslash x_1 \cong M(K_5\backslash e)$. The triangles containing $x_1$ imply that $N \cong M(K_5)$. We may now assume that exactly two of $a,b,$ and $c$ are in $Y$. Thus $d \in Y$ and $N/d$ is a rank-$3$ matroid having three $3$-point lines containing $x_1$ and having $\{x_2,x_4,x_5\}$ and $\{x_3,x_6,x_7\}$ as triangles. Thus $N / d$ has a $P_7$-minor, a contradiction.
\end{proof}

 \begin{lemma}\label{NoThreeTriangles}
         Let $M$ be a minor-minimal ternary matroid in $\mathcal{M}_4$. If $M$ is $3$-connected, then no element of $M$ is in at least three triangles unless $M$ is isomorphic to $F_7^-,P_7,$ or $M(K_5)$.
\end{lemma}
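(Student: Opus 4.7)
The plan is to set up the hypotheses of Lemma~\ref{newton}. Suppose $M$ has an element $x_1$ in at least three triangles, and assume for contradiction that $M \not\cong F_7^-, P_7, M(K_5)$. By Lemma~\ref{No4PtLine} no two triangles through $x_1$ share a second element, so seven distinct elements appear and we label three triangles through $x_1$ as $\{x_1,x_2,x_3\}$, $\{x_1,x_4,x_6\}$, and $\{x_1,x_5,x_7\}$. Applying Lemma~\ref{minCond} to $x_1$ and using orthogonality with each triangle, any $4$-cocircuit through $x_1$ contains exactly one element from each of $\{x_2,x_3\}$, $\{x_4,x_6\}$, and $\{x_5,x_7\}$; after relabeling we may take this cocircuit to be $C^* = \{x_1,x_2,x_4,x_5\}$.

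The remaining task is to produce the second cocircuit $\{x_1,x_3,x_6,x_7\}$ demanded by Lemma~\ref{newton}. I would apply Lemma~\ref{minCond} to $x_3$ to obtain a $4$-cocircuit $D^*$ through $x_3$, and case-analyze via orthogonality. If $x_1 \in D^*$, then $D^* = \{x_1,x_3,a,b\}$ with $a \in \{x_4,x_6\}$ and $b \in \{x_5,x_7\}$. The choice $D^* = \{x_1,x_3,x_6,x_7\}$ finishes via Lemma~\ref{newton}, while $D^* = \{x_1,x_3,x_4,x_5\}$ yields $M^*|(C^* \cup D^*) \cong U_{3,5}$, contradicting ternariness (since $M$ has no triads). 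For the remaining two forms, cocircuit elimination on $x_1 \in C^* \cap D^*$ combined with orthogonality with the three triangles produces a $4$-cocircuit contained in $\{x_2,x_3,x_5,x_7\}$ or $\{x_2,x_3,x_4,x_6\}$, which by the size-$4$ requirement must equal the full set. Each such set is also a $4$-circuit, because in any ternary representation the triangle identities $x_3 = x_1 + x_2$, $x_6 = x_1 + x_4$, $x_7 = x_1 + x_5$ give $x_2 - x_3 - x_5 + x_7 = 0$ and $x_2 - x_3 - x_4 + x_6 = 0$. Lemma~\ref{4CocctInd} then forces $M \in \{F_7^-, P_7\}$, a contradiction. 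If instead $x_1 \notin D^*$, orthogonality with $\{x_1,x_2,x_3\}$ forces $x_2 \in D^*$, and the choices producing $\{x_2,x_3,x_4,x_6\}$ or $\{x_2,x_3,x_5,x_7\}$ are again $4$-cocircuits that coincide with $4$-circuits, handled by Lemma~\ref{4CocctInd}.

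The main obstacle is the final sub-case $D^* = \{x_2,x_3,y,z\}$ with $y,z \notin \{x_1,\dots,x_7\}$, together with its analogues for $4$-cocircuits through $x_6$ and $x_7$. In this case the complementary hyperplane of $D^*$ contains $\{x_1,x_4,x_5\}$ and therefore equals $\cl_M(\{x_1,x_4,x_5\})$, pinning $y$ and $z$ to specific ternary points. I would then repeat the analysis for $x_6$ and $x_7$: either one of them yields a $4$-cocircuit of a form already handled above (finishing the proof), or all three elements admit only bad-form $4$-cocircuits, forcing rigid ternary structure on the outside elements. Exploiting this rigidity together with minor-minimality of $M$ should yield a contradiction by contracting one of the outside elements (say $y$, which lies in a triangle by Lemma~\ref{minCond}) to produce a proper minor still in $\mathcal{M}_4$, or by forcing a triad via the combined orthogonality constraints. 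I expect this last case to be the only place where care beyond the direct application of Lemmas~\ref{newton} and \ref{4CocctInd} is needed.
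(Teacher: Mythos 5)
Your opening reductions are sound and overlap with the paper's own first steps: taking three triangles $\{x_1,x_2,x_3\}$, $\{x_1,x_4,x_6\}$, $\{x_1,x_5,x_7\}$ through $x_1$, using Lemma~\ref{No4PtLine} for distinctness, and forcing the $4$-cocircuit through $x_1$ to be $\{x_1,x_2,x_4,x_5\}$ after relabeling. Your handling of the sub-cases where a $4$-cocircuit through $x_3$ stays inside $\{x_1,\dots,x_7\}$ is essentially correct, with one small slip: a vanishing $4$-term linear relation shows that $\{x_2,x_3,x_5,x_7\}$ (or $\{x_2,x_3,x_4,x_6\}$) is \emph{dependent}, not that it is a circuit; it could instead contain a triangle, in which case you need Lemma~\ref{4CoCctNoTriangle} (giving $P_7$ or $H_{12}$, the latter excluded by $3$-connectivity) rather than Lemma~\ref{4CocctInd}. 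That is easily patched.

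The genuine gap is the case you yourself flag as ``the main obstacle'': $D^* = \{x_2,x_3,y,z\}$ with $y,z \notin \{x_1,\dots,x_7\}$, and its analogues for $x_6$ and $x_7$. Here you have no argument, only intentions, and the one concrete claim you make is wrong: the hyperplane complementary to $D^*$ contains \emph{all} of $E(M)-D^*$ (in particular $x_6,x_7$ and every other element) and has rank $r(M)-1$, so it is not $\cl_M(\{x_1,x_4,x_5\})$ in general and does not ``pin'' $y$ and $z$. This escaping-to-new-elements situation is exactly the hard core of the lemma, and handling it is not routine --- elsewhere in the paper such escapes trigger the whole growth machinery of Lemmas~\ref{new-1}--\ref{last}. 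The paper's proof avoids ever facing it by using minor-minimality differently: since $M/x_4\backslash x_1$ is simple, minimality forces it to have a triad, which lifts to a $4$-cocircuit of $M$ through $x_1$ avoiding $x_4$; orthogonality then either produces the cocircuit $\{x_1,x_3,x_6,x_7\}$ (finishing by Lemma~\ref{newton}) or forces a \emph{second triangle} through $x_4$, and by symmetry through $x_2$ as well. With this richer triangle structure in hand, any $4$-cocircuit through $x_6$ is pinned by orthogonality to one of two completely determined forms, each of which satisfies the hypotheses of Lemma~\ref{newton}, so no cocircuit can run off into unnamed elements. Without an argument of comparable strength for your final case, the proposal does not prove the lemma.
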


\begin{proof}
   Assume that $M$ is not isomorphic to $F_7^-,P_7,$ or $M(K_5)$. Recall that, by Lemma~\ref{4CocctInd}, every $4$-cocircuit is independent. Let $\{x_1,x_2,x_4,x_5\}$ be a cocircuit $C^*$ of $M$. Assume that $x_1$ is in more than three triangles. Let $T_1,T_2,T_3,$ and $T_4$ be four such triangles. Then, by Lemma~\ref{4CoCctNoTriangle}, $|C^* \cap T_i| = 2$ for all $i$ in $\{1,2,3,4\}$. However, this would imply that $x_1$ is contained in a four-point line, a contradiction to Lemma~\ref{No4PtLine}. Thus we may assume that $x_1$ is in exactly three triangles, say, $\{x_1,x_2,x_3\}$, $\{x_1,x_4,x_6\}$, and $\{x_1,x_5,x_7\}$. We first show the following.
    \begin{sublemma}\label{3triangles:x4oneTriangle}
        $x_4$ is in a triangle other than $\{x_1,x_4,x_6\}$.
    \end{sublemma}

  Assume that this fails. Then $M/x_4\backslash x_1$ is simple. By the minimality of $M$, it follows that $M/x_4\backslash x_1$ has a triad $T^*$. Thus $T^* \cup x_1$ is a cocircuit of $M$ where $x_4 \not\in T^*$. Then $x_6 \in T^*$ and, by orthogonality, $T^*$ contains $x_2$ or $x_3$, and $T^*$ contains $x_5$ or $x_7$. If $\{x_1,x_2,x_5,x_6\}$ is a cocircuit, then $M^*|\{x_1,x_2,x_4,x_5,x_6\} \cong U_{3,5}$, a contradiction. If $\{x_1,x_3,x_5,x_6\}$ is a cocircuit, then, as $\{x_1,x_2,x_4,x_5\}$ is a cocircuit, by circuit elimination, we deduce that $M$ has a cocircuit contained in $\{x_2,x_3,x_4,x_5,x_6\}$. By orthogonality with the triangle $\{x_1,x_5,x_7\}$, we see that $\{x_2,x_3,x_4,x_6\}$ is a cocircuit. However, $\{x_2,x_3,x_4,x_6\}$ is also a circuit, a contradiction to Lemma~$\ref{4CocctInd}$. Now assume that $\{x_1,x_2,x_6,x_7\}$ is a cocircuit. Then, again, as $\{x_1,x_2,x_4,x_5\}$ is a cocircuit, there is a cocircuit contained in $\{x_2,x_4,x_5,x_6,x_7\}$. By orthogonality with the triangle $\{x_1,x_2,x_3\}$, we deduce that $\{x_4,x_5,x_6,x_7\}$ is a cocircuit. As $\{x_4,x_5,x_6,x_7\}$  is also a circuit, we get a contradiction to Lemma~\ref{4CocctInd}. We conclude that $\{x_1,x_3,x_6,x_7\}$ is a cocircuit of $M$. By Lemma~\ref{newton}, this yields a contradiction. Thus \ref{3triangles:x4oneTriangle} holds.

  By \ref{3triangles:x4oneTriangle} and symmetry, $\{x_4,x_5,x_8\}$ is a triangle of $M$ for some element $x_8$ not in $E(M) - \{x_1,x_2,\dots,x_7\}$. By symmetry again, $x_2$ is in a triangle other than $\{x_1,x_2,x_3\}$, so $M$ has $\{x_2,x_4,x_9\}$ or $\{x_2,x_5,x_9\}$ as a triangle for some element $x_9$. By applying the permutation $(x_4,x_5)(x_6,x_7)$ to $E(M)$, we deduce that these two cases are symmetric, so it suffices to consider the former. 

  Suppose first that $x_1,x_2,\dots,x_8,$ and $x_9$ are distinct. Now $M$ has a cocircuit $C^*_6$ that contains $x_6$. By orthogonality, $x_1$ or $x_4$ is in $C^*_6$, so $C^*_6$ is $\{x_1,x_3,x_6,x_7\}$ or $\{x_4,x_6,x_8,x_9\}$. The first case gives a contradiction by Lemma~\ref{newton}. In the second case, $M$ has $\{x_1,x_4,x_6\}$, $\{x_2,x_4,x_9\},$ and $\{x_4,x_5,x_8\}$ as triangles and has $\{x_1,x_2,x_4,x_5\}$ and $\{x_4,x_6,x_8,x_9\}$ as cocircuits and again we get a contradiction by Lemma~\ref{newton}. 

  We may now assume that $x_1,x_2,\dots,x_8$, and $x_9$ are not distinct. Since $x_1,x_2,\dots,x_6,$ and $x_7$ are distinct, $x_8$ or $x_9$ is in $\{x_1,x_2,\dots,x_7\}$. But one easily checks that each possibility implies that $r(\{x_1,x_2,\dots,x_7\}) = 3$, a contradiction. We conclude that Lemma~\ref{NoThreeTriangles} holds.  
\end{proof}

\begin{lemma}\label{4co}
    Let $M$ be a minor-minimal  matroid in $\mathcal{M}_4$. If $M$ has \linebreak $\{x_1,x_2,x_3\}$ as the unique triangle containing $x_1$, then $M$ has a $4$-cocircuit that meets $\{x_1,x_2,x_3\}$ in $\{x_2,x_3\}$.
\end{lemma}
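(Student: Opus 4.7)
The plan is to consider the minor $M/x_1 \backslash x_2$ and exploit the minor-minimality of $M$ to produce a small cocircuit, which we then lift back to $M$ using orthogonality with the triangle $\{x_1,x_2,x_3\}$.

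First I would verify that $M/x_1 \backslash x_2$ is simple. Since $M$ is simple, $x_1$ has no parallel element, so $M/x_1$ has no loops. The nontrivial parallel classes of $M/x_1$ correspond precisely to the triangles of $M$ through $x_1$; by hypothesis the only such triangle is $\{x_1,x_2,x_3\}$, so $\{x_2,x_3\}$ is the unique nontrivial parallel class of $M/x_1$. Deleting $x_2$ therefore yields a simple matroid.

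Next, because $M/x_1 \backslash x_2$ is a proper minor of $M$, the minor-minimality of $M$ in $\mathcal{M}_4$ forces $M/x_1 \backslash x_2 \notin \mathcal{M}_4$. Being simple, it must possess a cocircuit $D^*$ of size at most three. Since the cocircuits of $M/x_1$ are exactly the cocircuits of $M$ that avoid $x_1$, the set $D^*$ lifts to a cocircuit $C^*$ of $M$ with $x_1 \notin C^*$ satisfying either $C^* = D^*$ or $C^* = D^* \cup \{x_2\}$. The first case is impossible since every cocircuit of $M$ has size at least four; hence $C^* = D^* \cup \{x_2\}$ with $|C^*| = 4$ and $x_2 \in C^*$.

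Finally, orthogonality between $C^*$ and the triangle $\{x_1,x_2,x_3\}$ forbids $|C^* \cap \{x_1,x_2,x_3\}|$ from equalling exactly one. Since $x_2 \in C^*$ while $x_1 \notin C^*$, this forces $x_3 \in C^*$, so $C^* \cap \{x_1,x_2,x_3\} = \{x_2,x_3\}$, as required. The argument is essentially routine; the only subtlety is confirming that the uniqueness hypothesis on triangles through $x_1$ is precisely what is needed to make $M/x_1 \backslash x_2$ simple, so that minor-minimality can be applied.
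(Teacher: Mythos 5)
Your proof is correct and follows essentially the same route as the paper: the paper contracts $x_1$ and deletes $x_3$ (rather than $x_2$, a purely symmetric choice), uses minor-minimality of $M$ to find a triad of the simple minor, lifts it to a $4$-cocircuit of $M$ avoiding $x_1$, and applies orthogonality with the triangle $\{x_1,x_2,x_3\}$. Your extra care in ruling out cocircuits of size one or two in the minor is a harmless refinement of the same argument.
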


\begin{proof}
    Consider $M /x_1 \backslash x_3$. Since it is simple, by the minimality of $M$, the matroid $M / x_1 \backslash x_3$ has a triad $T^*$. Thus $T^* \cup x_3$ is a cocircuit of $M$. By orthogonality, $x_2 \in T^*$ since $x_1 \not\in T_1^*$. Hence the $4$-cocircuit $T^*\cup x_3$ meets $\{x_1,x_2,x_3\}$ in $\{x_2,x_3$\}.
\end{proof}

\begin{lemma}\label{3tri}
    Let $M$ be a minor-minimal ternary matroid in $\mathcal{M}_4$ that is not isomorphic to $M^*(K_{3,3})$ or $Q_9$. Assume that $M$ has $\{x_1,x_2,x_3\},\linebreak\{x_4,x_5,x_6\}$, and $\{x_7,x_8,x_9\}$ as disjoint triangles and has no other triangles meeting $\{x_1,x_2,\dots,x_9\}$. Then $M$ does not have all of $\{x_1,x_2,x_4,x_5\},\linebreak \{x_2,x_3,x_7,x_8\}$, and $\{x_5,x_6,x_8,x_9\}$ as cocircuits.
\end{lemma}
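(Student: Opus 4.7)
The plan is to assume, for contradiction, that $M$ has all three cocircuits $C_1^* = \{x_1,x_2,x_4,x_5\}$, $C_2^* = \{x_2,x_3,x_7,x_8\}$, $C_3^* = \{x_5,x_6,x_8,x_9\}$, and to derive a contradiction by enriching the configuration around $X = \{x_1,\dots,x_9\}$ and then invoking Theorem~\ref{GGWTutte} together with Lemma~\ref{lem33}. By Lemma~\ref{MainTheorem:2conn} one may also take $M$ to be $3$-connected (the $H_{12}$ exception is easily ruled out against the given configuration). Each of $x_2$, $x_5$, $x_8$ lies in a unique triangle by hypothesis, so applying Lemma~\ref{4co} to each produces $4$-cocircuits $D_2^* \supseteq \{x_1,x_3\}$, $D_5^* \supseteq \{x_4,x_6\}$, and $D_8^* \supseteq \{x_7,x_9\}$, each missing its middle element. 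Orthogonality with the two remaining row triangles forces the other two elements of each $D_i^*$ to lie entirely in a second row of $X$ or entirely in $E(M)\setminus X$; the first alternative would produce a second $4$-cocircuit meeting two of the disjoint row triangles, contradicting Lemma~\ref{C1C2_intersection_not_2}. Hence
\[
 D_2^* = \{x_1,x_3,c,d\},\quad D_5^* = \{x_4,x_6,a,b\},\quad D_8^* = \{x_7,x_9,e,f\},
\]
with $a,b,c,d,e,f \in E(M)\setminus X$.

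Next I would determine the triangles through the exterior elements. By Lemma~\ref{minCond}, $a$ lies in a triangle $T_a$; the hypothesis that no triangle other than the three given meets $X$ forces $T_a \cap X = \emptyset$, and orthogonality of $T_a$ with $D_5^*$ forces $b \in T_a$. So $T_a = \{a,b,g\}$ for some $g \notin X$. The same orthogonality argument shows that \emph{every} triangle through $a$ contains $b$, and a short parallel-class argument in $M/a$ (another triangle $\{a,b,g'\}$ would make $g,g'$ parallel in $M/a$, forcing a triangle $\{a,g,g'\}$ not containing $b$) then shows $\{a,b,g\}$ is the unique triangle through each of $a$ and $b$. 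Symmetrically, $D_2^*$ and $D_8^*$ yield exterior triangles $\{c,d,h\}$ and $\{e,f,k\}$, so that $M$ now carries six triangles and six $4$-cocircuits linked in a prescribed way.

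Finally, I would bound the rank: the $X$-complement of each $C_i^*$ consists of one row triangle plus two extra points, hence has rank at most three in $M|X$, forcing $r(X)\le 5$, while $r(X)\ge r(C_1^*) = 4$. Combined with three-connectedness, this gives $\lambda(X)\le 3$. Taking $Y = \{a,b,g\}$, Theorem~\ref{GGWTutte} then produces a minor $N$ with ground set $X \cup Y$ in which $M|X$ and $M|Y$ survive intact and $\kappa_N(X,Y) = \kappa_M(X,Y)$. The aim is to show that the triangles and cocircuits inherited by $N$ force $N$ to realise the $3\times 3$-grid triangle configuration of Lemma~\ref{lem33}, yielding $M^*(K_{3,3})$ as a minor of $M$, or alternatively to force a $Q_9$-minor; either contradicts the hypothesis. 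The main obstacle is this last identification: Lemma~\ref{lem33} demands six triangles (rows \emph{and} columns of the grid), while our configuration only supplies the three row triangles. The missing column triangles must be produced inside $N$ via orthogonality of the inherited cocircuits $C_i^*$ and $D_i^*$ in the ternary projective setting, and the delicate part is controlling how the exterior elements $a,b,\dots,k$ collapse inside $N$ so that the column triangles appear exactly where Lemma~\ref{lem33} requires.
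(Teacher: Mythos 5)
Your preliminary steps are sound and in fact overlap with the paper's own opening move: applying Lemma~\ref{4co} to $x_2$ (and, in your case, also to $x_5$ and $x_8$) and using orthogonality together with Lemma~\ref{C1C2_intersection_not_2} to push the two remaining elements of each new cocircuit outside $X$, then using Lemma~\ref{minCond} and Lemma~\ref{No4PtLine} to find the exterior triangles. But the proof is not completed: the contradiction is never derived, and you say so yourself when you call the final identification ``the main obstacle.'' Nothing in the proposal shows that the linking minor $N$ of Theorem~\ref{GGWTutte} acquires the three \emph{column} triangles of Figure~\ref{fig:k33}; in $M$ itself those triangles are explicitly forbidden by the hypothesis that no triangle other than the three rows meets $X$, so they could only be created by the contractions implicit in Theorem~\ref{GGWTutte}, and you give no control over that. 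Your rank bound is also unjustified: there is no reason that $X - C_1^* = \{x_3,x_6,x_7,x_8,x_9\}$ has rank at most three, and the correct computation (three cocircuits inside $X$, each with private elements) only yields $\lambda(X) \le 3$, which is too weak to force the rank-$4$ grid geometry needed for Lemma~\ref{lem33} in $N$. You also never address possible coincidences among $a,b,c,d,e,f,g,h,k$. So there is a genuine gap at exactly the point where the contradiction should appear.

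For comparison, the paper closes the argument by a minimality argument rather than by forcing a forbidden minor. Having obtained a $4$-cocircuit $\{x_1,x_3,x_{10},x_{11}\}$ with $x_{10},x_{11} \notin X$, it first shows that no $4$-circuit of $M$ contains $\{x_2,x_3\}$: such a circuit would have to be $\{x_2,x_3,x_4,x_{10}\}$, and eliminating $x_2$ against the triangle $\{x_1,x_2,x_3\}$ produces a circuit meeting the cocircuit $\{x_2,x_3,x_7,x_8\}$ in a single element. Consequently $M/x_2,x_3\backslash x_1$ is simple (each of $x_2,x_3$ lies in only the one triangle, which is destroyed by deleting $x_1$, and there is no $4$-circuit through $\{x_2,x_3\}$), and it has no triad, since a triad $T^*$ would make $T^* \cup x_1$ a cocircuit of $M$ meeting the triangle $\{x_1,x_2,x_3\}$ in a single element. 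Hence this proper minor lies in $\mathcal{M}_4$, contradicting the minor-minimality of $M$. If you wish to pursue your route you must supply the missing forcing argument for the column triangles (or a $Q_9$-minor); as written, the plan stops short of a proof.
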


\begin{proof}
    Assume that $M$ does have the three specified sets as cocircuits. By Lemma~\ref{4co}, $M$ has a $4$-cocircuit $C^*$ containing $\{x_1,x_3\}$. Thus, by Lemmas~\ref{4CoCctNoTriangle}~and~\ref{4CocctInd}, $C^* = \{x_1,x_3,x_{10},x_{11}\}$ for some elements $x_{10}$ and $x_{11}$ in $E(M) - \{x_1,x_2,\dots,x_9\}$.

    \begin{sublemma}\label{no4}
        $M$ has no $4$-circuit containing $\{x_2,x_3\}$.
    \end{sublemma}

    Assume $M$ has such a circuit $C$. As $\{x_1,x_2,x_4,x_5\}$ is a cocircuit, $C$ contains $x_4$ or $x_5$. By Lemma~\ref{4CocctInd}, $C$ does not contain $\{x_4,x_5\}$ otherwise $r(\{x_1,x_2,x_3,x_4,x_5,x_6\}) = 3$ and $M$ has $\{x_1,x_2,x_4,x_5\}$ as a circuit and a cocircuit. Thus $C$ is $\{x_2,x_3,x_4,\alpha\}$ or $\{x_2,x_3,x_5,\beta\}$ for some elements $\alpha$ and $\beta$. Suppose $C = \{x_2,x_3,x_5,\beta\}$. Then $\beta \in \{x_6,x_8,x_9\}$ and we obtain a contradiction to orthogonality with the cocircuit $\{x_1,x_3,x_{10},x_{11}\}$. Thus $C = \{x_2,x_3,x_4,\alpha\}$ and, by symmetry, we may assume that $\alpha = x_{10}$.

    As $\{x_2,x_3,x_4,x_{10}\}$ and $\{x_1,x_2,x_3\}$ are circuits of $M$, we can eliminate $x_2$ to obtain that $\{x_1,x_3,x_4,x_{10}\}$ is a circuit of $M$ since each of $x_1,x_3,$ and $x_4$ is in just one triangle. But this circuit meets the cocircuit $\{x_2,x_3,x_7,x_8\}$ in a single element, a contradiction. Hence \ref{no4} holds.

    Consider $M /x_2,x_3\backslash x_1$. By \ref{no4} and the fact that each of $x_2$ and $x_3$ is in a single triangle, we deduce that this matroid is simple. Now $M / x_2,x_3 \backslash x_1$ has no triad $T^*$ otherwise $M$ has $T^* \cup x_1$ as a cocircuit that meets $\{x_1,x_2,x_3\}$ in a single element. We deduce that $M / x_2,x_3 \backslash x_1$ contradicts the minimality of $M$.
\end{proof}

\begin{lemma}\label{ring}
     Let $M$ be a minor-minimal ternary matroid in $\mathcal{M}_4$. Assume that no element of $M$ is more than one triangle. Then there is no integer $n$ such that $M$ has $T_1,T_2,\dots,T_n$ as disjoint triangles and has $C_1^*,C_2^*,\dots,C_n^*$ as $4$-cocircuits such that, interpreting all subscripts modulo $n$, the sets $C^*_i \cap T_i$ and $C^*_{i-1} \cap T_i$ are distinct and $|C_i^* \cap T_i| = 2 = |C^*_i \cap T_{i+1}|$ for all $i$.
\end{lemma}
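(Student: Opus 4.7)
The plan is to argue by contradiction and minimize $n$. Suppose such a cycle exists in $M$ and fix one with $n$ smallest. For each $i$, label the elements of $T_i$ as $p_i, q_i, r_i$ so that $C^*_{i-1}\cap T_i=\{p_i,q_i\}$ and $C^*_i\cap T_i=\{p_i,r_i\}$; thus $p_i$ is the unique element of $T_i$ in both adjacent cocircuits and $C_i^*=\{p_i,r_i,p_{i+1},q_{i+1}\}$. The hypothesis that no element of $M$ lies in more than one triangle excludes the possibility that $M$ is isomorphic to any of $F_7^-, P_7, M^*(K_{3,3}), Q_9,$ or $H_{12}$, since each of these matroids has every element in at least two triangles; combined with Lemma~\ref{MainTheorem:2conn} this allows us to assume $M$ is $3$-connected. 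The case $n=2$ is then precisely the configuration ruled out by Lemma~\ref{C1C2_intersection_not_2}, and the case $n=3$ coincides with that forbidden by Lemma~\ref{3tri}. Hence $n\ge 4$.

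Since $T_i$ is the unique triangle containing $p_i$, Lemma~\ref{4co} supplies a $4$-cocircuit $D_i$ with $D_i\cap T_i=\{q_i,r_i\}$. By Lemma~\ref{4CoCctNoTriangle}, $D_i$ contains no triangle, and orthogonality then forces $|D_i\cap T|\in\{0,2\}$ for every triangle $T$ of $M$. The key dichotomy now concerns whether any $D_i$ meets another cycle triangle. If $D_i$ meets $T_{i-1}$ or $T_{i+1}$ in two elements, then $D_i$ with the relevant $C^*_{i-1}$ or $C^*_i$ yields a pair of distinct $4$-cocircuits each meeting two disjoint triangles in two elements, contradicting Lemma~\ref{C1C2_intersection_not_2}. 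If $D_i$ meets some $T_j$ with $j\notin\{i-1,i,i+1\}$ in two elements, then closing one of the two arcs of the original cycle between $T_i$ and $T_j$ by the chord $D_i$ produces a new cycle of length strictly less than $n$: the distinctness condition at $T_i$ is automatic, since $\{q_i,r_i\}$ differs from both $\{p_i,q_i\}$ and $\{p_i,r_i\}$; and at $T_j$ it holds on at least one of the two arcs, because the single $2$-subset $D_i\cap T_j$ cannot equal both distinct $2$-subsets $C^*_{j-1}\cap T_j$ and $C^*_j\cap T_j$. One also checks that $D_i$ is distinct from each $C^*_k$ by inspecting intersections with $T_i$. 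In either situation the minimality of $n$ is contradicted.

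It remains to handle the case in which, for every $i$, the two elements of $D_i\setminus T_i$ lie in no $T_j$. Lemma~\ref{minCond} places each such element in some triangle of $M$, and orthogonality together with $|D_i\cap T|\ne 1$ forces the two outside elements of $D_i$ to lie together in a common new triangle $T'_i\subseteq E(M)\setminus X$, where $X=T_1\cup\cdots\cup T_n$. Ruling out this \emph{isolated} configuration is the chief technical obstacle of the proof. The plan is to iterate the Lemma~\ref{4co} construction at each $T'_i$: since every element of $T'_i$ is also in only one triangle of $M$, two further $4$-cocircuits arise from Lemma~\ref{4co}, meeting $T'_i$ in the remaining two $2$-subsets. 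The same trichotomy applies to each of these via Lemmas~\ref{4CoCctNoTriangle} and~\ref{C1C2_intersection_not_2}: a non-isolated new cocircuit either contradicts Lemma~\ref{C1C2_intersection_not_2} directly or serves as a chord, whereupon the arc-shortening argument of the previous paragraph (adapted to the expanded family of triangles) produces a cyclic configuration of length strictly less than $n$ and so contradicts minimality; an isolated new cocircuit spawns yet another new triangle disjoint from all existing ones. Because $M$ is finite but the isolated branch strictly enlarges the family of triangles at each step, this recursive expansion must terminate in a chord-producing step, which after careful bookkeeping of the resulting arcs supplies the final contradiction.
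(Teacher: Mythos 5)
Your reduction to $n\ge 4$ and your ``chord'' case are sound: when some $D_i$ (the cocircuit from Lemma~\ref{4co}) meets a second triangle of the cycle, the comparison with $C^*_{i-1}$ or $C^*_i$ via Lemma~\ref{C1C2_intersection_not_2}, and otherwise the two-arc argument (distinctness at $T_i$ automatic, distinctness at $T_j$ on at least one arc), do produce a configuration shorter than $n$, in the same spirit as the paper's shortening step. The gap is in your final case, where every $D_i$ is ``isolated'' and its two outside elements span a new triangle disjoint from $T_1\cup\dots\cup T_n$. Your plan is to iterate Lemma~\ref{4co} at the new triangles and argue that finiteness forces a chord eventually, ``which after careful bookkeeping supplies the final contradiction.'' But the contradiction you are working toward is minimality of $n$, and a chord arising deep in the recursively grown family closes a cycle through a path of new triangles (and possibly part of the original cycle) whose length has no reason to be less than $n$; it can easily exceed $n$, and a configuration of length $\ge n$ contradicts nothing, since the standing assumption is only that some configuration exists and $n$ is the least length. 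Nothing in your construction forces the eventual chord to land near where it started, so the recursion does not terminate in a contradiction as claimed.

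The paper avoids this by a different mechanism that you do not have: it contracts two elements of $T_1$ and deletes the third, shows the resulting minor has no triad, and then uses the minor-minimality of $M$ to conclude that this minor fails to be simple, which yields a $4$-\emph{circuit} $C$ through two elements of $T_1$. Orthogonality of $C$ with the ring cocircuits $C^*_1,C^*_2,$ and $C^*_n$ pins the other two elements of $C$ into $T_2$ and $T_n$, the triangles adjacent to $T_1$; a cocircuit from Lemma~\ref{4co} at $T_2$ must then (again by orthogonality with $C$) meet $T_n$, producing a ring on $T_2,\dots,T_n$ of length $n-1$ (with a symmetric-difference adjustment in one degenerate subcase). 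It is the circuit obtained from the failure of simplicity, not further cocircuits, that guarantees the new connection stays adjacent to $T_1$ and hence that the cycle genuinely shortens. Your proposal never exploits the failure of simplicity of any minor, which is exactly the ingredient needed to close the isolated case; as written, that case remains unproved.
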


\begin{proof}
    Assume that there is such an integer. Choose a least such integer $n$. By Lemmas \ref{4CocctInd} and \ref{3tri}, $n \geq 4$. Let $T_i = \{x_{i1}, x_{i2}, x_{i3}\}$ for all $i$. We can shuffle the labels within each $T_i$ such that, for $j < n$, the cocircuit $C_j^*$ is $\{x_{j2},x_{j3}, x_{(j+1)2}, x_{(j+1)3}\}$ when $j$ is odd and $\{x_{j1},x_{j2}, x_{(j+1)1}, x_{(j+1)2}\}$ when $j$ is even. Moreover, we can take $C_n^*$ to be $\{x_{n2}, x_{n3},x_{11},x_{12}\}$ when $n$ is odd and $\{x_{n1}, x_{n2}, x_{11}, x_{12}\}$ when $n$ is even. Now consider $M/x_{11},x_{13} \backslash x_{12}$. Suppose it has a triad $T^*$. Then $T^* \cup x_{12}$ is a cocircuit of $M$ that meets $T_1$ in a single element, a contradiction. By the minimality of $M$, it has a $4$-circuit $C$ that contains $\{x_{11}, x_{13}\}$. By orthogonality, $C$ contains $x_{22}$ or $x_{23}$ and $C$ also contains a member of $\{x_{n2}, x_{n3}\}$ when $n$ is odd and a member of $\{x_{n1},x_{n2}\}$ when $n$ is even. The cocircuit $\{x_{21}, x_{22},x_{31},x_{32}\}$ implies that $x_{22} \not\in C$, so $x_{23} \in C$. Thus $C = \{x_{11},x_{13},x_{23},x_{n\alpha}\}$ for some $\alpha$ in $\{1,2,3\}$. Now, by Lemma~\ref{4co}, $M$ has a $4$-cocircuit $D^*$ that contains $\{x_{21},x_{23}\}$. By Lemma~\ref{4CocctInd}, this cocircuit does not meet $T_1$, Thus $x_{n\alpha} \in D^*$ so $|D^* \cap T_n| = 2$. The minimality of $n$ is contradicted unless $D^* \cap T_n = T_n \cap C^*_{n-1}$. In the exceptional case, $D^* \Delta C_{n-1}^*$ is a $4$-cocircuit $D^*_{n-1}$ of $M$ that meets $T_{n-1}$ in exactly two elements. Then the triangles $T_2,T_3,\dots,T_{n-2},$ and $T_{n-1}$ and the cocircuits $C_2^*,C_3^*,\dots,C_{n-2}^*,$ and $D^*_{n-1}$ violate the minimality of $n$. 
\end{proof}

\begin{lemma}\label{no222}
    Let $M$ be a minor-minimal ternary matroid in $\mathcal{M}_4$. Suppose that $M$ has $\{x_1,x_2,x_4,x_5\}$ as a cocircuit and has $\{x_1,x_2,x_3\}, \{x_1,x_4,x_6\}$, and $\{x_2,x_4,x_7\}$ as triangles. Then $M$ is isomorphic to $F_7^-,P_7,$ or $M(K_5)$.
\end{lemma}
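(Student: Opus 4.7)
The plan is to suppose for contradiction that $M \not\cong F_7^-, P_7, M(K_5)$ and to produce a third triangle through $x_1$, contradicting Lemma \ref{NoThreeTriangles}. The hypothesis is symmetric in $\{x_1, x_2, x_4\}$ under permutations such as $(x_1\,x_2)(x_6\,x_7)$, so this symmetry will let me reduce to a single case.

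First I would invoke Lemma \ref{MainTheorem:2conn} to conclude that $M$ is either $3$-connected or isomorphic to $H_{12}$. The latter must be ruled out by hand: the triangles of $H_{12} = O_7 \oplus_2 O_7$ are precisely the triangles of $O_7$ not through the basepoint $p$, and within each copy these consist of the four $3$-subsets of the unique $4$-point line together with one further $3$-point line. A short inspection confirms that no three such triangles can pairwise meet in exactly one element with three distinct pairwise intersections; since the three triangles in our hypothesis must all lie in a single copy (triangles in different copies being disjoint), $H_{12}$ does not satisfy the hypothesis. Hence $M$ is $3$-connected.

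Second I would apply Lemma \ref{minCond} to obtain a triangle $T_5$ containing $x_5$. Orthogonality with the cocircuit $\{x_1, x_2, x_4, x_5\}$ forces $T_5$ to meet it in at least two elements, so $T_5$ contains one of $x_1, x_2, x_4$; by the symmetry above I may assume $x_1 \in T_5$. I would also need to check that $x_5 \notin \{x_3, x_6, x_7\}$: if $x_5$ equalled any of these, then $\{x_1, x_2, x_4, x_5\}$ would contain one of the hypothesised triangles, forcing $M \cong P_7$ or $H_{12}$ by Lemma \ref{4CoCctNoTriangle}, both of which have already been excluded.

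Finally, writing $T_5 = \{x_1, x_5, y\}$, the fact that $x_5 \notin \{x_2, x_3, x_4, x_6\}$ rules out $T_5 = \{x_1, x_2, x_3\}$ and $T_5 = \{x_1, x_4, x_6\}$, so $T_5$ is a third triangle through $x_1$. Lemma \ref{NoThreeTriangles} then forces $M$ to be one of the three excluded matroids, the desired contradiction. The most delicate step is the $H_{12}$ check, since it requires a direct inspection of the combinatorial structure of a $2$-sum rather than an appeal to the uniform machinery that governs the $3$-connected case; the remainder of the argument is a short chain of orthogonality together with invocations of Lemmas \ref{4CoCctNoTriangle} and \ref{NoThreeTriangles}.
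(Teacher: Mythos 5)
Your proposal is correct and follows essentially the same route as the paper: the published proof is exactly the observation that a triangle through $x_5$ must, by orthogonality with the cocircuit $\{x_1,x_2,x_4,x_5\}$, contain one of $x_1,x_2,x_4$, so that element lies in at least three triangles, contradicting Lemma~\ref{NoThreeTriangles}. Your two preliminary checks---disposing of the $H_{12}$ case so that Lemma~\ref{NoThreeTriangles} (stated only for $3$-connected $M$) applies, and confirming $x_5\notin\{x_3,x_6,x_7\}$ via Lemma~\ref{4CoCctNoTriangle} so the new triangle is genuinely a third one---are details the paper leaves implicit, so they are careful additions to the same argument rather than a different approach.
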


\begin{proof}
    Assume that $M$ is not isomorphic to $F_7^-,P_7$ or $M(K_5)$. Since $M$ has a triangle containing $x_5$, by orthogonality, this triangle contains $x_1,x_2$, or $x_4$. Thus one of those elements is in at least three triangles, a contradiction to Lemma~\ref{NoThreeTriangles}.
\end{proof}
 
\begin{figure}[t]
    \centering
\[
\begin{blockarray}{ccccccccccc}
 & b_{r+1} & a_1 & a_2 & a_3 & \dots  &a_{r-1} & a_r & a_{r+1}\\
\begin{block}{c(cccccccccc)}
  b_1     & 1 & 1 & 0 & 0 & & 0 & 1 & 0\\
  b_2     & 1 & 1 & 1 & 0 & & 0 & 1 & 1\\
  b_3     & 1 & 0 & 1 & 1 & & 0 & 1 & 1\\
  b_4     & 1 & 0 & 0 & 1 & & 0 & 1 & 1\\
  \vdots  &   &   &   &   & &   &   &  \\
  b_{r-1} & 1 & 0 & 0 & 0 & & 1 & 1 & 1\\
  b_r     & 1 & 0 & 0 & 0 & & 1 & 0 & 1\\
\end{block}
\end{blockarray}
 \]
    \caption{The matrix $A_r$.}
    \label{Ar}
\end{figure}

For $r \geq 4$, let $M_r$ be the matroid that is represented over $GF(3)$ by the matrix $[I_r|A_r]$ where $A_r$ is the ternary matrix shown in Figure \ref{Ar}. We omit the routine proof of the following result.

\begin{lemma}\label{mrsub1}
    For all $r \geq 6$, $$M_r / a_1,a_2 \backslash b_1,b_2 \cong M_{r-2}.$$
\end{lemma}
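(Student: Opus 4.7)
The strategy is a direct computation with the $GF(3)$-representation $[I_r\mid A_r]$ of $M_r$. Contracting an element whose column in $A_r$ is a sum of two standard basis vectors can be realized by a single pivot followed by deleting the pivot row and column; a subsequent deletion of the element that has become parallel under the contraction is then straightforward. Column $a_1$ of $A_r$ equals $e_{b_1}+e_{b_2}$ and column $a_2$ equals $e_{b_2}+e_{b_3}$, so both contractions are of this simple form, and the hypothesis $r\ge 6$ enters only to ensure $r-2\ge 4$ so that $M_{r-2}$ is defined.

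First I would contract $a_1$ by pivoting on the entry in row $b_1$, column $a_1$: replace row $b_2$ by (row $b_2$)$-$(row $b_1$) over $GF(3)$, then delete row $b_1$ and column $a_1$. Column $b_1$ thereby becomes $-e_{b_2}$, parallel to $b_2$, so subsequently deleting $b_1$ simply removes this parallel element. Tracking the remaining columns, the pivot kills the row-$b_2$ entries of $b_{r+1}$ and $a_r$ (both equal to $1$ in row $b_2$ and in row $b_1$) and leaves the row-$b_2$ entries of $a_2$ and $a_{r+1}$ equal to $1$; the entries of $a_3,\ldots,a_{r-1}$ in row $b_2$ were already $0$ and remain so.

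Next I would contract $a_2$, whose column is still $e_{b_2}+e_{b_3}$, by pivoting on the entry in row $b_2$, column $a_2$: replace row $b_3$ by (row $b_3$)$-$(row $b_2$), then delete row $b_2$ and column $a_2$. Analogously, $b_2$ becomes parallel to $b_3$ and is removed. The key bookkeeping is the effect on row $b_3$ of the surviving columns: after the first pivot, $a_{r+1}$ had $1$ in row $b_2$, so the second pivot kills its row-$b_3$ entry; the columns $b_{r+1},a_3,a_r$ had $0$ in row $b_2$, so their row-$b_3$ entries (each equal to $1$) are preserved.

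Finally I would read off the resulting matrix, whose rows are $b_3,\ldots,b_r$ and whose columns are $b_3,\ldots,b_r,b_{r+1},a_3,\ldots,a_{r+1}$, and relabel $b_k\mapsto b'_{k-2}$ for $3\le k\le r$, $b_{r+1}\mapsto b'_{r-1}$, $a_k\mapsto a'_{k-2}$ for $3\le k\le r-1$, $a_r\mapsto a'_{r-2}$, and $a_{r+1}\mapsto a'_{r-1}$. A column-by-column check shows this is exactly $[I_{r-2}\mid A_{r-2}]$, yielding $M_r/a_1,a_2\backslash b_1,b_2\cong M_{r-2}$. There is no structural obstacle; the only place an error could arise is in the bookkeeping for rows $b_2$ and $b_3$ across the two pivots, which is precisely what the two preceding paragraphs handle.
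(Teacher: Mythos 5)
Your computation is correct: the two pivots and the deletion of the parallel elements $b_1,b_2$ are tracked accurately, and the relabelled matrix is exactly $[I_{r-2}\mid A_{r-2}]$. The paper omits this proof as ``routine,'' and your argument is precisely the intended routine verification, so there is nothing to add.
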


For all $i$ in $[r+1]$, we see that $\{b_i,a_i,b_{i+1}\}$ is a triangle of $M_r$, and $\{a_i,b_{i+1}, b_{i+2}, a_{i+2}\}$ is a cocircuit where all subscripts are interpreted modulo $r+1$.

\begin{figure}
    \centering
    \begin{tikzpicture}[tight/.style={inner sep=1pt}, loose/.style={inner sep=.7em}, scale=0.55]
        \coordinate (bl) at (3.5,0);
        \draw[fill=black] (bl) circle (6pt);
        
        \coordinate (br) at (8.5,0);
        \draw[fill=black] (br) circle (6pt);
        
        \coordinate (ml) at (2,4);
        \draw[fill=black] (ml) circle (6pt);
        
        \coordinate (mr) at (10,4);
        \draw[fill=black] (mr) circle (6pt);
        \coordinate (top) at (6,7);
        
        \draw[fill=black] (top) circle (6pt);
    
    \draw (ml) edge["$b_1$", tight]  (top)
          (top) edge["$b_2$", tight] (mr)
          (mr) edge["$b_3$", tight] (br)
          (br) edge["$b_4$", tight] (bl)
          (bl) edge["$b_5$", tight] (ml);

    \draw (br) edge["$a_3$", tight] (top)
          (top) edge["$a_5$", tight] (bl);
    \draw (ml) edge["$a_1$", tight] (mr);
    \draw (br) edge["$a_4$", tight] (ml);
    \draw (mr) edge["$a_2$", tight] (bl);
        
    \end{tikzpicture}
    \caption{$K_5.$}
    \label{fig:new}
\end{figure}

\begin{lemma}\label{mrsub2}
$M_4 \cong M(K_5)$ and $M_5 \cong M(K_{2,2,2})$.
\end{lemma}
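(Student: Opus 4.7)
The plan is to recognise $M_r$ as the cycle matroid of an explicit graph $G_r$ on $r+1$ vertices. Define $G_r$ with vertex set $\{v_1, v_2, \ldots, v_{r+1}\}$ and edge set $\{b_i = v_iv_{i+1}, a_i = v_iv_{i+2} : i \in [r+1]\}$, with all subscripts taken modulo $r+1$. Two immediate graph identifications can be read off: $G_4$ has five vertices whose $b$-edges form a $5$-cycle and whose $a$-edges form the pentagram, accounting for all $\binom{5}{2}=10$ edges, so $G_4 \cong K_5$. For $G_5$, the three edges missing from $K_6$ are precisely the long diagonals $v_iv_{i+3}$ for $i \in \{1, 2, 3\}$, which form a perfect matching, so $G_5 \cong K_{2,2,2}$ with antipodal pairs $\{v_1,v_4\}, \{v_2,v_5\}, \{v_3,v_6\}$.

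It remains to show $M_r = M(G_r)$ as matroids on the common ground set. The Hamiltonian path $T = v_1v_2\cdots v_{r+1}$ is a spanning tree of $G_r$ whose edges are precisely $\{b_1, \ldots, b_r\}$, the standard basis of $M_r$. Computing the fundamental cycles of $G_r$ with respect to $T$, one sees that the chord $a_i$ with $1 \le i \le r-1$ closes the triangle $\{a_i, b_i, b_{i+1}\}$, while the wraparound edges $a_r = v_rv_1$, $a_{r+1} = v_{r+1}v_2$, and $b_{r+1} = v_{r+1}v_1$ close the longer cycles $\{a_r, b_1, \ldots, b_{r-1}\}$, $\{a_{r+1}, b_2, \ldots, b_r\}$, and $\{b_{r+1}, b_1, \ldots, b_r\}$, respectively. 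Reading the supports of the columns of $A_r$ directly from Figure~\ref{Ar}, these match the fundamental cycles of $G_r$ exactly.

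To lift this support match to a matroid equality, orient each edge of $G_r$ by sending $b_i$ from $v_i$ to $v_{i+1}$ and $a_i$ from $v_i$ to $v_{i+2}$ (indices modulo $r+1$), and form the signed incidence matrix of $G_r$ over $GF(3)$. Removing the row indexed by $v_{r+1}$ produces a full-rank matrix whose $b_1,\ldots,b_r$ block is lower bidiagonal with diagonal $-1$ and subdiagonal $+1$; its inverse over $GF(3)$ is the lower triangular matrix of all $-1$'s. Left-multiplying by this inverse brings the matrix into the form $[I_r \mid A']$, a representation of $M(G_r)$. A direct computation shows $A'$ agrees with $A_r$ except on the three columns indexed by $b_{r+1}, a_r, a_{r+1}$, which are negated. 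Since negating columns does not change the represented matroid, $M_r = M(G_r)$, and specialising to $r \in \{4, 5\}$ gives the two claimed isomorphisms.

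The only real work is the $r \times (r+3)$ linear-algebra verification of the final step, which is short once the edge orientation is fixed. There is no conceptual obstacle; the lemma is essentially a bookkeeping check that the ternary matrix $A_r$ encodes the cycle space of $G_r$.
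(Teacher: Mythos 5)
Your proof is correct, but it takes a genuinely different route from the paper. You identify $M_r$ outright as the cycle matroid of the square of an $(r+1)$-cycle: the graph $G_r$ you define is indeed $K_5$ when $r=4$ and $K_6$ minus a perfect matching, i.e.\ $K_{2,2,2}$, when $r=5$, and your network-matrix computation is sound --- with the Hamiltonian path $b_1,\dots,b_r$ as spanning tree, every fundamental cycle traverses its tree edges coherently, so after row-reducing the signed incidence matrix (your lower-bidiagonal block and its all-$(-1)$'s lower-triangular inverse are right) each chord column of the resulting $[I_r\mid A']$ is the corresponding $0$--$1$ column of $A_r$ up to a global sign, and column negation does not change the represented matroid. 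You were also right not to stop at matching circuit supports, which alone would not force matroid equality over $GF(3)$. The paper argues quite differently and only for $r\in\{4,5\}$: for $M_4$ it checks that the triangles of $K_5$ are circuits, deduces that every single-element contraction (and hence $M_4$ itself) is binary, concludes $M_4$ is regular, and then invokes Heller's extremal bound $|E|=\binom{r+1}{2}$ to force $M_4\cong M(K_5)$; for $M_5$ it checks the triangle pattern of Figure~\ref{fig:k222} and applies the uniqueness statement of Lemma~\ref{tiedown}. Your argument is more elementary and more general --- it shows $M_r\cong M(G_r)$ for every $r\ge 4$ by one explicit linear-algebra check, with no appeal to Heller or to Lemma~\ref{tiedown} --- while the paper's version buys brevity by recycling machinery it has already built. (Minor bookkeeping point: $A_r$ has $r+2$ columns, not $r+3$; this does not affect anything.)
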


\begin{proof}
 It is straightforward to check that each of the triangles in the graph $K_5$ in Figure \ref{fig:new} is a circuit in $M_4$. It follows that $M_4/e$ is binary for all $e$. Thus $M_4$ is binary as $r(M_4) = 4$. Therefore, as $M_4$ is ternary,  $M_4$ is regular.  Since $|E(M_4)| = 10 =$ $ \binom{r(M_{4})+1}{2}$, it follows by a result of Heller [\ref{heller}] that $M_4 \cong M(K_5)$. It is straightforward to check that the triangles of $M_5$ have the structure of Figure \ref{fig:k222}. Thus, by Lemma \ref{tiedown}, $M_5 \cong M(K_{2,2,2})$.
\end{proof}

\begin{lemma}\label{new-1}
    Let $M$ be a minor-minimal ternary matroid in $\mathcal{M}_4$. Assume that $M$ is not isomorphic to $M(K_5)$ or $M(K_{2,2,2})$. For some $k \geq 4$, let $y_1,z_1,y_2,z_2,\dots,y_k,z_k,y_{k+1}$ be a sequence of distinct elements of $M$ such that $\{y_i,z_i,y_{i+1}\}$ is a triangle for all $i$ in $[k]$ and $\{z_j,y_{j+1},y_{j+2},z_{j+2}\}$ is a cocircuit for all $j$ in $[k-2]$. Assume that none of $z_1,z_2,\dots,z_{k-1}, $ or $z_k$ is in more than one triangle of $M$. Then $M$ has elements $z_{k+1}$ and $y_{k+2}$ that are not in $\{y_1,z_1,y_2,\dots,y_k,z_k,y_{k+1}\}$ such that $\{y_{k+1}, z_{k+1}, y_{k+2}\}$  is a triangle, $\{z_{k-1},y_k, y_{k+1},z_{k+1}\}$ is a cocircuit, and $z_{k+1}$ is in only one triangle of $M$.
\end{lemma}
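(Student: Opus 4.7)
The plan is to produce the new cocircuit via Lemma \ref{4co}, then the new triangle through $z_{k+1}$ via Lemma \ref{minCond}, and finally to verify that both new elements lie outside the given sequence; the main obstacle is the last step for $y_{k+2}$.

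First, because $z_k$ is in the unique triangle $T_k = \{y_k, z_k, y_{k+1}\}$, Lemma \ref{4co} gives a $4$-cocircuit $C^*$ meeting $T_k$ in $\{y_k, y_{k+1}\}$. Orthogonality with $T_{k-1}$ puts $y_{k-1}$ or $z_{k-1}$ into $C^*$. In the former case, orthogonality with $T_{k-2}$ forces the fourth element to be $y_{k-2}$ or $z_{k-2}$; orthogonality with $T_{k-3}$ rules out $y_{k-2}$ (using $k \ge 4$), and $z_{k-2}$ combines with the hypothesis cocircuit $C_{k-2}^* = \{z_{k-2}, y_{k-1}, y_k, z_k\}$ through strong circuit elimination in $M^*$ to make all five $4$-subsets of $\{z_{k-2}, y_{k-1}, y_k, y_{k+1}, z_k\}$ circuits of $M^*$; this forces a $U_{3,5}$-restriction of $M^*$, hence a $U_{2,5}$-minor of $M$, contradicting ternariness. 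So $C^* = \{z_{k-1}, y_k, y_{k+1}, \beta\}$, and orthogonality with each $T_i$ ($i \in [k-2]$) forces $\beta \notin \{y_1, z_1, \ldots, y_{k-1}\}$; since also $\beta \ne z_k$ by construction, $\beta$ is a new element that we call $z_{k+1}$.

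Next, by Lemma \ref{minCond}, $z_{k+1}$ lies in a triangle $T$, and orthogonality with $C^*$ forces $T$ to contain one of $z_{k-1}, y_k, y_{k+1}$. The $z_{k-1}$ option contradicts the hypothesis that $z_{k-1}$ is in only one triangle. The $y_k$ option reduces, via orthogonality with $C_{k-2}^*$, to the third element of $T$ being $z_{k-2}$, $y_{k-1}$, or $z_k$; the two $z$-cases violate the uniqueness hypothesis on the corresponding $z$, and $y_{k-1}$ puts $y_k$ in three triangles, forbidden by Lemma \ref{NoThreeTriangles} since the hypothesis precludes $M(K_5)$ and element-count rules out $F_7^-$ and $P_7$ (the sequence provides at least $2k+1 \ge 9$ elements, while Lemma \ref{MainTheorem:2conn} gives $3$-connectivity of $M$, with $M \cong H_{12}$ excluded by a separate direct check). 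So $T = \{y_{k+1}, z_{k+1}, y_{k+2}\}$. Uniqueness of $T$ among triangles through $z_{k+1}$ follows from the same analysis plus Lemma \ref{No4PtLine}, which prevents two triangles sharing the pair $\{y_{k+1}, z_{k+1}\}$.

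Finally, orthogonality of $T$ with the cocircuits $C_j^*$ ($j \in [k-2]$) forces $y_{k+2}$ out of $\{z_1, \ldots, z_k, y_2, \ldots, y_k\}$, and orthogonality with $C^*$ further excludes $\{z_{k-1}, y_k\}$; combined with $y_{k+2} \ne y_{k+1}, z_{k+1}$, this leaves only $y_{k+2} = y_1$. To rule this out, assume it holds and apply Lemma \ref{4co} to $z_{k+1}$ (now in the unique triangle $T = \{y_{k+1}, z_{k+1}, y_1\}$); orthogonality pins the resulting cocircuit down to $F^* = \{y_1, z_1, y_{k+1}, z_k\}$. A symmetric argument applied to $z_1$ yields a cocircuit $R^* = \{y_1, y_2, z_2, z_{k+1}\}$. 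Under the identification $y_i \leftrightarrow b_i$ and $z_i \leftrightarrow a_i$, the triangles $T_1, \ldots, T_k, T$ and cocircuits $C_1^*, \ldots, C_{k-2}^*, C^*, F^*, R^*$ together reproduce the full triangle and $4$-cocircuit structure of the matroid $M_k$ on $X = \{y_1, z_1, \ldots, y_{k+1}, z_{k+1}\}$; since $M$ is ternary and this structure determines $M_k$, the restriction $M|X$ is isomorphic to $M_k$. Lemmas \ref{mrsub1} and \ref{mrsub2} then produce $M(K_5)$ or $M(K_{2,2,2})$ as a minor of $M_k$ and hence of $M$. Since both are in $\mathcal{M}_4$ and $M$ is not isomorphic to either, this contradicts the minor-minimality of $M$. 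The hardest step is verifying, via the iterated use of Lemma \ref{4co}, that the full $M_k$-structure must in fact be present in the case $y_{k+2}=y_1$; the remaining steps are case analyses on orthogonality.
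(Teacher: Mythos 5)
Your argument tracks the paper closely up to the point where $y_{k+2}=y_1$: obtaining the cocircuit $\{z_{k-1},y_k,y_{k+1},z_{k+1}\}$ (via Lemma \ref{4co}, which is the same device as the paper's triad of $M/z_k\backslash y_{k+1}$), eliminating the bad positions for the fourth element by orthogonality and a $U_{3,5}$-restriction of $M^*$, locating the triangle $T=\{y_{k+1},z_{k+1},y_{k+2}\}$, and reducing to $y_{k+2}=y_1$ are all correct (one small slip: $y_{k+2}\in\{z_{k-1},y_k\}$ is not excluded by orthogonality with $C^*$, since $|T\cap C^*|=3$ is allowed; it is excluded by your earlier case analysis and the one-triangle hypothesis, so the conclusion stands).

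The genuine gap is in the final case. You assert that, after also extracting $F^*$ and $R^*$, ``this structure determines $M_k$, [so] the restriction $M|X$ is isomorphic to $M_k$.'' That inference is false as stated. The cycle matroid of the wheel $W_{k+1}$, with $y_i$ the spokes and $z_i$ the rim edges, has exactly the same data: the $k+1$ triangles $\{y_i,z_i,y_{i+1}\}$ (indices mod $k+1$) and all $k+1$ four-element cocircuits $\{z_j,y_{j+1},y_{j+2},z_{j+2}\}$ (each is the bond separating two consecutive rim vertices), yet it is a simple ternary matroid of rank $k+1$, not isomorphic to the rank-$k$ matroid $M_k$ (in $M_k$ the $y_i$'s are dependent; in the wheel the spokes are a spanning tree). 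So triangles, these $4$-cocircuits, and ternariness do not pin down the isomorphism type; what rules out the wheel-like possibility is precisely the global information you never establish: that $E(M)=X$ and $r(M)=k$. In the paper this is the content of sublemma \ref{new-1sub}: the $k$ cocircuits inside $V$ give $r^*(V)\le k+2$, hence $\lambda(V)\le 1$; $3$-connectivity gives $|E(M)-V|\le 1$; a triangle-saturation argument (every $y_i$ already in two triangles, every $z_i$ in one, via Lemma \ref{NoThreeTriangles}) forces $E(M)=V$; then $r(V)=k+1$ is excluded because it would create a triad, and only then does one know $\{y_1,\dots,y_{k+1}\}$ is a circuit, after which an explicit ternary representation computation (Figures \ref{PartialRepn} and \ref{repn4}) identifies $M$ with $M_k$ so that Lemmas \ref{mrsub1} and \ref{mrsub2} apply. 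Note also that, without first proving $E(M)=X$, the listed $4$-sets being cocircuits of $M$ does not even mean they are cocircuits of $M|X$, so the ``structure on $X$'' you invoke is not inherited in the form you use it. You flag this as the hardest step, but iterating Lemma \ref{4co} to collect more cocircuits (such as $R^*$) cannot close it, since the wheel configuration satisfies all of those cocircuit conditions too; the connectivity and representation arguments are genuinely needed.
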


\begin{proof}
    As $M/ z_k \backslash y_{k+1}$ is simple, it has a triad $T^*_{k+1}$. Then $T^*_{k+1} \cup y_{k+1}$ is a cocircuit of $M$ and $y_k \in T^*_{k+1}$. By orthogonality, $z_{k-1}$ or $y_{k-1}$ is in $T^*_{k+1}$. Suppose $y_{k-1} \in T^*_{k+1} $. Then $z_{k-2}$ or $y_{k-2}$ is in $T^*_{k+1}$. In the latter case, the triangle $\{y_{k-3},z_{k-3},y_{k-2}\}$ contradicts orthogonality. In the former case, $M^*|\{z_{k-2},y_{k-1},y_k,z_k,y_{k+1}\} \cong U_{3,5}$, a contradiction. Then $y_{k-1} \not\in T^*_{k+1}$, so $z_{k-1} \in T^*_{k+1}$. Let $T^*_{k+1} - \{z_{k-1},y_k\} = \{z_{k+1}\}$. By orthogonality and Lemma~\ref{No4PtLine},  $z_{k+1} \not\in \{y_1,z_1,y_2,\dots,z_k,y_{k+1}\}$. Now $z_{k+1}$ is in a triangle $T$ of $M$. By Lemma~\ref{NoThreeTriangles}, $y_k$ is not in three triangles, and, by assumption, $z_{k-1}$ is in only one triangle, so $z_{k+1}$ is in only one triangle and this triangle avoids $\{z_{k-1},y_k\}$. Thus $y_{k+1} \in T$. Let $T - \{y_{k+1},z_{k+1}\} = \{y_{k+2}\}$. The known $4$-cocircuits of $M$ imply that $y_{k+2} \not\in \{y_1,z_1,y_2,z_2,\dots,y_{k+1},z_{k+1}\}$ unless $y_{k+2} = y_1$.

    \begin{sublemma}\label{new-1sub} If $y_{k+2} = y_1$, then $M$ is isomorphic to $M(K_5)$ or $M(K_{2,2,2})$.
    \end{sublemma}

    Assume $y_{k+2} = y_1$. Now $M / z_{k+1} \backslash y_{k+2}$ is simple and so has triad $T^*_{k+2}$. Then $T^*_{k+2} \cup y_{k+2}$ is a cocircuit of $M$ that contains $y_{k+1}$. Thus $z_k$ or $y_k$ is in $T^*_{k+2}$, and $z_1$ or $y_2$ is in $T^*_{k+2}$. The triangles $\{y_{k-1},z_{k-1},y_k\}$ and $\{y_2,z_2,y_3\}$ imply that $z_k$ and $z_1$ are in $T^*_{k+2}$, so $\{z_k,y_{k+1},y_1,z_1\}$ is a cocircuit of $M$.

    Let $V = \{y_1,z_1,\dots,y_{k+1},z_{k+1}\}$. Then $V$ is spanned by $\{y_1,y_2,\dots,y_{k+1}\}$.
    Moreover, we get   $\{z_1,y_2,y_3,z_3\}, \{z_2,y_3,y_4,z_4\},\dots,  \{z_{k-1},y_k,y_{k+1},z_{k+1}\},$\linebreak and
    $ \{z_k,y_{k+1},y_1,z_1\}$ as cocircuits of $V$, where each of which contains an element that is not in the union of its predecessors, so $r^*(V) \leq 2(k+1) - k$. Since $r(V) \leq k+1$, it follows that $\lambda(V) \leq 1$. Thus $|E(M) - V| \leq 1$. Assume $E(M) - V = \{w\}$. Then $w$ is in a triangle. But each of $y_1,y_2,\dots,y_{k+1}$ is already in two triangles, while each of $z_1,z_2,\dots,z_{k+1}$ is already in one triangle. Thus $w$ is not in a triangle, a contradiction. We deduce that $E(M) - V = \emptyset$, so $\lambda(V) = 0$. Hence $0 = r(V) + r^*(V) - 2(k+1)$. If $r(V) = k+1$, then $E(M) - \cl(\{y_1,y_2,\dots,y_k\}) \subseteq \{z_k,y_{k+1},z_{k+1}\}$, so $M$ has a triad, a contradiction. Hence $r(V) \leq k$. The $k$ cocircuits listed above imply that $r(V) \geq k$. Hence $r(V) = k = r(M)$. 

    We now construct a representation of $M$. Let $B \cup y_{k+1} = \{y_1,y_2,\dots,y_{k+1}\}$ where $B$ is the basis $\{y_1,y_2,\dots,y_k\}$. Then $B \cup y_{k+1}$ contains a circuit containing $y_{k+1}$. By orthogonality with the known $4$-cocircuits of $M$, we deduce that $\{y_1,y_2,\dots,y_{k+1}\}$ is a circuit of $M$. 

    \begin{figure}
    \centering
\[
\begin{blockarray}{ccccccccccc}
 & y_{k+1} & z_1 & z_2 & z_3 & \dots  &z_{k-1} & z_k & z_{k+1}\\
\begin{block}{c(cccccccccc)}
  y_1     & 1 & 1 & 0 & 0 & & 0 & 1 & w_{k+1}\\
  y_2     & 1 & v_2 & 1 & 0 & & 0 & 1 & 1\\
  y_3     & 1 & 0 & v_3 & 1 & & 0 & 1 & 1\\
  y_4     & 1 & 0 & 0 & v_4 & & 0 & 1 & 1\\
  \vdots  &   &   &   &   & &   &   &  \\
  y_{k-1} & 1 & 0 & 0 & 0 & & 1 & 1 & 1\\
  y_k     & 1 & 0 & 0 & 0 & & v_k & w_k & 1\\
\end{block}
\end{blockarray}
 \]
    \caption{A ternary representation for $M$.}
    \label{PartialRepn}
\end{figure}

From this representation, we now make it a ternary representation for $M$. By using the fundamental circuits with respect to $B$ and scaling the rows so that $y_{k+1}$ consists of all ones, we see that the matrix in Figure~\ref{PartialRepn} is a representation for $M$ where $v_2,v_3,\dots,v_k$ are all non-zero. The circuits $\{y_k,z_k,y_{k+1}\}$ and $\{y_{k+1},z_{k+1},y_1\}$ imply we may assume that the columns $z_k$ and $z_{k+1}$ are as shown, where $w_k$ and $w_{k+1}$ may be zero. Now, the cocircuits $\{z_{k-1},y_k,y_{k+1},z_{k+1}\}$ and $\{z_k,y_{k+1},y_1,z_1\}$ imply that $w_k=0~=w_{k+1}$. The cocircuit $\{z_1,y_2,y_3,z_3\}$ implies $v_3 = 1$. By symmetry, each of $v_4,v_5,\dots,v_k$ is $1$. Finally, the cocircuit $\{z_{k+1},y_1,y_2,z_2\}$ implies that $v_2 = 1$. Hence $M$ is represented as shown in Figure~\ref{repn4}.

By Lemma~\ref{mrsub2}, if $k$ is $4$ or $5$, then $M$ is isomorphic to $M(K_5)$ or $M(K_{2,2,2})$, respectively. By Lemma~\ref{mrsub1}, if $k \geq 6$, then $M$ has $M(K_5)$ or $M(K_{2,2,2})$ as a proper minor. The minimality of $M$ implies that $\ref{new-1sub}$ holds. Hence so does the lemma.
\end{proof}

    \begin{figure}
    \centering
\[
\begin{blockarray}{ccccccccccc}
 & y_{k+1} & z_1 & z_2 & z_3 & \dots  &z_{k-1} & z_k & z_{k+1}\\
\begin{block}{c(cccccccccc)}
  y_1     & 1 & 1 & 0 & 0 & & 0 & 1 & 0\\
  y_2     & 1 & 1 & 1 & 0 & & 0 & 1 & 1\\
  y_3     & 1 & 0 & 1 & 1 & & 0 & 1 & 1\\
  y_4     & 1 & 0 & 0 & 1 & & 0 & 1 & 1\\
  \vdots  &   &   &   &   & &   &   &  \\
  y_{k-1} & 1 & 0 & 0 & 0 & & 1 & 1 & 1\\
  y_k     & 1 & 0 & 0 & 0 & & 1 & 0 & 1\\
\end{block}
\end{blockarray}
 \]
    \caption{The matrix $A_r$ in Lemma \ref{new-1}.}
    \label{repn4}
\end{figure}   

\begin{lemma}\label{new3}
    Let $M$ be a $3$-connected minor-minimal ternary matroid in $\mathcal{M}_4$. Assume that $M$ has $\{x_1,x_2,x_4,x_5\}$ as a cocircuit and has $\{x_1,x_2,x_3\}, \linebreak\{x_1,x_4,x_6\}$, and $\{x_2,x_5,x_7\}$ as triangles, but $M$ has no triangle containing $\{x_4,x_5\}$. Then $M$ is isomorphic to $P_7, M^*(K_{3,3}), Q_9, M(K_5), $ or $M(K_{2,2,2})$.
\end{lemma}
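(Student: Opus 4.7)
The plan is to proceed by contradiction, assuming $M$ is none of $P_7, M^*(K_{3,3}), Q_9, M(K_5), M(K_{2,2,2})$, and to bootstrap the given configuration into the setup of Lemma~\ref{new-1}, whose iteration then forces $M\cong M(K_5)$ or $M\cong M(K_{2,2,2})$ via Lemmas~\ref{mrsub1} and~\ref{mrsub2} together with the minimality of $M$. First I would record the structural constraints available from earlier results: every 4-cocircuit is independent (Lemma~\ref{4CocctInd}) and contains no triangle (Lemma~\ref{4CoCctNoTriangle}); $M$ has no 4-point line (Lemma~\ref{No4PtLine}); no element lies in three triangles (Lemma~\ref{NoThreeTriangles}); no triangle contains $\{x_2,x_4\}$ or, by the symmetric role, $\{x_1,x_5\}$ (Lemma~\ref{no222}); and no 4-cocircuit other than $\{x_1,x_2,x_4,x_5\}$ meets both of the disjoint triangles $\{x_1,x_4,x_6\}$ and $\{x_2,x_5,x_7\}$ (Lemma~\ref{C1C2_intersection_not_2}). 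Relabel $y_1=x_6,\ z_1=x_4,\ y_2=x_1,\ z_2=x_3,\ y_3=x_2,\ z_3=x_5,\ y_4=x_7$; the given triangles and the cocircuit $\{x_1,x_2,x_4,x_5\}$ then give the hypothesis of Lemma~\ref{new-1} for $k=3$, whereas Lemma~\ref{new-1} requires $k\ge 4$. One extension step must therefore be carried out by hand.

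To bootstrap to $k=4$, I would first check that each of $x_3,x_4,x_5$ lies in a unique triangle of $M$; if some fails, a short case analysis using Lemma~\ref{NoThreeTriangles}, the no-4-point-line restriction, Lemma~\ref{C1C2_intersection_not_2}, and Lemma~\ref{no222} either produces a $U_{3,5}$-restriction of $M^*$ (impossible since $M$ is ternary) or forces $M$ into one of the named matroids. With $x_5$ in the unique triangle $\{x_2,x_5,x_7\}$, the matroid $M/x_5\backslash x_7$ is simple, so by the minimality of $M$ it has a triad $T^*$, and $T^*\cup x_7$ is a 4-cocircuit of $M$ avoiding $x_5$. Orthogonality with the three known triangles places $x_2\in T^*$ and forces $x_1$ or $x_3$ into $T^*$; the case $x_1\in T^*$ yields a cocircuit of the form $\{x_1,x_2,x_7,w\}$, and orthogonality with $\{x_1,x_4,x_6\}$ pins $w\in\{x_4,x_6\}$, but the choice $w=x_4$ gives (together with $\{x_1,x_2,x_4,x_5\}$) a $U_{3,5}$-restriction of $M^*$, while $w=x_6$ produces a second cocircuit meeting both disjoint triangles $\{x_1,x_4,x_6\}$ and $\{x_2,x_5,x_7\}$, contradicting Lemma~\ref{C1C2_intersection_not_2}. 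Hence $T^*\cup x_7=\{x_2,x_3,x_7,x_8\}$ with $x_8\notin\{x_1,\dots,x_7\}$; here $x_8=x_5$ is ruled out because $\{x_2,x_3,x_5,x_7\}$ would contain the triangle $\{x_2,x_5,x_7\}$, forcing $M\cong P_7$ by Lemma~\ref{4CoCctNoTriangle}. By Lemma~\ref{minCond} the new element $x_8$ lies in a triangle, and orthogonality together with the no-third-triangle restriction on $x_2,x_3$ identifies this triangle as $\{x_7,x_8,y_5\}$ for some fresh $y_5$. Setting $z_4=x_8$ now realizes the $k=4$ hypothesis of Lemma~\ref{new-1}.

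From here I would iterate Lemma~\ref{new-1}. At every stage the hypothesis requires the newest $z$-element to lie in a unique triangle of $M$; I would verify this by an orthogonality argument parallel to the base case, invoking Lemma~\ref{C1C2_intersection_not_2} to forbid any 4-cocircuit meeting two disjoint triangles of the current path and Lemma~\ref{no222} to forbid the specific branching configurations. Since $|E(M)|$ is finite, this extension cannot produce new elements indefinitely, so eventually the closure clause established in the proof of Lemma~\ref{new-1} (the case $y_{k+2}=y_1$) must occur. That clause forces $M\cong M_r$ for some $r$, and by Lemmas~\ref{mrsub1}, \ref{mrsub2} together with the minor-minimality of $M$ we conclude $M\cong M(K_5)$ or $M\cong M(K_{2,2,2})$, contradicting our standing assumption.

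The principal difficulty is the repeated verification, at every extension step, that the newest $z_i$ lies in a unique triangle of $M$. A second triangle through $z_i$ would branch the path and could in principle escape the orthogonality bookkeeping; controlling such branches uniformly, via Lemma~\ref{C1C2_intersection_not_2} to kill cocircuits meeting two disjoint path-triangles and Lemma~\ref{no222} to kill specific triangle–cocircuit configurations around the pair $(x_1,x_2)$ or its image under the iteration, is the central delicacy of the argument. Once this uniqueness is maintained along the whole path, the finiteness of $E(M)$ combined with Lemma~\ref{new-1} does the rest.
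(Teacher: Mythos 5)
Your overall skeleton is the same as the paper's: contract/delete to force a triad, obtain a new $4$-cocircuit (your $\{x_2,x_3,x_7,x_8\}$ is the mirror image, under $x_1\leftrightarrow x_2$, $x_4\leftrightarrow x_5$, $x_6\leftrightarrow x_7$, of the paper's $\{x_1,x_3,x_6,x_8\}$), then feed a nine-element path into Lemma~\ref{new-1} and iterate until finiteness forces $M(K_5)$ or $M(K_{2,2,2})$. However, there is a genuine gap at the step where you claim that orthogonality ``together with the no-third-triangle restriction on $x_2,x_3$'' identifies the triangle through $x_8$ as $\{x_7,x_8,y_5\}$ with $y_5$ fresh. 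By orthogonality with $\{x_2,x_3,x_7,x_8\}$, the triangle through $x_8$ contains $x_2$, $x_3$, or $x_7$; Lemma~\ref{NoThreeTriangles} rules out $x_2$ (it already lies in $\{x_1,x_2,x_3\}$ and $\{x_2,x_5,x_7\}$), but $x_3$ lies so far in only the one triangle $\{x_1,x_2,x_3\}$, so nothing you cite excludes a triangle $\{x_3,x_8,\alpha\}$, possibly with $\alpha$ a brand-new element. This is exactly the case that consumes most of the paper's proof: there the analogous triangle $\{x_3,x_8,\alpha\}$ cannot be killed by orthogonality or the structural lemmas; it is eliminated only after deriving a further cocircuit, a rank analysis split on $r(X)\in\{4,5\}$ and $r(M)$, an application of Theorem~\ref{GGWTutte} to cap the rank, and the explicit construction of a ternary representation of an extension $M'$ for which $M'/g\backslash e,f,h\cong Q_9$ yields the contradiction (with side exits to $Q_9$, $M^*(K_{3,3})$, and $M(K_{2,2,2})$ along the way). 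Your proposal contains no substitute for this argument, and for the same reason your preliminary claim that $x_3$ lies in a unique triangle ``by a short case analysis'' is unsupported: establishing that uniqueness is equivalent to resolving this hard case, and it is a hypothesis you need before Lemma~\ref{new-1} can be applied (in your labelling $x_3=z_2$).

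Two smaller points in the same direction: you also need $x_8=z_4$ to lie in a unique triangle and $y_5$ to be genuinely new; in the paper both require further work (Lemma~\ref{no222} applied to the new cocircuit, plus another triad argument with Lemmas~\ref{4CoCctNoTriangle} and \ref{C1C2_intersection_not_2} to show the third element of the $\{x_6,x_8\}$-triangle is not $x_7$), none of which appears in your sketch. On the other hand, your worry about re-verifying triangle-uniqueness at every later iteration is unnecessary: the conclusion of Lemma~\ref{new-1} already supplies the new $z_{k+1}$ lying in only one triangle, so once the $k=4$ base case is honestly established the iteration is self-sustaining, and the endgame is simply that repeated application would force $|E(M)|$ to be infinite unless $M\cong M(K_5)$ or $M(K_{2,2,2})$.
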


\begin{proof}
     Assume that $M$ is not isomorphic to $P_7, Q_9, M^*(K_{3,3})$, or $M(K_{2,2,2})$. By Lemma~\ref{4CocctInd}, $r(\{x_1,x_2,x_4,x_5\}) = 4$. Moreover, by Lemma~\ref{No4PtLine}, $M$ has no $4$-point line. Thus $x_7 \not = x_3$. Also $x_7 \not= x_6$ otherwise $r(\{x_1,x_2,x_4,x_5\}) = 3$, a contradiction. Thus $x_1,x_2,\dots,x_6, $ and $x_7$ are distinct. 

    By Lemma \ref{no222}, $M/x_4\backslash x_6$ is simple. By the minimality of $M$, the matroid $M/x_4 \backslash x_6$ has a triad $T_6^*$, and $T^*_6\cup x_6$ is a cocircuit of $M$ that avoids $x_4$. Thus $M$ has $\{x_1,x_2,x_5,x_6\}$, $\{x_1,x_2,x_6,x_7\}$, or $\{x_1,x_3,x_6,x_8\}$ as a cocircuit for some element $x_8$ not in $\{x_1,x_2,\dots,x_7\}$. The first two cases violate Lemma~\ref{C1C2_intersection_not_2}. Thus $M$ has $\{x_1,x_3,x_6,x_8\}$ as a cocircuit. Now $x_8$ is in a triangle $T$ of $M$. By Lemma~\ref{NoThreeTriangles}, $x_1 \not\in T$. Thus $x_3$ or $x_6$ is in $T$.

    First assume that $\{x_3,x_8\} \subseteq T$ and let $\alpha$ be the third element of $T$. We show next that $\alpha \not\in \{x_1,x_2,\dots,x_8\}$. Assume $\alpha \in \{x_1,x_2,\dots,x_8\}$. Then \linebreak $\alpha \in \{x_6,x_7\}$. By Lemma~\ref{4CoCctNoTriangle}, $\alpha \not= x_6$. Thus $\alpha = x_7$. Now $x_7$ is in a $4$-cocircuit $D^*$. By Lemma~\ref{no222}, $\{x_2,x_3,x_7\}\not\subseteq D^* $. By orthogonality, $x_2$ or $x_5$ is in $D^*$, and $x_3$ or $x_8$ is in $D^*$. Moreover, $x_1\not\in D^*$. Thus if $x_2$ or $x_3$ is in $D^*,$ then both are, a contradiction. Hence $\{x_5,x_7,x_8\} \subseteq D^*$. Let $D^* = \{x_5,x_7,x_8,\beta\}$. The triangles $\{x_1,x_2,x_3\}$ and $\{x_1,x_4,x_6\}$ imply that $\beta \not\in \{x_1,x_2,\dots,x_8\}$. Now $\beta$ is in a triangle $T'$. By Lemma~\ref{NoThreeTriangles} and the lemma's hypothesis, $x_7 \not\in T'$ and $x_5 \not\in T'$. Thus $x_8 \in T'$. Let $T' = \{\beta ,x_8,\gamma\}$. The cocircuit $\{x_1,x_3,x_6,x_8\}$ implies, by Lemma~\ref{NoThreeTriangles}, that $\gamma =~ x_6$, that is, $\{\beta ,x_6,x_8\}$ is a triangle. Let $X = \{x_1,x_2,\dots, x_8,\beta \}$. Then $r(X) = 4$ and $\lambda(X) \leq 4 + (9-3) - 9  = 1$, so $|E(M) - X| \leq 1$. As $r(\{x_1,x_2,x_3,x_5,x_7,x_8\})~ =~3$, we deduce that $|E(M) - X| = 1$ otherwise $M$ has a triad. Let $E(M) - X = \{\delta\}$. Deleting the cocircuits $\{x_1,x_2,x_4,x_5\}$ and $\{x_1,x_3,x_6.x_8\}$ from $M$ leaves $\{x_7,\beta,\delta\}$, which must be a triangle. Thus $x_7$ is in three triangles, a contradiction to Lemma~\ref{NoThreeTriangles}. We conclude that $\alpha \not\in\{x_1,x_2,\dots,x_8\}$.

    By the minimality of $M$, the simple matroid $M/x_5\backslash x_7$ has a triad $T^*_7$ and $T^*_7 \cup x_7$ is a cocircuit of $M$ that avoids $x_5$, so it contains $x_2$. Now $x_1 \not\in T^*_7$, otherwise $\{x_1,x_2,x_4,x_7\}$ or $\{x_1,x_2,x_6,x_7\}$ is a cocircuit of $M$. The first case gives the contradiction that $M^*$ has a $U_{3,5}$-minor, while the second case violates Lemma~\ref{C1C2_intersection_not_2}. We deduce that $\{x_2,x_3,x_7,x_8\}$ or $\{x_2,x_3,x_7,\alpha\}$ is a cocircuit. Suppose $\{x_2,x_3,x_7,x_8\}$ is a cocircuit. As $\{x_1,x_3,x_6,x_8\}$ is a cocircuit, eliminating $x_8$ gives a cocircuit contained in $\{x_1,x_2,x_3,x_6,x_7\}$. The triangle $\{x_3,x_8,\alpha\}$ implies that the cocircuit avoids $x_3$, so it must be $\{x_1,x_2,x_6,x_7\}$. This gives a contradiction to Lemma~\ref{C1C2_intersection_not_2}. We deduce that $\{x_2,x_3,x_7,\alpha\}$ is a cocircuit of $M$.

    Let $X = \{x_1,x_2,\dots,x_8,\alpha\}$. Then $r(X) \leq 5$ and $\lambda(X) \leq r(X) - 3$, so $\lambda(X) \leq 2$. Suppose $r(X) = 4$. Then $\lambda(X) \leq 1$, so $M$ has at most one element that is not in $X$. Assume that $E(M) - X = \{e\}$. Then $r(M) = 4$. As $M$ has $\{x_1,x_2,x_4,x_5\}, \{x_1,x_3,x_6,x_8\},$ and $\{x_2,x_3,x_7,\alpha\}$ as cocircuits, by considering the set obtained from $E(M)$ by deleting each two of these three cocircuits, we deduce that $M$ has $\{x_6,x_8,e\}$, $\{x_7,\alpha, e\}$, and $\{x_4,x_5,e\}$ as triangles. Thus $e$ is in at least three triangles, a contradiction to Lemma~\ref{NoThreeTriangles}. We conclude that $X=  E(M)$. Then $\{x_1,x_2,x_3,x_8,\alpha\}$ is a hyperplane of $M$, so $M$ has $\{x_4,x_5,x_6,x_7\}$ as a cocircuit. As $\{x_4,x_5\}$ is not in a triangle, we obtain the contradiction that $M \cong Q_9$. We conclude that $r(X) > 4$, so $r(X) = 5$.
    
     Assume that $r(M) = 5.$ If $\lambda(X) \leq 1$, then $M$ has at most one element not in $X$. As $r(\{x_1,x_2,\dots,x_7\}) = 4$, it follows that $M$ has a cocircuit of size less than four. Then $\lambda(X) = 2$, so $r(X) = 5$ and $r(E(M) - X) = 2$. Hence $E(M) - X$ is a line containing exactly two or exactly three points. Assume $E(M) - X = \{y_1,y_2\}$. Then each $y_i$ is in a triangle $T_i$. Because $X$ is a union of cocircuits, $\{y_1,y_2\}$ is not contained in a triangle. By Lemma~\ref{NoThreeTriangles}, none of $x_1,x_2,$ or $x_3$ is in $T_i$. By assumption, $T_i$ avoids $\{x_4,x_5\}$. Thus $T_i - y_i \subseteq \{x_6,x_7,x_8,\alpha\}$ for each $i$. By Lemma~\ref{No4PtLine}, $M$ has no line with more than three points, so, by using the cocircuits $\{x_1,x_3,x_6,x_8\}$ and $\{x_2,x_3,x_7,\alpha\}$, we may assume that $\{y_1,x_6,x_8\}$ and $\{y_2,x_7,\alpha\}$ are triangles. Then $r(\{x_1,x_2,x_3,x_4,x_6,x_8,y_1,\alpha\}) \leq 4$ so $M$ has a cocircuit of size less than four, a contradiction. We conclude that $E(M) - X$ is a triangle when $r(M) =5$. 

    Assume $r(M) > 5$. Then, as $r(X) = 5$, we must have $\lambda(X) = 2$ otherwise $M$ has a cocircuit of size less than four. Now, $M$ has a triangle $Y$ disjoint from $\cl(X)$. By Theorem \ref{GGWTutte}, $M$ has a $12$-element rank-$5$ minor $N$ having ground set $X \cup Y$ with $M|X = N|X$ and $M|Y = N|Y$ such that $\lambda_N(Y) = 2$. Then $r(N) = 5$. Now  $E(N)$ can be written as a disjoint union of triangles. Thus $N$ has no triads. By the minimality of $M$, we see that $N$ must have a $2$-element cocircuit $S^*$. As $r(X) = 5$, and $Y$ is a triangle of $N$, we see that $S^* \subseteq X$. The triangles in $X$ imply that $S^*$ is $\{x_4,x_6\}$,$\{x_5,x_7\}$ or $\{x_8,\alpha\}$. Since $r_N(Y) = 2$, each of the cocircuits $\{x_1,x_2,x_4,x_5\}, \{x_1,x_3,x_6,x_8\}$ and $\{x_2,x_3,x_7,\alpha\}$ of $M$ is also a cocircuit of $N$. The additional cocircuit $S^*$ gives a contradiction. We conclude that $r(M) = 5$. 

    Let $E(M) - X = \{e,f,g\}$, which is a triangle of $M$. The cocircuits of $M$ imply that $M$ has $\{e,f,g,x_7,\alpha\}, \{e,f,g,x_4,x_5\}$ and $\{e,f,g,x_6,x_8\}$ as planes. We may view $M$ as a restriction of a rank-$5$ ternary projective space $P$. Let $h$ be the unique point in $\cl_P(\{e,f,g\}) - \{e,f,g\}$. Since $\{x_4,x_5\}$ is not in a triangle of $M$, we deduce that $\{x_4,x_5,h\}$ is a triangle of $P$. If $\{x_4,x_5,x_7,\alpha\},\{x_4,x_5,x_6,x_8\}$, or $\{x_6,x_7,x_8,\alpha\}$ is a circuit of $M$, we obtain the contradiction that $r(X) = 4$. Thus each of $\cl_P(\{x_7,\alpha\}), \cl_P(\{x_4,x_5\})$, and $\cl_P(\{x_6,x_8\})$ meets $\{e,f,g,h\}$ in a distinct point. Hence we may assume that $\{x_6,x_8,e\}$ and $\{x_7,\alpha,f\}$ are triangles of $M$. 

    Next we construct a ternary representation for the matroid $M'$ that is obtained from $M$ by adjoining the element $h$. Then $\{x_1,x_2,x_4,x_5,x_8\}$ is a basis $B$ for this matrix and the representation is shown in Figure \ref{new3repn}.
\begin{figure}
    \centering
\[
\begin{blockarray}{ccccccccccc}
 & x_3 & x_6 & x_7 & \alpha & e  & f & g & h\\
\begin{block}{c(cccccccccc)}
  x_1     & 1 & 1 & 0 & 1 & 1   & 1   & 1   & 0\\
  x_2     & 1 & 0 & 1 & 1 & 0   & 0   & 0   & 0\\
  x_4     & 0 & 1 & 0 & 0 & 1   & 0   & u_5 & 1\\
  x_5     & 0 & 0 & 1 & 0 & 0   & u_3 & u_6 & u_1\\
  x_8     & 0 & 0 & 0 & 1 & u_2 & u_4 & u_7 & 0\\
\end{block}
\end{blockarray}
 \]
 \caption{Building a representation for $M'$.}
    \label{new3repn}
\end{figure}
To verify that this is indeed a representation, we use the fundamental circuits of $x_3,x_6$, and $x_7$ with respect to $B$ and the circuit $\{x_3,x_8,\alpha\}$. Scaling rows $2$--$5$ so that their first non-zero entries are $1$, we deduce that the columns corresponding to $x_3,x_6,x_7,$ and $\alpha$ are as indicated. Moreover, the column corresponding to $h$ is as indicated, where $u_1 \not= 0$. The circuit $\{x_6,x_8,e\}$ implies that the column $e$ is as indicated with $u_2 \not= 0$. Because $\{e,f,g,h\}$ is a $4$-point line, the second coordinates of columns $f$ and $g$ are zero. The triangle $\{x_7,\alpha,f\}$ implies that the third coordinate of column $f$ is zero while we may take the first coordinate of $f$ to be $1$. The fourth and fifth coordinates of column $f$ are $u_3$ and $u_4$ both of which are non-zero. The triangle $\{e,g,h\}$ implies that we may take the first coordinate of column $g$ to be $1$. The last three coordinates of $g$ are non-zero, $u_5,u_6,$ and $u_7$, respectively. 

 The circuit $\{x_7,\alpha,f\}$ implies that $u_3 = -1$ and $u_4 = 1$. The circuit $\{e,f,g\}$ implies that $u_5 =-1, u_6 = 1,$ and $u_7 = -u_2-1$. Finally, the circuit $\{f,g,h\}$ implies that $u_1 = 1$ and $u_2 = 1$, so Figure \ref{new3repn2} is a representation for the extension of $M$ by the element $h$. One can now check that  $M' / g \backslash e,f,h~\cong~Q_9$, a contradiction. We conclude that $\{x_3,x_8\} \not\subseteq T$.

        \begin{figure}[b]
    \centering
\[
\begin{blockarray}{ccccccccccc}
 & x_3 & x_6 & x_7 & \alpha & e  & f & g & h\\
\begin{block}{c(cccccccccc)}
  x_1     & 1 & 1 & 0 & 1 & 1 &  1 & 1  & 0\\
  x_2     & 1 & 0 & 1 & 1 & 0 &  0 & 0  & 0\\
  x_4     & 0 & 1 & 0 & 0 & 1 &  0 & -1 & 1\\
  x_5     & 0 & 0 & 1 & 0 & 0 & -1 &  1 & 1\\
  x_8     & 0 & 0 & 0 & 1 & 1 &  1 &  1 & 0\\
\end{block}
\end{blockarray}
 \]
 \caption{A matrix $A$ such that $M' \cong M[I_5|A]$.}
    \label{new3repn2}
\end{figure}

    We now know that $\{x_6,x_8\} \subseteq T$. By Lemma~\ref{no222} applied to the cocircuit $\{x_1,x_3,x_6,x_8\}$, we deduce that $x_3$ is not in a triangle apart from $\{x_1,x_2,x_3\}$, otherwise this triangle contains $x_8$, which we eliminated above. \linebreak Let $T = \{x_6,x_8,\alpha_1\}$. Then, by Lemma~\ref{no222}, the cocircuit $\{x_1,x_3,x_6,x_8\}$ implies that $M$ has no additional triangles containing $x_8$. Observe that $\alpha_1 \not\in \{x_1,x_2,\dots,x_8\}$ unless $\alpha_1 = x_7$. Now $M/x_8 \backslash \alpha_1$ is simple and so has a triad $T^*_1$ avoiding $x_8$. Then $x_6 \in T^*_1$, and $T^*_1 \cup \alpha_1$ is a cocircuit of $M$. By orthogonality and Lemma~\ref{4CoCctNoTriangle}, exactly one $x_1$ and $x_4$ is in $T^*_1$. Suppose $x_1 \in T^*_1$. Then either $\{x_1,x_3,x_6,\alpha_1\}$ is a cocircuit, or $\alpha_1 = x_7$ and $\{x_1,x_2,x_6,x_7\}$ is a cocircuit. In the first case, $M^*|\{x_1,x_3,x_6,x_8,\alpha_1\} \cong U_{3,5}$, a contradiction. In the second case, the union of the two triangles $\{x_1,x_2,x_3\}$ and $\{x_6,x_7,x_8\}$ contains two $4$-circuits, a contradiction to Lemma~\ref{C1C2_intersection_not_2}. We conclude that $x_1 \not\in T^*_1$. Thus $x_4 \in T^*_1$. Again if $\alpha_1 = x_7,$ then $T^*_1 \cup \alpha_1 = \{x_4,x_5,x_6,x_7\}$, and this cocircuit together with $\{x_1,x_2,x_4,x_5\}$ gives a contradiction to Lemma~\ref{C1C2_intersection_not_2}. Thus $\alpha_1 \not\in \{x_1,x_2,\dots,x_8\}$. Letting $$(x_7,x_5,x_2,x_3,x_1,x_4,x_6,x_8,\alpha_1) = (y_1,z_1,y_2,z_2,y_3,z_3,y_4,z_4,y_5)$$ in Lemma~\ref{new-1}, we deduce, by repeated applications of that lemma, that $M$ is isomorphic to $M(K_5)$ or $M(K_{2,2,2})$, otherwise $|E(M)|$ is infinite.
\end{proof}

\begin{lemma}\label{new4}
    Let $M$ be a minor-minimal ternary matroid in $\mathcal{M}_4$. If $M$ has $\{x_1,x_2,x_4,x_5\}$ as a cocircuit and has $\{x_1,x_2,x_3\},\{x_1,x_4,x_7\},$ $\{x_4,x_5,x_6\}$, and $\{x_2,x_5,x_8\}$ as triangles, then $x_1,x_2,\dots,x_7$, and  $x_8$ are distinct, and $M$ has a $4$-cocircuit $\{x_2,x_3,x_8,\sigma\}$ or $\{x_4,x_6,x_7,\tau\}$ where neither $\sigma$ nor $\tau$ is in $\{x_1,x_2,\dots,x_8\}$.
\end{lemma}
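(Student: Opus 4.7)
The proof divides into two parts: distinctness of the eight elements, and existence of the required $4$-cocircuit. Throughout I will use the standing assumption of Section~\ref{Sec:Ternary} that $M$ is not isomorphic to any of the seven matroids excluded there, so that the earlier lemmas all apply. In particular, by Lemma~\ref{MainTheorem:2conn}, $M$ is $3$-connected; by Lemma~\ref{No4PtLine}, $M$ has no $4$-point line; by Lemma~\ref{4CocctInd}, every $4$-cocircuit of $M$ has rank $4$; and by Lemma~\ref{4CoCctNoTriangle}, no $4$-cocircuit contains a triangle.

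For distinctness, the plan is to run through each putative coincidence in $\{x_1,\dots,x_8\}$ and observe that it forces either two of the four given triangles to share a pair of elements (producing a $4$-point line, forbidden by Lemma~\ref{No4PtLine}) or the rank of $\{x_1,x_2,x_4,x_5\}$ to drop below four. For instance, $x_3=x_6$ makes $\{x_1,x_2,x_3\}$ and $\{x_4,x_5,x_6\}$ share $x_3$, forcing $r(\{x_1,x_2,x_3,x_4,x_5\})\leq 3$; $x_3=x_7$ makes $\{x_1,x_2,x_3\}$ and $\{x_1,x_4,x_7\}$ share $\{x_1,x_3\}$, producing a $4$-point line; $x_7=x_8$ collapses the rank via $\{x_1,x_4,x_7\}$ and $\{x_2,x_5,x_8\}$; and all other collisions are handled analogously.

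For the cocircuit, I would first apply Lemma~\ref{minCond} to obtain a $4$-cocircuit $D^*$ through $x_3$. Orthogonality with $\{x_1,x_2,x_3\}$ forces $D^*$ to contain exactly one of $x_1,x_2$ (both is excluded by Lemma~\ref{4CoCctNoTriangle}). A case analysis using orthogonality with the other three triangles, together with Lemma~\ref{C1C2_intersection_not_2} applied to the disjoint pair $\{x_1,x_2,x_3\}$ and $\{x_4,x_5,x_6\}$, eliminates every candidate except
\[
D^*=\{x_2,x_3,x_8,\sigma\}\quad\text{or}\quad D^*=\{x_1,x_3,x_7,\sigma'\},
\]
with the fourth element outside $\{x_1,\dots,x_8\}$. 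The first form is the desired conclusion. Otherwise, the hypothesis's symmetry under $x_3\leftrightarrow x_6,\, x_7\leftrightarrow x_8,\, x_1\leftrightarrow x_5,\, x_2\leftrightarrow x_4$ lets us rerun the analysis on a $4$-cocircuit through $x_6$, yielding either $\{x_4,x_6,x_7,\tau\}$ (done) or $\{x_5,x_6,x_8,y\}$.

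The main obstacle is the residual case in which both $\{x_1,x_3,x_7,\sigma'\}$ and $\{x_5,x_6,x_8,y\}$ are cocircuits but neither of the two target cocircuits exists. My plan here is to apply cocircuit elimination to $\{x_1,x_2,x_4,x_5\}$ and $\{x_1,x_3,x_7,\sigma'\}$ at $x_1$: orthogonality with the four triangles rules out every $4$-element subset of $\{x_2,x_3,x_4,x_5,x_7,\sigma'\}$ as a cocircuit, so the elimination forces the $5$-cocircuit $\{x_2,x_3,x_4,x_5,x_7\}$. A symmetric elimination with $\{x_5,x_6,x_8,y\}$ produces the $5$-cocircuit $\{x_1,x_2,x_4,x_6,x_8\}$. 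Combining these larger cocircuits with the triangle structure through further elimination, and invoking Lemma~\ref{ring} on the cyclic configuration of disjoint triangles and $4$-cocircuits thereby generated, should yield the required contradiction; making this final step precise is the technical heart of the proof.
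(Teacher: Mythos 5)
Your distinctness argument is fine and matches the paper. The problem is the second half. Your reduction of a $4$-cocircuit through $x_3$ to the two forms $\{x_2,x_3,x_8,\sigma\}$ or $\{x_1,x_3,x_7,\sigma'\}$ (and dually through $x_6$) is correct, but the residual case you identify — where the cocircuits through $x_3$ and $x_6$ are $\{x_1,x_3,x_7,\sigma'\}$ and $\{x_5,x_6,x_8,y\}$ and neither target cocircuit is present — is exactly the hard part, and you do not resolve it. It cannot be waved away: cocircuits of precisely these shapes genuinely occur alongside the given configuration (they reappear as $\{x_1,x_3,x_7,y_3\}$ and $\{x_5,x_6,x_8,y_8\}$ in the proof of Lemma~\ref{last}), so no amount of orthogonality bookkeeping will exclude them. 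Moreover, the tool you propose for closing the gap, Lemma~\ref{ring}, is inapplicable here: its hypotheses require pairwise disjoint triangles and that no element of $M$ lies in more than one triangle, whereas in your configuration each of $x_1,x_2,x_4,x_5$ lies in two of the four given triangles, which pairwise intersect. A smaller issue: your elimination step does not force the $5$-cocircuit $\{x_2,x_3,x_4,x_5,x_7\}$; the eliminated cocircuit could equally be $\{x_2,x_3,x_4,x_5,x_7,\sigma'\}$, and in any case it is unclear what such a large cocircuit buys you.

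The paper avoids the residual case entirely by exploiting minor-minimality rather than Lemma~\ref{minCond}: it considers $M/x_5\backslash x_2,x_4$, notes it is simple, uses Lemma~\ref{C1C2_intersection_not_2} to show it is cosimple, and concludes that it has a triad $T^*$, so that $T^*\cup x_2$, $T^*\cup x_4$, or $T^*\cup\{x_2,x_4\}$ is a cocircuit of $M$ avoiding $x_5$. The third possibility is killed by cocircuit elimination against $\{x_1,x_2,x_4,x_5\}$ plus orthogonality, and the first two are forced by orthogonality to be exactly $\{x_2,x_3,x_8,\sigma\}$ and $\{x_4,x_6,x_7,\tau\}$ with the new element outside $\{x_1,\dots,x_8\}$. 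The point is that this construction controls both which elements the cocircuit avoids ($x_5$) and how it meets $\{x_2,x_4\}$, which is the control your ``cocircuit through $x_3$'' approach lacks; to salvage your route you would need a genuinely new argument for the residual case, and as written the proof is incomplete.
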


\begin{proof}
    By Lemma~\ref{4CocctInd}, $x_3 \not=x_6$ otherwise $r(\{x_1,x_2,x_4,x_5\}) = 3$. By symmetry, $x_7 \not= x_8$. Similarly, $x_3 \not= x_8$ and, by Lemma~\ref{No4PtLine}, $x_3 \not= x_7$. Thus $x_1,x_2,\dots,x_7$, and $x_8$ are distinct. 

    Consider $M/ x_5\backslash x_2,x_4$. This matroid is simple. By Lemma~\ref{C1C2_intersection_not_2}, $M$ does not have a $4$-cocircuit containing $\{x_2,x_4\}$ and avoiding $x_5$. Thus $M/ x_5 \backslash x_2,x_4$ is cosimple.  By the minimality of $M$, the matroid $M / x_5 \backslash x_2,x_4$ has a triad $T^*$. Thus one of $T^* \cup x_2, T^* \cup x_4,$ or $T^* \cup \{x_2,x_4\}$ is a cocircuit of $M$ where $x_5 \not\in T^*$.

    Assume that $T^* \cup \{x_2,x_4\}$ is a cocircuit of $M$. Then, by orthogonality. $\{x_6,x_8\} \subseteq T^*$ and also $x_1 \in T^*$. Thus $\{x_1,x_2,x_4,x_6,x_8\}$ is a cocircuit of $M$. Eliminating $x_1$ from the union of $\{x_1,x_2,x_4,x_5\}$ and $\{x_1,x_2,x_4,x_6,x_8\}$, we deduce that $M$ has a cocircuit $D^*$ that is contained in $\{x_2,x_4,x_5,x_6,x_8\}$. By orthogonality, $x_4 \not\in D^*$ and $x_2 \not\in D^*$, so $|D^*| \leq 3$, a contradiction. Hence $T^* \cup \{x_2,x_4\}$ is not a cocircuit of $M$. 

    Next suppose that $T^* \cup x_2$ is a cocircuit of $M$. As $x_5 \not\in T^*$, we deduce that $x_8 \in T^*$. Also $x_1$ or $x_3$ is in $T^*$. If $x_1 \in T^*$, we get a contradiction to Lemma~\ref{4CocctInd}. Thus $x_3 \in T^*$, so $T^* \cup x_2 = \{x_2,x_3,x_8,\sigma\}$ for some element $\sigma$ that, by orthogonality, is not in $\{x_1,x_2,\dots,x_8\}$. By symmetry, if $T^* \cup x_4$ is a cocircuit of $M$, then $T^* \cup x_4 = \{x_4,x_6,x_7,\tau\}$ for some element $\tau$ not in $\{x_1,x_2,\dots,x_8\}$.
\end{proof}

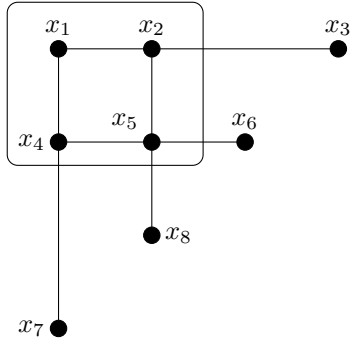
\begin{figure}
    \centering
    
  \begin{tikzpicture}[tight/.style={inner sep=1pt}, loose/.style={inner sep=.7em}, scale=0.62]

            	\coordinate (x7) at (0,0);
                \draw[fill=black] (x7) circle (5pt);
                \node at (x7)[label=left:\small $x_7$, tight]{};
                
				\coordinate (x4) at (0,4);
	            \draw[fill=black] (x4) circle (5pt);
                \node at (x4)[label=left:\small $x_{4}$, tight]{};
                
                \coordinate (x1) at (0,6);
                \draw[fill=black] (x1) circle (5pt);
                \node at (x1)[label=above :\small $x_{1}$, tight]{};                
				\coordinate (x2) at (2,6);
                \draw[fill=black] (x2) circle (5pt);
                \node at (x2)[label=above:\small $x_{2}$, tight]{};
                
				\coordinate (x3) at (6,6);
                \draw[fill=black] (x3) circle (5pt);
                \node at (x3)[label=above :\small $x_3$, tight]{};
            
                \coordinate (x5) at (2,4);
                \draw[fill=black] (x5) circle (5pt);
                \node at (x5)[label=above left:\small $x_5$, tight]{};            
                \coordinate (x6) at (4,4);
                \draw[fill=black] (x6) circle (5pt);
                \node at (x6)[label=above :\small $x_6$, tight]{};
                
                \coordinate (x8) at (2,2);
                \draw[fill=black] (x8) circle (5pt);
                \node at (x8)[label= right:\small $x_8$, tight]{};

                \draw (x1) -- (x7);
                \draw (x1) -- (x3);
                \draw (x4) -- (x6);
                \draw (x2) -- (x8);
  \draw[rounded corners] (-1.1, 7) rectangle (3.1,3.5);
			\end{tikzpicture}

    \caption{The triangles of $M$ at the beginning of the proof of Lemma~\ref{last}.}
    \label{fig:1245}
\end{figure}

\begin{lemma}\label{last}
    Let $M$ be a minor-minimal ternary matroid in $\mathcal{M}_4$. Assume that $M$ has $\{x_1,x_2,x_4,x_5\}$ as a cocircuit  and has $\{x_1,x_2,x_3\}$ and $ \{x_1,x_4,x_7\},$  as triangles. Then $M$ is isomorphic to $P_7,M^*(K_{3,3}), Q_9, \linebreak M(K_5),M(K_{2,2,2}),$ or $H_{12}$.
\end{lemma}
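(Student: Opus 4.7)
The plan is to assume $M$ is not isomorphic to any of the six listed matroids and derive a contradiction. By Lemma~\ref{MainTheorem:2conn}, $M$ is $3$-connected (since $M\not\cong H_{12}$), and by Lemma~\ref{NoThreeTriangles} no element of $M$ lies in more than two triangles; in particular $x_1$ lies in exactly the two given triangles $\{x_1,x_2,x_3\}$ and $\{x_1,x_4,x_7\}$. By Lemma~\ref{4CocctInd} the cocircuit $\{x_1,x_2,x_4,x_5\}$ is independent, by Lemma~\ref{No4PtLine} $M$ has no $4$-point line, and by Lemma~\ref{4CoCctNoTriangle} no $4$-cocircuit contains a triangle.

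By Lemma~\ref{minCond}, $x_5$ lies in a triangle $T$. Orthogonality with $\{x_1,x_2,x_4,x_5\}$ forces $T$ to meet $\{x_1,x_2,x_4\}$, and the two-triangle bound on $x_1$ restricts this intersection to $\{x_2,x_4\}$. Thus at least one of $\{x_2,x_5,x_8\}$ or $\{x_4,x_5,x_6\}$ is a triangle, with the third element fresh. If exactly one of these is a triangle, then Lemma~\ref{new3} applies: directly in one sub-case, and in the other via the involution $(x_2,x_4)(x_3,x_7)$, which preserves the cocircuit and interchanges the two known triangles of the hypothesis. The conclusion of Lemma~\ref{new3} then contradicts the standing assumption.

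I am left with the main case in which both $\{x_4,x_5,x_6\}$ and $\{x_2,x_5,x_8\}$ are triangles, as drawn in Figure~\ref{fig:1245}. Here Lemma~\ref{new4} yields the distinctness of $x_1,\dots,x_8$ and produces a new $4$-cocircuit $\{x_2,x_3,x_8,\sigma\}$ or $\{x_4,x_6,x_7,\tau\}$ whose last element lies outside $\{x_1,\dots,x_8\}$. The involution $(x_2,x_4)(x_3,x_7)(x_6,x_8)$ preserves the main-case hypothesis and swaps these two possibilities, so I may assume the former. By Lemma~\ref{minCond}, $\sigma$ lies in a triangle $T_\sigma$; orthogonality of $T_\sigma$ with $\{x_2,x_3,x_8,\sigma\}$ together with Lemma~\ref{NoThreeTriangles} (forbidding a third triangle at $x_2$), Lemma~\ref{4CoCctNoTriangle} (forbidding $T_\sigma\subseteq\{x_2,x_3,x_8,\sigma\}$), and Lemma~\ref{No4PtLine} (forbidding $T_\sigma$ from creating a $4$-point line with a known triangle) force $T_\sigma$ to have the form $\{\sigma,x_3,\alpha\}$ or $\{\sigma,x_8,\beta\}$ with the third element outside $\{x_1,\dots,x_8\}$.

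The endgame is to weave $T_\sigma$ together with the ring of four known triangles into a chain $y_1,z_1,y_2,z_2,\dots,y_k,z_k,y_{k+1}$ of the form required by Lemma~\ref{new-1}, using the pendant elements $x_3,x_6,x_7,x_8$ and the new element of $T_\sigma$ as the single-triangle entries $z_i$, and using $x_1,x_2,x_4,x_5$ and $\sigma$ as the joints $y_i$. The two known cocircuits supply only part of what Lemma~\ref{new-1} demands, so the remaining cocircuits along the chain must be generated one at a time by the technique used repeatedly in this section: appealing to the minimality of $M$ applied to $\si(M/z_i\backslash y_{i+1})$, extracting a triad via Bixby's Lemma, and then using orthogonality together with Lemma~\ref{C1C2_intersection_not_2} to pin that triad down. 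Once a chain of length at least four is in place, Lemma~\ref{new-1} either keeps extending it with genuinely new elements, forcing $|E(M)|$ to be infinite, or the chain closes back on itself, in which case the cyclic branch inside the proof of Lemma~\ref{new-1} forces $M\cong M(K_5)$ or $M\cong M(K_{2,2,2})$ — both on the forbidden list. The hard part is precisely this chain-building: producing each successive cocircuit forces a large case analysis in which the alternative triads that could close the chain drive $M$ into $Q_9$ or $M^*(K_{3,3})$. Each such alternative is itself a contradiction because those matroids are also forbidden, but identifying and dispatching all of them uniformly is the bulk of the work.
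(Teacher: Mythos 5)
Your opening reductions match the paper exactly: $3$-connectedness via Lemma~\ref{MainTheorem:2conn}, the use of Lemmas~\ref{NoThreeTriangles}, \ref{4CocctInd}, \ref{No4PtLine}, \ref{4CoCctNoTriangle}, the placement of $x_5$'s triangle, the dispatch of the ``exactly one of $\{x_2,x_5\},\{x_4,x_5\}$'' cases through Lemma~\ref{new3}, and the appeal to Lemma~\ref{new4} plus the symmetry $(x_2,x_4)(x_3,x_7)(x_6,x_8)$ to fix the cocircuit $\{x_2,x_3,x_8,\sigma\}$. But from that point on you have not given a proof. The paper's argument only begins there: it first applies Lemma~\ref{new3} (not just orthogonality) to the cocircuit $\{x_2,x_3,x_8,y_2\}$ to force \emph{both} triangles $\{x_3,y_2,y_3\}$ and $\{x_8,y_2,y_8\}$, then rules out the coincidences $x_6=y_3$, $x_7=y_8$, $x_6=y_8$, $x_7=y_3$ (using Lemma~\ref{lem33} in one branch), proves the distinctness statement \ref{diff}, classifies the possible $4$-cocircuits through $x_6,x_7,y_3,y_8$, and works through the sublemmas \ref{both} and \ref{notboth} (including the $\beta_6=\beta_3$ subcase) before the final case analysis identifies $M$ as $M(K_{2,2,2})$ via Lemma~\ref{tiedown} or as $M^*(K_{3,3})$ via Lemma~\ref{lem33}. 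Your own text concedes this: ``identifying and dispatching all of them uniformly is the bulk of the work.'' A plan that defers the bulk of the work is a gap, not a proof.

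Moreover, the specific endgame you propose --- building a chain and invoking Lemma~\ref{new-1} --- is misdirected for this configuration. Lemma~\ref{new-1} needs an \emph{open} path of triangles $\{y_i,z_i,y_{i+1}\}$ with linking cocircuits of the pattern $\{z_j,y_{j+1},y_{j+2},z_{j+2}\}$ (pendant--joint--joint--pendant) and with every $z_i$ in a unique triangle; that is exactly the shape produced inside the proof of Lemma~\ref{new3}, which is where the paper uses it. In the situation of Lemma~\ref{last} the four triangles form a closed ring $x_1x_2x_5x_4$ with pendants $x_3,x_8,x_6,x_7$, and the available cocircuits are $\{x_1,x_2,x_4,x_5\}$ (all four joints) and $\{x_2,x_3,x_8,\sigma\}$ (joint--pendant--pendant--new), neither of which fits the pattern Lemma~\ref{new-1} requires; you give no argument that the missing cocircuits of the right shape can be manufactured, and the paper's proof suggests they generally cannot, since it closes this case by exhibiting $M(K_{2,2,2})$ or $M^*(K_{3,3})$ directly rather than by any chain-extension argument. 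So both the execution and the proposed route to the finish are missing.
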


\begin{proof}

As $x_5$ is in a triangle of $M$, by Lemmas \ref{NoThreeTriangles} and \ref{new3}, we may assume that $\{x_4,x_5,x_6\}$ and $\{x_2,x_5,x_8\}$ are triangles of $M$. Moreover, by Lemma~\ref{new4}, $x_1,x_2,\dots,x_7$, and $x_8$ are distinct. We shall use the diagram in Figure \ref{fig:1245} to expose the symmetries that arise in the argument. The ring around the set $\{x_1,x_2,x_4,x_5\}$ is to indicate that this set is a cocircuit of $M$. By Lemma~\ref{new4}, $M$ has $\{x_2,x_3,x_8,y_2\}$ or $\{x_4,x_6,x_7,y_4\}$ as a cocircuit for some elements $y_2$ and $y_4$ not in $\{x_1,x_2,\dots,x_8\}$. Redrawing Figure \ref{fig:1245} as Figure \ref{fig:1245-2} corresponding to the two possibilities above, we see that these two cases are symmetric. 
We may assume that $M$ has $\{x_2,x_3,x_8,y_2\}$ as a cocircuit. By Lemma~\ref{new3}, $M$ has triangles $\{x_3,y_2,y_3\}$ and $\{x_8,y_2,y_8\}$.  As $M$ has no $4$-point lines, $y_3 \not=y_8$. Moreover, by orthogonality, $\{y_3,y_8\} \cap \{x_1,x_2,x_3,x_4,x_5,x_8,y_2\} = \emptyset.$

\begin{figure}[b]
    \centering
    \begin{subfigure}{.25\textwidth}
    
  \begin{tikzpicture}[tight/.style={inner sep=1pt}, loose/.style={inner sep=.7em}, scale=0.60]

            	\coordinate (x7) at (0,0);
                \draw[fill=black] (x7) circle (5pt);
                \node at (x7)[label=left:\small $x_7$, tight]{};
                
				\coordinate (x4) at (0,4);
	            \draw[fill=black] (x4) circle (5pt);
                \node at (x4)[label=left:\small $x_{1}$, tight]{};
                
                \coordinate (x1) at (0,6);
                \draw[fill=black] (x1) circle (5pt);
                \node at (x1)[label=above :\small $x_{4}$, tight]{};   
                
				\coordinate (x2) at (2,6);
                \draw[fill=black] (x2) circle (5pt);
                \node at (x2)[label=above:\small $x_{5}$, tight]{};
                
				\coordinate (x3) at (6,6);
                \draw[fill=black] (x3) circle (5pt);
                \node at (x3)[label=above :\small $x_6$, tight]{};
            
                \coordinate (x5) at (2,4);
                \draw[fill=black] (x5) circle (5pt);
                \node at (x5)[label=above left:\small $x_2$, tight]{};     
                
                \coordinate (x6) at (4,4);
                \draw[fill=black] (x6) circle (5pt);
                \node at (x6)[label=above :\small $x_3$, tight]{};

                \coordinate (y2) at (4,2);
                \draw[fill=black] (y2) circle (5pt);
                \node at (y2)[label= above:\small $y_2$, tight]{};             
                
                \coordinate (x8) at (2,2);
                \draw[fill=black] (x8) circle (5pt);
                \node at (x8)[label= right:\small $x_8$, tight]{};

                \draw (x1) -- (x7);
                \draw (x1) -- (x3);
                \draw (x4) -- (x6);
                \draw (x2) -- (x8);
  \draw[rounded corners] (-1.1, 7) rectangle (3.1,3.5);
  \draw[rounded corners] (.9,5) rectangle (5.1,1.3);
			\end{tikzpicture}
 
\end{subfigure}%
\hspace{1cm}
\begin{subfigure}{.5\textwidth}
  \centering
  \begin{tikzpicture}[tight/.style={inner sep=1pt}, loose/.style={inner sep=.7em}, scale=0.60]

            	\coordinate (x7) at (0,0);
                \draw[fill=black] (x7) circle (5pt);
                \node at (x7)[label=left:\small $x_8$, tight]{};
                
				\coordinate (x4) at (0,4);
	            \draw[fill=black] (x4) circle (5pt);
                \node at (x4)[label=left:\small $x_{5}$, tight]{};
                
                \coordinate (x1) at (0,6);
                \draw[fill=black] (x1) circle (5pt);
                \node at (x1)[label=above :\small $x_{2}$, tight]{};   
                
				\coordinate (x2) at (2,6);
                \draw[fill=black] (x2) circle (5pt);
                \node at (x2)[label=above:\small $x_{1}$, tight]{};
                
				\coordinate (x3) at (6,6);
                \draw[fill=black] (x3) circle (5pt);
                \node at (x3)[label=above :\small $x_3$, tight]{};
            
                \coordinate (x5) at (2,4);
                \draw[fill=black] (x5) circle (5pt);
                \node at (x5)[label=above left:\small $x_4$, tight]{};  
                
                \coordinate (x6) at (4,4);
                \draw[fill=black] (x6) circle (5pt);
                \node at (x6)[label=above :\small $x_6$, tight]{};
                
                \coordinate (x8) at (2,2);
                \draw[fill=black] (x8) circle (5pt);
                \node at (x8)[label= right:\small $x_7$, tight]{};

                \coordinate (y4) at (4,2);
                \draw[fill=black] (y4) circle (5pt);
                \node at (y4)[label= above:\small $y_4$, tight]{}; 

                \draw (x1) -- (x7);
                \draw (x1) -- (x3);
                \draw (x4) -- (x6);
                \draw (x2) -- (x8);
  \draw[rounded corners] (-1.1, 7) rectangle (3.1,3.5);
  \draw[rounded corners] (.9,5) rectangle (5.1,1.3);
			\end{tikzpicture}
\end{subfigure}
    \caption{The ringed sets correspond to cocircuits.}
    \label{fig:1245-2}
\end{figure}
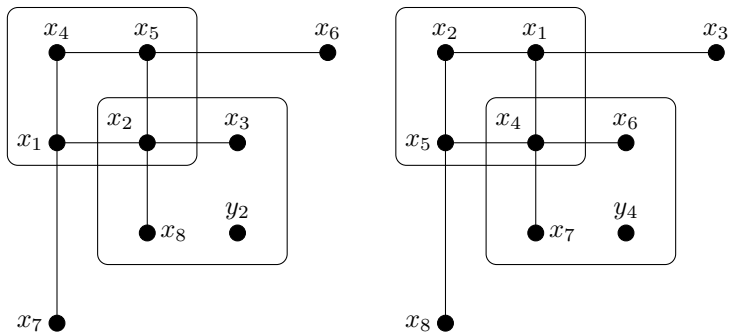

Suppose $x_6 = y_3$. There is a $4$-cocircuit $C_6^*$ containing $x_6$. Then, by Lemma \ref{4CoCctNoTriangle}, $C_6^*$ contains exactly one of $x_4$ and $x_5$ and exactly one of $x_3$ and $y_2$. By Lemma~\ref{C1C2_intersection_not_2}, $x_3 \not\in C^*_6$ and $x_5 \not\in C^*_6$. Thus $\{x_4,x_6,y_2\} \subseteq C^*_6$ so, by orthogonality, $C^*_6$ must contain $x_7$ and $y_8$, so $x_7 = y_8$. It follows by Lemma~\ref{lem33} that $M$ has $M^*(K_{3,3})$ as a minor, so we deduce that $x_6 \not= y_3$. By symmetry, $x_7\not=y_8$.

Next suppose that $x_6 = y_8$. Let $D^*_6$ be a $4$-cocircuit containing $x_6$. Then, by Lemma~\ref{4CoCctNoTriangle}, $D^*_6$ contains exactly one of $x_4$ and $x_5$ and contains exactly one of $x_8$ and $y_2$. Now $D^*_6$ cannot contain $\{x_4,x_6,x_8\}$ otherwise, by orthogonality, $|D^*_6| \geq 5$, a contradiction. Moreover, by Lemma~\ref{no222}, $\{x_5,x_6,x_8\} \not\subseteq D^*_6$. We deduce that $\{x_6,y_2\} \subseteq D^*_6$.

Suppose $\{x_4,x_6,y_2\} \subseteq D^*_6$. Then, by Lemma \ref{4CoCctNoTriangle}, $D^*_6$ contains $x_1$ or $x_7$ and contains $x_3$ or $y_3$. As $|D^*_6| = 4$, it follows that $y_3 = x_7$ and $D^*_6 = \{x_4,x_6,x_7,y_2\}$. The cocircuits $\{x_1,x_2,x_4,x_5\}, \{x_2,x_3,x_8,y_2\}$, and $\{x_4,x_6,x_7,y_2\}$ imply that $\lambda(\{x_1,x_2,\dots,x_8,y_2\}) \leq 4 + (9-3) - 9 = 1$, so $|E(M) - \{x_1,x_2,\dots,x_8,y_2\}| \leq 1$. As $r(\{x_2,x_4,x_5,x_6,x_8,y_2\}) = 3,$ to avoid $M$ having a cocircuit of size less than four, we must have that $E(M) - \{x_1,x_2,\dots,x_8,y_2\}$ contains a single element, say $\gamma$. The complement of the hyperplane $\{x_2,x_4,x_5,x_6,x_8,y_2\}$ is $\{x_1,x_3,x_7,\gamma\}$. As $\{x_1,x_2,x_3\}$, $\{x_3,x_7,y_2\}$, and $\{x_1,x_4,x_7\}$ are triangles, we get a contradiction to \linebreak Lemma~\ref{no222}. We conclude that $\{x_4,x_6,y_2\} \not\subseteq D^*_6$.

We now know that $\{x_5,x_6,y_2\} \subseteq D^*_6$. Then $x_2$ or $x_8$ is in $D^*_6$, and $x_3$ or $y_3$ is in $D^*_6$. Then, we obtain the contradiction that $|D^*_6| \geq5$ unless $x_3$ or $y_3$ is in $\{x_2,x_5,x_6,x_8,y_2\}$. Consider the exceptional case. As $x_1,x_2,\dots, x_8$, and $y_2$ are distinct, we must have that $y_3 \in \{x_2,x_5,x_6,x_8,y_2\}$. But we showed that $y_3 \not= x_6$. Also $y_3 \not= y_2$. By orthogonality between the triangle $\{x_3,y_2,y_3\}$ and the cocircuit $\{x_1,x_2,x_4,x_5\}$, we see that $y_3 \not\in \{x_2,x_5\}$. Finally, $y_3 \not= x_8$ or else the cocircuit $\{x_2,x_3,x_8,y_2\}$ contains a triangle, a contradiction to Lemma \ref{4CoCctNoTriangle}.  We conclude that $x_6 \not= y_8$. By symmetry, $x_7 \not = y_3$. Thus 
\begin{figure}
    \centering
    \begin{tikzpicture}[tight/.style={inner sep=1pt}, loose/.style={inner sep=.7em}, scale=0.60]

            	\coordinate (x7) at (0,0);
                \draw[fill=black] (x7) circle (5pt);
                \node at (x7)[label=left:\small $x_7$, tight]{};
                
				\coordinate (x4) at (0,4);
	            \draw[fill=black] (x4) circle (5pt);
                \node at (x4)[label=left:\small $x_{1}$, tight]{};
                
                \coordinate (x1) at (0,6);
                \draw[fill=black] (x1) circle (5pt);
                \node at (x1)[label=above :\small $x_{4}$, tight]{};   
                
				\coordinate (x2) at (2,6);
                \draw[fill=black] (x2) circle (5pt);
                \node at (x2)[label=above:\small $x_{5}$, tight]{};
                
				\coordinate (x3) at (6,6);
                \draw[fill=black] (x3) circle (5pt);
                \node at (x3)[label=above :\small $x_6$, tight]{};
            
                \coordinate (x5) at (2,4);
                \draw[fill=black] (x5) circle (5pt);
                \node at (x5)[label=above left:\small $x_2$, tight]{};            
                \coordinate (x6) at (4,4);
                \draw[fill=black] (x6) circle (5pt);
                \node at (x6)[label=above :\small $x_3$, tight]{};

                \coordinate (y2) at (4,2);
                \draw[fill=black] (y2) circle (5pt);
                \node at (y2)[label= above left:\small $y_2$, tight]{};             
                
                \coordinate (x8) at (2,2);
                \draw[fill=black] (x8) circle (5pt);
                \node at (x8)[label= left:\small $x_8$, tight]{};

                \coordinate (y3) at (4,0);
                \draw[fill=black] (y3) circle (5pt);
                \node at (y3)[label= left:\small $y_3$, tight]{};

                \coordinate (y8) at (6,2);
                \draw[fill=black] (y8) circle (5pt);
                \node at (y8)[label= above:\small $y_8$, tight]{};

                \draw (x1) -- (x7);
                \draw (x1) -- (x3);
                \draw (x4) -- (x6);
                \draw (x2) -- (x8);
                \draw (x6) -- (y3);
                \draw (x8) -- (y8);
  \draw[rounded corners] (-1.1, 7) rectangle (3.1,3.5);
  \draw[rounded corners] (.9,5) rectangle (5.1,1.3);
			\end{tikzpicture}
    \caption{The ringed sets correspond to cocircuits.}
    \label{fig:1245-3}
\end{figure}
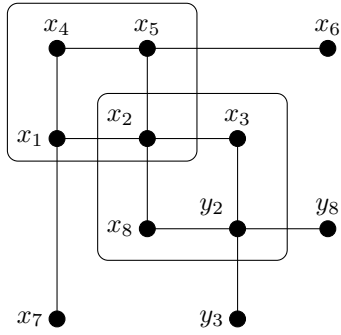

\begin{sublemma}\label{diff}
    $x_1,x_2,\dots,x_8,y_2,y_3,$ and $y_8$ are distinct.
\end{sublemma}

Let $Z = \{x_1,x_2,\dots,x_8,y_2,y_3,y_8\}$. Now $\{x_5,x_6\}$ or $\{x_4,x_6\}$ is in a $4$-cocircuit $S^*_6$ of $M$. By Lemma~\ref{C1C2_intersection_not_2}, $S^*_6$ avoids $\{x_1,x_2,x_3\}$. Thus $S^*_6$ is $\{x_5,x_6,x_8,y_8\}$ or $\{x_4,x_6,x_7,\beta_6\}$ for some element $\beta_6$ not in $Z$. By symmetry, $M$ has a $4$-cocircuit $S^*_7$ containing $x_7$ and $x_1$ or $x_4$. Then $S^*_7$ is $\{x_1,x_3,x_7,y_3\}$ or $\{x_4,x_6,x_7,\beta_7\}$ for some element $\beta_7$ not in $Z$. By symmetry, $M$ has $4$-cocircuits $S^*_3$ and $S^*_8$ where $S^*_3$ contains $\{y_2,y_3\}$ or $\{x_3,y_3\}$, while $S^*_8$ contains $\{y_2,y_8\}$ or $\{x_8,y_8\}$. Then $S_3^*$ is $\{y_2,y_3,y_8,\beta_3\}$ or $\{x_1,x_3,x_7,y_3\}$, and $S^*_8$ is $\{y_2,y_3,y_8,\beta_8\}$ or $\{x_5,x_6,x_8,y_8\}$ where neither $\beta_3$ nor $\beta_8$ is in $Z$.

\begin{sublemma}\label{both}
    If $M$ has both $\{x_5,x_6,x_8,y_8\}$ and $\{x_1,x_3,x_7,y_3\}$ as cocircuits, then $M \cong M(K_{2,2,2})$.
\end{sublemma}

Observe that as $M$ has $\{x_5,x_6,x_8,y_8\}$ and $\{x_1,x_3,x_7,y_3\}$ as cocircuits, $\lambda(Z) \leq 5 + (11-4) - 11 = 1$, so $|E(M) - Z| \leq 1$. If $r(M) = 4$, then deleting the cocircuits $\{x_1,x_2,x_4,x_5\}$ and $\{x_2,x_3,x_8,y_2\}$ from $E(M)$ gives a rank-$2$ flat that contains $\{x_6,x_7,y_3,y_8\}$, a contradiction. Therefore $r(M)\geq~5$. As $r(\{x_1,x_2,x_3,x_4,x_5,x_6,x_7,x_8\})= 4$, to avoid $M$ having a triad, we must have that $|E(M) - Z| = 1$ and $r(M) = 5$. Let $E(M) - Z= \{\delta\}$. Deleting the union of the three cocircuits $\{x_1,x_2,x_4,x_5\}, \{x_2,x_3,x_8,y_2\}$, and $\{x_5,x_6,x_8,y_8\}$ from $E(M)$ leaves $\{x_7,y_3,\delta\}$, so this set is a triangle of $M$. By symmetry, $\{x_6,y_8,\delta\}$ is a triangle of $M$. It follows by Lemma~\ref{tiedown} that $M \cong M(K_{2,2,2})$.

\begin{sublemma}\label{notboth}
      $\{x_5,x_6,x_8,y_8\}$ or $\{x_1,x_3,x_7,y_3\}$ is a cocircuit, or $M \cong M(K_{2,2,2})$. 
\end{sublemma}

 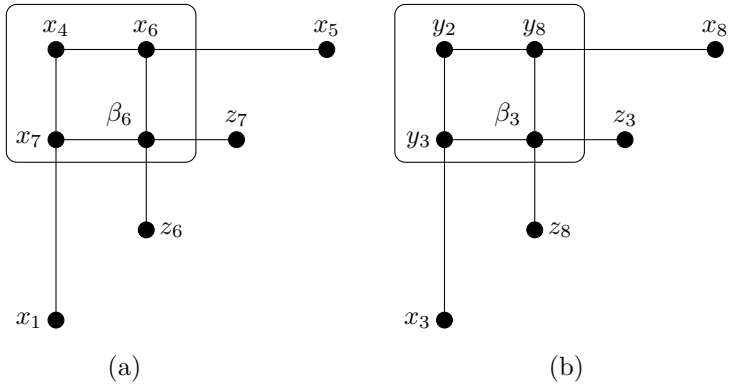
\begin{figure}[b]
      \centering
          \begin{subfigure}{.25\textwidth}
    
  \begin{tikzpicture}[tight/.style={inner sep=1pt}, loose/.style={inner sep=.7em}, scale=0.60]

            	\coordinate (x7) at (0,0);
                \draw[fill=black] (x7) circle (5pt);
                \node at (x7)[label=left:\small $x_1$, tight]{};
                
				\coordinate (x4) at (0,4);
	            \draw[fill=black] (x4) circle (5pt);
                \node at (x4)[label=left:\small $x_{7}$, tight]{};
                
                \coordinate (x1) at (0,6);
                \draw[fill=black] (x1) circle (5pt);
                \node at (x1)[label=above :\small $x_{4}$, tight]{};     
                
				\coordinate (x2) at (2,6);
                \draw[fill=black] (x2) circle (5pt);
                \node at (x2)[label=above:\small $x_{6}$, tight]{};
                
				\coordinate (x3) at (6,6);
                \draw[fill=black] (x3) circle (5pt);
                \node at (x3)[label=above :\small $x_5$, tight]{};
            
                \coordinate (x5) at (2,4);
                \draw[fill=black] (x5) circle (5pt);
                \node at (x5)[label=above left:\small $\beta_6$, tight]{};     
                
                \coordinate (x6) at (4,4);
                \draw[fill=black] (x6) circle (5pt);
                \node at (x6)[label=above :\small $z_7$, tight]{};         
                
                \coordinate (x8) at (2,2);
                \draw[fill=black] (x8) circle (5pt);
                \node at (x8)[label= right:\small $z_6$, tight]{};

                \draw (x1) -- (x7);
                \draw (x1) -- (x3);
                \draw (x4) -- (x6);
                \draw (x2) -- (x8);
  \draw[rounded corners] (-1.1, 7) rectangle (3.1,3.5);

			\end{tikzpicture}
\caption{}
 \label{fig:63:a}
\end{subfigure}%
\hspace{1cm}
\begin{subfigure}{.5\textwidth}
  \centering
  \begin{tikzpicture}[tight/.style={inner sep=1pt}, loose/.style={inner sep=.7em}, scale=0.60]

            	\coordinate (x7) at (0,0);
                \draw[fill=black] (x7) circle (5pt);
                \node at (x7)[label=left:\small $x_3$, tight]{};
                
				\coordinate (x4) at (0,4);
	            \draw[fill=black] (x4) circle (5pt);
                \node at (x4)[label=left:\small $y_{3}$, tight]{};
                
                \coordinate (x1) at (0,6);
                \draw[fill=black] (x1) circle (5pt);
                \node at (x1)[label=above :\small $y_{2}$, tight]{};  
                
				\coordinate (x2) at (2,6);
                \draw[fill=black] (x2) circle (5pt);
                \node at (x2)[label=above:\small $y_{8}$, tight]{};
                
				\coordinate (x3) at (6,6);
                \draw[fill=black] (x3) circle (5pt);
                \node at (x3)[label=above :\small $x_8$, tight]{};
            
                \coordinate (x5) at (2,4);
                \draw[fill=black] (x5) circle (5pt);
                \node at (x5)[label=above left:\small $\beta_3$, tight]{};
                
                \coordinate (x6) at (4,4);
                \draw[fill=black] (x6) circle (5pt);
                \node at (x6)[label=above :\small $z_3$, tight]{};
                
                \coordinate (x8) at (2,2);
                \draw[fill=black] (x8) circle (5pt);
                \node at (x8)[label= right:\small $z_8$, tight]{};

                \draw (x1) -- (x7);
                \draw (x1) -- (x3);
                \draw (x4) -- (x6);
                \draw (x2) -- (x8);
  \draw[rounded corners] (-1.1, 7) rectangle (3.1,3.5);
  
			\end{tikzpicture}
            \caption{}
            \label{fig:63:b}
\end{subfigure}
      \caption{The symmetry between two cases in Lemma \ref{last}.}
      \label{fig:63}
  \end{figure}

Assume that neither $\{x_5,x_6,x_8,y_8\}$ nor $\{x_1,x_3,x_7,y_3\}$ is a cocircuit of $M$. Then both $\{x_4,x_6,x_7,\beta_6\}$ and $\{y_2,y_3,y_8,\beta_3\}$ are cocircuits of $M$ where neither $\beta_6$ nor $\beta_3$ is in $Z$, although $\beta_6$ and $\beta_3$ may be equal. By Lemma~\ref{new3}, $M$ has distinct triangles containing $\{x_6,\beta_6\}$ and $\{x_7,\beta_6\}$, and $M$ has distinct triangles containing $\{y_3,\beta_3\}$ and $\{y_8,\beta_3\}$.

Suppose $\beta_6 = \beta _3$. Then $M$ has $\{x_6,y_3,\beta_6\}$ and $\{x_7,y_8,\beta_6\}$ as triangles, or $M$ has $\{x_6,y_8,\beta_6\}$ and $\{x_7,y_3,\beta_6\}$ as triangles. Then $$\lambda(Z \cup \beta_6) \leq 5 + (12-4) - 12 = 1.$$ Thus $|E(M) - (Z\cup \beta_6\}| \leq 1$. If $|E(M) - (Z\cup \beta_6)| = 1$, let $\nu $ be the element in $E(M) - (Z \cup \beta_6)$. Then $\nu$ is in a triangle of $M$. But the other elements of these triangles are in $Z \cup \beta_6$ and each element of this set is already in two triangles. Thus we have a contradiction to Lemma~\ref{NoThreeTriangles}. We conclude $E(M) = Z \cup \beta_6$. Moreover, when $M$ has $\{x_6,y_8,\beta_6\}$ and $\{x_7,y_3,\beta_6\}$ as triangles, by Lemma~\ref{tiedown}, $M \cong M(K_{2,2,2})$ because $r(M) = 5$. Finally, when $M$ has $\{x_6,y_3,\beta_6\}$ and $\{x_7,y_8,\beta_6\}$ as triangles, the matroid $M / \beta_6\backslash y_3,y_8$ has rank $4$ and has six triangles as in Figure \ref{fig:k33}. Thus, by Lemma \ref{lem33}, this matroid is isomorphic to $M^*(K_{3,3})$, a contradiction. We may now assume that $\beta_6 \not= \beta_3$.

  As $\{x_4,x_6,x_7,\beta_6\}$ is a cocircuit of $M$, by Lemma~\ref{new3}, $\{x_7,z_7,\beta_6\}$ and $\{x_6,z_6,\beta_6\}$ are triangles for some elements $z_6$ and $z_7$. By Lemma~\ref{new4}, $x_1,x_4,x_5,x_6, x_7,\beta_6,z_6,$ and $z_7$ are distinct. Moreover, $M$ has a $4$-cocircuit $D^*_6$ that contains $\{x_5,x_6,z_6\}$ or $\{x_1,x_7,z_7\}$. By symmetry, as $\{y_2,y_3,y_8,\beta_3\}$ is a cocircuit, $M$ has triangles  $\{y_3,z_3,\beta_3\}$ and $\{y_8,z_8,\beta_3\}$ for some elements $z_3$ and $z_8$, where the elements $x_3,x_8,y_2,y_3,y_8,z_3,z_8,$ and $\beta_3$ are distinct. Moreover, $M$ has a $4$-cocircuit $D^*_3$ that contains $\{x_8,y_8,z_8\}$ or $\{x_3,y_3,z_3\}$.

 Suppose $\{x_5,x_6,z_6\}\subseteq D_6^*$. The triangle $\{x_2,x_5,x_8\}$ implies that $x_2$ or $x_8$ is in $D^*_6$. Now $x_2 \not = z_6$ otherwise $x_2$ is in three triangles, a contradiction to Lemma \ref{NoThreeTriangles}. If $x_2 \in D^*_6$, then $x_1$ or $x_3$ is in $D^*_6$. Thus $z_6 \in \{x_1,x_3\}$, so $z_6$ is in three triangles, a contradiction to Lemma \ref{4co}. Thus $x_2 \not\in D^*_6$, so $x_8 \in D^*_6$. Now $x_8 \not= z_6$, otherwise $x_8$ is in three triangles. By orthogonality, $y_2$ or $y_8$ is in $D^*_6$. Thus $z_6 \in \{y_2,y_8\}$. If $z_6 = y_2$, then $z_6$ is in the triangles $\{x_3,y_3,z_6\}, \{x_8,y_8,z_6\}$, and $\{x_6,z_6,\beta_6\}$, a contradiction. Thus $z_6 = y_8$, so $z_6$ is in the triangles $\{x_8,y_2,z_6\}, \{z_6,z_8,\beta_3\},$ and $\{x_6,z_6,\beta_6\}$, a contradiction. We deduce that $\{x_5,x_6,z_6\} \not\subseteq D^*_6$. By the symmetry shown in Figure \ref{fig:63}, $\{x_1,x_7,z_7\} \not\subseteq D^*_6$. We conclude that \ref{notboth} holds.

  We may now assume that $M$ has as cocircuits

  \begin{enumerate}[label=(\roman*)]
      \item both $\{x_5,x_6,x_8,y_8\}$ and $\{y_2,y_3,y_8,\beta_3\}$, or
      \item both $\{x_1,x_3,x_7,y_3\}$ and $\{x_4,x_6,x_7,\beta_6\}$.
  \end{enumerate}
  To see that these two cases are symmetric, recall that we began knowing that $M$ has $\{x_1,x_2,x_4,x_5\}$ as a $4$-cocircuit. Then we assumed, by symmetry, that $M$ has $\{x_2,x_3,x_8,y_2\}$ as a cocircuit. The two cases noted above can be represented as in Figure \ref{fig:ab}.
  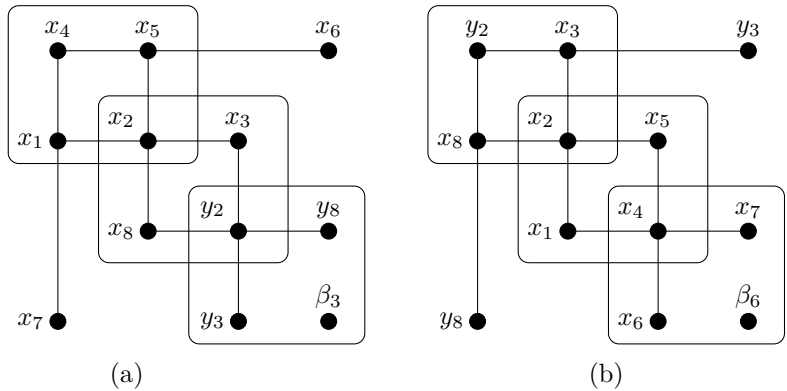
\begin{figure}[b]
      \centering
          \begin{subfigure}{.25\textwidth}
    
    \begin{tikzpicture}[tight/.style={inner sep=1pt}, loose/.style={inner sep=.7em}, scale=0.60]

            	\coordinate (x7) at (0,0);
                \draw[fill=black] (x7) circle (5pt);
                \node at (x7)[label=left:\small $x_7$, tight]{};
                
				\coordinate (x4) at (0,4);
	            \draw[fill=black] (x4) circle (5pt);
                \node at (x4)[label=left:\small $x_{1}$, tight]{};
                
                \coordinate (x1) at (0,6);
                \draw[fill=black] (x1) circle (5pt);
                \node at (x1)[label=above :\small $x_{4}$, tight]{};   
                
				\coordinate (x2) at (2,6);
                \draw[fill=black] (x2) circle (5pt);
                \node at (x2)[label=above:\small $x_{5}$, tight]{};
                
				\coordinate (x3) at (6,6);
                \draw[fill=black] (x3) circle (5pt);
                \node at (x3)[label=above :\small $x_6$, tight]{};
            
                \coordinate (x5) at (2,4);
                \draw[fill=black] (x5) circle (5pt);
                \node at (x5)[label=above left:\small $x_2$, tight]{};            
                \coordinate (x6) at (4,4);
                \draw[fill=black] (x6) circle (5pt);
                \node at (x6)[label=above :\small $x_3$, tight]{};

                \coordinate (y2) at (4,2);
                \draw[fill=black] (y2) circle (5pt);
                \node at (y2)[label= above left:\small $y_2$, tight]{};             
                
                \coordinate (x8) at (2,2);
                \draw[fill=black] (x8) circle (5pt);
                \node at (x8)[label= left:\small $x_8$, tight]{};

                \coordinate (y3) at (4,0);
                \draw[fill=black] (y3) circle (5pt);
                \node at (y3)[label= left:\small $y_3$, tight]{};

                \coordinate (y8) at (6,2);
                \draw[fill=black] (y8) circle (5pt);
                \node at (y8)[label= above:\small $y_8$, tight]{};

                 \coordinate (b8) at (6,0);
                \draw[fill=black] (b8) circle (5pt);
                \node at (b8)[label= above:\small $\beta_3$, tight]{};               

                \draw (x1) -- (x7);
                \draw (x1) -- (x3);
                \draw (x4) -- (x6);
                \draw (x2) -- (x8);
                \draw (x6) -- (y3);
                \draw (x8) -- (y8);
  \draw[rounded corners] (-1.1, 7) rectangle (3.1,3.5);
  \draw[rounded corners] (.9,5) rectangle (5.1,1.3);
  \draw[rounded corners] (2.9,3) rectangle (6.8,-0.5);
			\end{tikzpicture}
            \caption{}
            \label{fig:ab:a}
\end{subfigure}%
\hspace{1.5cm}
\begin{subfigure}{.5\textwidth}
  \centering
    \begin{tikzpicture}[scale=0.60, tight/.style={inner sep=1pt}, loose/.style={inner sep=.7em}]

            	\coordinate (x7) at (0,0);
                \draw[fill=black] (x7) circle (5pt);
                \node at (x7)[label=left:\small $y_8$, tight]{};
                
				\coordinate (x4) at (0,4);
	            \draw[fill=black] (x4) circle (5pt);
                \node at (x4)[label=left:\small $x_{8}$, tight]{};
                
                \coordinate (x1) at (0,6);
                \draw[fill=black] (x1) circle (5pt);
                \node at (x1)[label=above :\small $y_{2}$, tight]{};   
                
				\coordinate (x2) at (2,6);
                \draw[fill=black] (x2) circle (5pt);
                \node at (x2)[label=above:\small $x_{3}$, tight]{};
                
				\coordinate (x3) at (6,6);
                \draw[fill=black] (x3) circle (5pt);
                \node at (x3)[label=above :\small $y_3$, tight]{};
            
                \coordinate (x5) at (2,4);
                \draw[fill=black] (x5) circle (5pt);
                \node at (x5)[label=above left:\small $x_2$, tight]{};            
                \coordinate (x6) at (4,4);
                \draw[fill=black] (x6) circle (5pt);
                \node at (x6)[label=above :\small $x_5$, tight]{};

                \coordinate (y2) at (4,2);
                \draw[fill=black] (y2) circle (5pt);
                \node at (y2)[label= above left:\small $x_4$, tight]{};             
                
                \coordinate (x8) at (2,2);
                \draw[fill=black] (x8) circle (5pt);
                \node at (x8)[label= left:\small $x_1$, tight]{};

                \coordinate (y3) at (4,0);
                \draw[fill=black] (y3) circle (5pt);
                \node at (y3)[label= left:\small $x_6$, tight]{};

                \coordinate (y8) at (6,2);
                \draw[fill=black] (y8) circle (5pt);
                \node at (y8)[label= above:\small $x_7$, tight]{};

                \coordinate (b6) at (6,0);
                \draw[fill=black] (b6) circle (5pt);
                \node at (b6)[label= above:\small $\beta_6$, tight]{};    

                \draw (x1) -- (x7);
                \draw (x1) -- (x3);
                \draw (x4) -- (x6);
                \draw (x2) -- (x8);
                \draw (x6) -- (y3);
                \draw (x8) -- (y8);
  \draw[rounded corners] (-1.1, 7) rectangle (3.1,3.5);
  \draw[rounded corners] (.9,5) rectangle (5.1,1.3);
    \draw[rounded corners] (2.9,3) rectangle (6.8,-0.5);
			\end{tikzpicture}
            \caption{}
            \label{fig:ab:b}
\end{subfigure}
      \caption{The cocircuits include the ringed sets along with $\{x_5,x_6,x_8,y_8\}$ or  $\{x_1,x_3,x_7,y_3\}$, respectively.}
      \label{fig:ab}
  \end{figure}

By symmetry, we may assume that Figure \ref{fig:ab}(a) holds. By \ref{diff}, \linebreak$x_1,x_2,\dots,x_8,y_2,y_3,$ and $y_8$ are distinct. Moreover, we noted when $\beta_3$ was introduced that  it is not equal to any of these eleven elements. By Lemmas \ref{new3} and \ref{new4}, $M$ has triangles $\{y_3,z_3,\beta_3\}$ and $\{y_8,z_8,\beta_3\}$ for some elements $z_3$ and $z_8$, where $x_3,x_8,y_2,y_3,y_8,z_3,z_8,$ and $\beta_3$ are distinct. Moreover, $M$ has a $4$-cocircuit $D^*$ that contains $\{x_8,y_8,z_8\}$ or $\{x_3,y_3,z_3\}$. If $\{x_8,y_8,z_8\} \subseteq D^*$, then $x_2$ or $x_5$ is in $D^*$. If $x_2 \in D^*$, then $z_8 \in \{x_1,x_3\}$, so $z_8$ is in at least three triangles, a contradiction to Lemma \ref{NoThreeTriangles}. Thus $x_5 \in D^*$, so $z_8 \in \{x_4,x_6\}$. To avoid having $z_8$ in more than two triangles, we must have that $z_8 = x_6$, so $D^* = \{x_5,x_6,x_8,y_8\}$. Similarly, if $\{x_3,y_3,z_3\} \subseteq D^*$, then $z_3 = x_7$ and  $D^*=\{x_1,x_3,x_7,y_3\}$. 

Recall that $Z = \{x_1,x_2,\dots,x_8,y_2,y_3,y_8\}$ and $\beta_3 $ is not in $ Z$. Assume that $\{x_8,y_8,z_8\}\ \subseteq D^*.$ Then $z_8 = x_6$ and $D^* = \{x_5,x_6,x_8,y_8\}$. Thus $\lambda((Z - x_7) \cup\beta_3) \leq 5 + (11-4) - 11 = 1$. Therefore $|E(M) - ((Z - x_7) \cup \beta_3)| \leq 1$. Thus $E(M) = Z \cup \beta_3$, so $z_3 \in Z \cup \beta_3$. As each element of $Z \cup \beta_3$ except $x_7$ is in two triangles, we deduce that $z_3 = x_7$. Then, by Lemma~\ref{tiedown}, we get the contradiction that $M \cong M(K_{2,2,2})$ when $x_6 = z_8$. We may now assume that $\{x_3,y_3,z_3\} \subseteq D^*$. Then $z_3 = x_7$ and $D^* = \{x_1,x_3,x_7,y_3\}$. Thus $\lambda((Z - x_6) \cup \beta_3) \leq 1$, so $E(M) = Z \cup \beta_3$ and $z_8 \in Z \cup \beta_3$. By symmetry with the previous case, $z_8 = x_6$ and so $M \cong M(K_{2,2,2})$ a contradiction.
\end{proof}

\section{Proof of the Main Theorem}\label{Sec:MainProof}
In this section, we prove Theorem
\ref{maintheorem:ternary} and then use that to prove Theorem~\ref{mainTheorem}. 

\begin{proof}[Proof of Theorem \ref{maintheorem:ternary}]
    Assume that every cocircuit of $M$ has size at least four, but $M$ does not have $F_7^-,P_7,  M^*(K_{3,3}),Q_9, M(K_5), M(K_{2,2,2}),$ or $H_{12}$ as a minor. If $M$ is not $3$-connected, then, by Lemma \ref{MainTheorem:2conn}, $M \cong H_{12}$. Thus we may assume that $M$ is $3$-connected. By Lemmas \ref{4CoCctNoTriangle} and \ref{4CocctInd}, we may assume that every $4$-cocircuit of $M$ is independent. By Lemma \ref{NoThreeTriangles}, every element of $M$ is in at most two triangles. If every element of $M$ is in at most one triangle, then, as every element of $M$ is in a triangle, $E(M)$ is a disjoint union of triangles. This contradicts  Lemmas \ref{3tri} and \ref{ring}.
    
    We now know that $M$ has an element $x_1$ that is in two triangles $\{x_1,x_2,x_3\}$ and $\{x_1,x_4,x_7\}$. By Lemma \ref{minCond}, there is a $4$-cocircuit $C^*$ containing $x_1$. As $C^*$ is independent, we may assume that  $C^* = \{x_1,x_2,x_4,x_5\}$ for a new element $x_5$. By Lemma \ref{no222}, there is not a triangle containing $\{x_2,x_4\}$. Now, by Lemma \ref{minCond}, $x_5$ is in a triangle.  Then either there is a triangle containing $\{x_2,x_5\}$ but not $\{x_4,x_5\}$, a triangle containing $\{x_4,x_5\}$ but not $\{x_2,x_5\}$, or there are two triangles, one containing $\{x_2,x_5\}$ and the other containing $\{x_4,x_5\}$. The first two cases are symmetric, so we may assume there is a triangle containing $\{x_2,x_5\}$ but not $\{x_4,x_5\}$. Then, by Lemma~\ref{new3}, $M$ is isomorphic to $P_7, M^*(K_{3,3}), Q_9$, or $M(K_{2,2,2})$, a contradiction. Therefore there is a triangle containing $\{x_2,x_5\}$ and a triangle containing $\{x_4,x_5\}$. As $M$ has no $4$-point lines, these two triangles are distinct. Therefore, by Lemma \ref{last}, $M$ is isomorphic to $P_7, M^*(K_{3,3}), Q_9, M(K_5), M(K_{2,2,2}),$ or $H_{12}$, a contradiction. This contradiction completes the theorem.
\end{proof}

    \begin{proof}[Proof of Theorem \ref{mainTheorem}]
     Assume that every cocircuit of $M$ has size at least four, but $M$ does not have $U_{2,5}, F_7,F_7^-, P_7,M^*(K_{3,3}), Q_9, M(K_5), M(K_{2,2,2})$, or $H_{12}$ as a minor. We first assume that $M$ is not $3$-connected. Then, by Lemma~\ref{2Sum2Mat}, $M = M_1 \oplus_2 M_2$ for some $3$-connected matroids $M_1$ and $M_2$ each of which has rank at least three. Assume that some $M_i$ is not ternary. If $r^*(M_i) = 2$, then $M$ has a cocircuit of size less than four, a contradiction. Thus we may assume that the rank and corank of each $M_i$ are at least three. Hence, by Theorem~\ref{ternarySpliiter}, as $M$ does not have $U_{2,5}$ or $F_7$ as a minor, $M_i \cong F_7^*$. Then $M$ has a triad, a contradiction. We deduce that both $M_1$ and $M_2$ are ternary. Thus, by Lemma~\ref{MainTheorem:2conn}, $M$ has an $H_{12}$-minor, a contradiction.
     
     We now know that $M$ is $3$-connected. Then, by Theorem~\ref{ternarySpliiter}, as $M$ does not have $F_7$ or $U_{2,5}$ as a minor and  $M \not\cong F_7^*$, we deduce that $M$ is ternary. Therefore, by Theorem \ref{maintheorem:ternary}, the theorem holds.
    \end{proof}

\end{document}